\theoremstyle{plain}
 \newtheorem{thm}{Theorem}[section]
 \newtheorem{lem}{Lemma}[section]
 \newtheorem{cor}{Corollary}[section]
\theoremstyle{definition}
 \newtheorem{dfn}{Definition}[section]
\theoremstyle{remark}
 \newtheorem{rem}{Remark}[section] 
 \numberwithin{equation}{section}
\renewcommand{\leq}{\leqslant}
\renewcommand{\geq}{\geqslant}
\title[products of two Cantor sets]{PRODUCTS OF TWO CANTOR SETS}
\author[Y.\ Takahashi]{\bfseries YUKI TAKAHASHI}
\address{Department of Mathematics, University of California, Irvine, CA~92697, USA}
\email{takahasy@math.uci.edu}
\thanks{Y.\ T. \ was supported in part by NSF grant DMS-1301515 (PI: A.\ Gorodetski).}
\date{today}
\begin{document}

\vspace{18mm}
\setcounter{page}{1}
\thispagestyle{empty}

\begin{abstract}
We consider products of two Cantor sets, and obtain the optimal
estimates in terms of their thickness that guarantee that their product
is an interval. This problem is motivated by the fact that the spectrum of the Labyrinth model, which is a 
two dimensional quasicrystal model, is given by a product of two Cantor sets. We also discuss the connection 
with the question on the structure of intersections of two Cantor sets which was considered by many authors previously.
\end{abstract}
\maketitle

\section{Introduction}  
\subsection{Sums and products of Cantor sets}

Sums of Cantor sets have been considered in many papers and in many different settings 
(e.g., \cite{Anisca}, \cite{BMPM}, \cite{DGsurvey}, \cite{DG11}, \cite{DG13}, \cite{Droglu}, \cite{Hall}, \cite{Hochman}, \cite{Hochman2}, \cite{Moreira}, \cite{Moreira2}, \cite{Moreira3}, \cite{PalisTakens}, \cite{Solomyak}, \cite{Solomyak1997}). 
It arises naturally in dynamical systems in the study of homoclinic bifurcations \cite{PalisTakens}.  
It also arises in number theory in connection with continued fractions as initiated by Hall \cite{Hall}. 
In \cite{Hall}, the author proved that any real number can be 
written as a sum of two real numbers whose continued fractional coefficients are at most $4$. 
It is also connected to spectral theory (e.g., \cite{DGsurvey}, \cite{DG11}, \cite{DG13}, \cite{Lifshitz}). The spectra of certain types of 
two dimensional quasicrystal models can be written as sums of two dynamically defined Cantor sets \cite{DG11}. 
The study of sums of Cantor sets also has natural connection to the study of intersections of Cantor sets 
(e.g., \cite{Honary}, \cite{Hunt}, \cite{Kraft}, \cite{Kraft3}, \cite{Moreira0}, \cite{Newhouse}, \cite{Williams}). 

In \cite{Newhouse}, Newhouse proved the following so-called Gap Lemma 
(for the definition of thickness $\tau(\cdot)$, see section \ref{765}):

\begin{lem}[Gap Lemma]
Let $K$, $L$ be Cantor sets with $\tau(K) \cdot \tau(L) > 1$. Then, 
if neither $K$ nor $L$ lies in a complementary domain of the other, $K \cap L$ contains at least one element. 
\end{lem}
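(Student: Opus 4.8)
The plan is to argue by contradiction: assume $K\cap L=\emptyset$ and show that the hypothesis forces a point in the intersection after all. Write $U=(a,b)$ for a bounded gap of $K$ and $V=(c,d)$ for a bounded gap of $L$, and call such a pair \emph{linked} if they overlap with neither containing the other, say $c<a<d<b$ (the reverse orientation is symmetric). For a gap $U$ with endpoint $u$, recall that its bridge is the maximal interval abutting $u$ on the side of $K$ and containing no gap of $K$ at least as long as $U$; by definition its length is $\ge\tau(K)\,|U|$, and every gap of $K$ strictly inside it is shorter than $U$.

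First I would carry out a reduction producing an initial linked pair. Using that $K$ and $L$ are disjoint and that neither lies in a complementary domain of the other, one checks that $L$ meets the interior of $\mathrm{conv}(K)$; since $K\cap L=\emptyset$, such a point lies in a bounded gap $U_0$ of $K$. Inspecting the smallest and largest points of $L$ inside $U_0$ together with the adjacent gaps of $L$, a short case analysis --- in which the excluded case would force $\mathrm{conv}(L)\subseteq U_0$, contradicting that $L$ is not contained in a complementary domain of $K$ --- yields a bounded gap $V_0$ of $L$ linked with $U_0$.

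The crux is the following continuation step, and this is where $\tau(K)\cdot\tau(L)>1$ enters. Given a linked pair $U=(a,b)$, $V=(c,d)$ with $c<a<d<b$ and $K\cap L=\emptyset$, note that $a\in K$ lies in the gap $V$ while $d\in L$ lies in the gap $U$. Consider the bridge $[p,a]$ of $K$ at $a$ and the bridge $[d,q]$ of $L$ at $d$. On the left: if $c\ge p$, then $c\in L$ lies in the bridge, so (being outside $K$) it sits in a gap $U^{*}\subsetneq[p,a]$ with $|U^{*}|<|U|$, and $U^{*}$ is again linked with $V$; otherwise $c<p$ forces $|V|=d-c>a-p\ge\tau(K)\,|U|$. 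On the right: if $b\le q$, then $b\in K$ lies in a gap $V^{*}\subset[d,q]$ with $|V^{*}|<|V|$, linked with $U$; otherwise $b>q$ forces $|U|=b-a>q-d\ge\tau(L)\,|V|$. If both ``otherwise'' cases held at once, multiplying $|V|>\tau(K)\,|U|$ by $|U|>\tau(L)\,|V|$ gives $\tau(K)\tau(L)<1$, contradicting the hypothesis. Hence at least one side succeeds, producing a new linked pair in which exactly one of the two gaps is strictly shorter than before and the other is unchanged.

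Finally I would run the limiting argument. Iterating the continuation step generates linked pairs $(U_n,V_n)$; both sequences $|U_n|$ and $|V_n|$ are nonincreasing and, since one strictly decreases at each step, at least one of them takes infinitely many values. As the gap lengths of a Cantor set have finite total length, they accumulate only at $0$, so a strictly decreasing sequence of them tends to $0$; hence $\min(|U_n|,|V_n|)\to0$. The overlap of $U_n$ and $V_n$ has one endpoint in $K$ and one in $L$ and length less than $\min(|U_n|,|V_n|)$, so these paired endpoints satisfy $|x_n-y_n|\to0$ with $x_n\in K$ and $y_n\in L$, all lying in a fixed bounded interval. Passing to a convergent subsequence $x_{n_k}\to z$ gives $y_{n_k}\to z$ as well, and since $K$ and $L$ are closed, $z\in K\cap L$ --- contradicting disjointness. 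I expect the continuation step to be the main obstacle, since combining the two bridge inequalities into $\tau(K)\tau(L)<1$ is precisely the heart of the lemma; the reduction to an initial linked pair and the final compactness argument are comparatively routine once the bookkeeping of orientations is set up carefully.
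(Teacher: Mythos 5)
Your argument is correct: it is the standard Newhouse/Palis--Takens proof of the Gap Lemma via linked pairs of gaps, bridges, and the recursive shrinking step, which is exactly the proof the paper is implicitly invoking (the paper only cites \cite{Newhouse} and gives no proof of its own). All three stages check out --- the reduction to an initial linked pair, the dichotomy whose failure on both sides would force $\tau(K)\tau(L)<1$, and the compactness argument from $\min(|U_n|,|V_n|)\to 0$ --- and, as your strict inequalities show, the same argument even yields the strengthening to $\tau(K)\cdot\tau(L)\geq 1$ mentioned in the paper.
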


In fact, if Newhouse's proof is slightly altered the condition $\tau(K) \cdot \tau(L) > 1$ may be replaced with $\tau(K) \cdot \tau(L) \geq 1$. 
The following is a direct consequence of Gap Lemma:

\begin{thm}\label{sum_sum_sum}
Suppose $K$ and $L$ are Cantor sets with $\tau(K) \cdot \tau(L) \geq 1$. Assume also that the size of the largest 
gap of $K$ is not greater than the diameter of $L$, and the size of the largest gap of $L$ is not greater than the 
diameter of $K$. Then $K + L$ is a closed interval. 
\end{thm}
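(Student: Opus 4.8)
The plan is to reduce the statement to the Gap Lemma by rewriting the sumset as a family of intersections. Observe that a real number $t$ belongs to $K+L$ if and only if there are $k\in K$ and $\ell\in L$ with $k=t-\ell$; equivalently, $K\cap(t-L)\neq\emptyset$, where $t-L=\{t-\ell:\ell\in L\}$ is a reflected, translated copy of $L$. Writing $[a,b]$ for the convex hull of $K$ and $[c,d]$ for that of $L$, one has $K+L\subseteq[a+c,b+d]$ trivially, and the extreme points $a+c$ and $b+d$ clearly lie in $K+L$. So it suffices to show that every $t\in[a+c,b+d]$ satisfies $K\cap(t-L)\neq\emptyset$; this yields $K+L=[a+c,b+d]$, a closed interval.

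Next I would record that thickness is invariant under reflection and translation, so $\tau(t-L)=\tau(L)$ and hence $\tau(K)\cdot\tau(t-L)=\tau(K)\cdot\tau(L)\geq 1$ for every $t$. By the (non-strict) form of the Gap Lemma quoted above, $K\cap(t-L)\neq\emptyset$ as soon as neither $K$ nor $t-L$ lies in a complementary domain of the other. Thus the entire problem reduces to verifying this last condition for all $t$ in the range $[a+c,b+d]$.

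The verification splits according to the type of complementary domain. The convex hull of $t-L$ is $[t-d,\,t-c]$, of diameter $d-c=\operatorname{diam}L$. For $t\in[a+c,b+d]$ the two convex hulls $[a,b]$ and $[t-d,t-c]$ overlap, so neither Cantor set can sit inside an \emph{unbounded} complementary domain of the other. For the bounded gaps: if $K$ were to lie in a bounded gap $(p,q)$ of $t-L$, then, since $a,b\in K$, we would need $p<a$ and $b<q$, forcing $q-p>b-a=\operatorname{diam}K$; but every gap of $t-L$ has length at most the largest gap of $L$, which by hypothesis is $\leq\operatorname{diam}K$, a contradiction. The symmetric argument, using that the largest gap of $K$ is $\leq\operatorname{diam}L$, rules out $t-L$ lying in a bounded gap of $K$. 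Hence neither lies in any complementary domain of the other, and the Gap Lemma applies.

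The step I expect to require the most care is the bounded-gap argument in the borderline case, where the relevant inequalities are only non-strict: here it is essential that containment in an \emph{open} gap forces a \emph{strict} inequality on the gap length (because the extreme points $a,b$ of $K$ themselves belong to $K$), which is precisely what makes the non-strict hypotheses on the largest gaps sufficient. One should also be careful to treat the unbounded and bounded complementary domains separately, since it is exactly the convex-hull overlap — valid only on $[a+c,b+d]$ — that disposes of the unbounded case.
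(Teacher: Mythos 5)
Your proposal is correct and follows essentially the same route as the paper: the paper proves the companion Theorem \ref{sum_of_Cantor_set} by exactly this reduction, writing $x \in K+L \iff K \cap (x-L) \neq \emptyset$ and invoking the (non-strict) Gap Lemma, and remarks that Theorem \ref{sum_sum_sum} follows similarly. Your bounded-gap argument correctly supplies the detail the paper leaves implicit, namely that the hypotheses on the largest gaps rule out either set being contained in a finite gap of the other.
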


Using Theorem \ref{sum_sum_sum} as a tool, 
we consider products of two Cantor sets. This problem arises naturally in the study of  the 
spectrum of the Labyrinth model \cite{Takahashi}. 
For any two Cantor sets $K, L > 0$, we have 
\begin{equation*}
K \cdot L = \exp \left( \log K + \log L \right).
\end{equation*} 
Using this equality, if $K$ and $L$ do not contain $0$, some results of products of Cantor sets can be immediately obtained by 
that of sums of Cantor sets. For example, in \cite{Astels}, the authors obtained an estimate for products of two or more Cantor sets 
to be an interval. 

The main difficulty arises when $K$ or $L$ contain $0$. 
To the best of our knowledge, this case has never been discussed before. 
If $K$ contains $0$, 
then $\log K$ is ``stretched to negative infinity", making products of Cantor sets different from 
sums of Cantor sets. 
Indeed, for example, in section \ref{765} we will show that under the condition of $\tau(K) \cdot \tau(L) > 1$,
 products of two Cantor sets 
$K \cdot L$ may contain countably many disjoint closed intervals. 
This is a phenomena which never appears in sums of two Cantor sets under the condition of 
$\tau(K) \cdot \tau(L) \geq 1$. 

Initially this work was motivated by the question on spectral properties of the Labyrinth model \cite{Takahashi}. 
As mentioned above, the spectrum of the Labyrinth model is a product of two Cantor sets, and in fact, these two Cantor sets both 
contain the origin. Using our results, we can show that the spectrum of the Labyrinth model is an interval   
for the small coupling constant regime. See \cite{Takahashi}.

\subsection{Main results}
For any Cantor set $K$, we denote $K \cap ( 0, \infty )$ by $K_+$, and 
$-\left( K \cap ( -\infty, 0 ) \right)$ by $K_-$. Let us give the following definition:
\begin{dfn}
Let $K$ be a Cantor set. We call $K$ a 
\begin{itemize}
\item[(1)] \emph{$0$-Cantor set} if $K_{+}, K_{-} \neq \phi$, $\inf K_+ = 0$, and $\inf K_- = 0$; 
\item[(2)] \emph{$0^{+}$-Cantor set} if $\min K = 0$; 
\item[(3)]  \emph{$0^{\times}$-Cantor set} if $K_{+}, K_{-} \neq \phi$, and $0 \notin K$; 
\item[(4)] \emph{$0^+_-$-Cantor set} if $K_{+}, K_{-} \neq \phi$, $\inf K_+ = 0$, and $\inf K_- > 0$. 
\end{itemize}
\end{dfn}
Our main results are the following:
\begin{thm}\label{kamidaro}
Let $K, L$ be $0^+$-Cantor sets. 
Then, $K \cdot L$ is an interval if 
\begin{equation}\label{465}
\tau(L) \geq \frac{2 \tau(K) + 1 }{ \tau(K)^2 }, \text{ or } \ \tau(K) \geq \frac{2 \tau(L) + 1 }{ \tau(L)^2 }.
\end{equation}
In particular, if 
\begin{equation*}
\tau(K) = \tau(L) \geq \frac{1 + \sqrt{5}}{2}, 
\end{equation*}
then $K \cdot L$ is an interval. 
Furthermore, let $M, N > 0$ be real numbers with 
\begin{equation}\label{thm0}
N < \frac{2M + 1}{M^2}, \text{ \ and \ } M < \frac{2N + 1}{N^2}.  
\end{equation}
Then 
\begin{itemize}
\item[(1)] there exist $0^+$-Cantor sets $K, L,$ such that $\tau(K) = M$, $\tau(L) = N$,  
and $K \cdot L$ is a disjoint union of $\{0\}$ and countably many closed intervals; 
\item[(2)] for any $k \geq 2$, there exist $0^+$-Cantor sets $K, L,$ such that $\tau(K) = M$, $\tau(L) = N$, and 
$K \cdot L$ is a disjoint union of $k$ closed intervals. 
\end{itemize}
\end{thm}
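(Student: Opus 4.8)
The plan is to treat the two directions of Theorem \ref{kamidaro} separately, reducing the positive statement to the analysis of a single extremal gap of the product, and the sharpness statement to an explicit self-similar construction.

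\emph{Positive direction.} Since both the thickness and the property of being an interval are invariant under the scalings $K\mapsto K/\max K$ and $L\mapsto L/\max L$, I would first normalize $\max K=\max L=1$, so that $K,L\subset[0,1]$ with $0,1\in K\cap L$ and $K\cdot L\subset[0,1]$ is compact and contains $0$ and $1$. Thus it suffices to show $K\cdot L$ has no gap inside $(0,1)$. Suppose $(t_1,t_2)$ were a maximal such gap, $0<t_1<t_2<1$. A point $t\in(t_1,t_2)$ lies in $K\cdot L$ exactly when the fibre interval $(t_1/x,t_2/x)$ meets $L$ for some $x\in K\cap[t_1,1]$; so the gap hypothesis says $(t_1/x,t_2/x)\cap L=\emptyset$ for every such $x$. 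Evaluating at the slices $x=1$ and, by the symmetric argument, $y=1$, shows that $(t_1,t_2)$ is contained simultaneously in a gap of $L$ and in a gap of $K$.

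I would then track the fibre interval $(t_1/x,t_2/x)$ as $x$ decreases from $1$. Each time its upper endpoint reaches the right end of the current gap of $L$, an endpoint of $L$ re-enters the fibre unless a gap of $K$ intervenes in the variable $x$; iterating, this forces an alternating cascade of gaps of $K$ and of $L$ whose positions and lengths are rigidly linked through the hyperbolae $xy=\mathrm{const}$. Bounding every bridge below by $\tau(\cdot)$ times its adjacent gap at each step converts this linkage into a product inequality relating $\tau(K)$ and $\tau(L)$; the tightest (self-similar) cascade saturates it, and the resulting threshold is exactly the negation of \eqref{465}. Hence \eqref{465} makes the cascade impossible, so no product gap exists. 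Specializing to $\tau(K)=\tau(L)=\tau$, condition \eqref{465} reads $\tau^3\geq 2\tau+1$, i.e. $(\tau+1)(\tau^2-\tau-1)\geq 0$, i.e. $\tau\geq\frac{1+\sqrt{5}}{2}$, which gives the golden-ratio corollary. \emph{The main obstacle here is setting up and solving the cascade cleanly}: identifying the extremal alignment of gaps and extracting the sharp algebraic threshold \eqref{465} rather than a merely sufficient one, while handling the accumulation of the cascade toward $0$ via compactness.

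\emph{Sharpness.} To realize \eqref{thm0} I would build the extremal cascade explicitly. Take $K,L$ to be the self-similar Cantor sets generated by the contraction pairs $x\mapsto\lambda_K x,\ x\mapsto\lambda_K x+(1-\lambda_K)$ and likewise for $L$, with $\lambda_K=\frac{M}{2M+1}$ and $\lambda_L=\frac{N}{2N+1}$ chosen so that $\tau(K)=M$ and $\tau(L)=N$ exactly (the single gap ratio is the infimum defining the thickness). Splitting $K\cdot L$ according to the four corner rectangles built from the lower pieces $[0,\lambda]$ and upper pieces $[1-\lambda,1]$, the only values that can be missed lie between $\max(\lambda_K,\lambda_L)$ and the bottom of the top-right product $(1-\lambda_K+\lambda_K K)(1-\lambda_L+\lambda_L L)$. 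Renormalizing repeatedly into this top-right corner and optimizing the level at which an uncovered interval first opens produces a genuine gap of $K\cdot L$ precisely under \eqref{thm0}; this is exactly where the product departs from the sum, since the corner copies are affinely shifted away from $0$ and the multiplicative cross terms destroy self-similarity. For part (2) I would run this branching only through the first $k$ levels of the corner recursion and, below a fixed scale, replace $K,L$ by Cantor sets thick enough that every remaining sub-product is a full interval — admissible because the thickness is an infimum already attained at the coarse levels — so that exactly $k$ components survive. For part (1) I would iterate the branching at every level, producing one gap per level; since these gaps accumulate only at $0$, the product $K\cdot L$ is the union of $\{0\}$ with countably many closed intervals. \emph{The main obstacle on this side is the bookkeeping}: achieving the prescribed thicknesses $M,N$ and the prescribed component count simultaneously, and verifying that the engineered product gaps are genuine while the complementary pieces are filled exactly.
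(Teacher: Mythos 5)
There are genuine gaps on both sides of your argument. In the positive direction, your cascade strategy (tracking the fibre intervals $(t_1/x,t_2/x)$ through alternating gaps of $K$ and $L$) is a plausible multiplicative analogue of the Gap Lemma, but the entire quantitative content of the theorem --- why the sharp threshold is the asymmetric condition (\ref{465}) rather than some symmetric condition in $\tau(K)\tau(L)$ --- is asserted, not derived; you yourself flag ``setting up and solving the cascade cleanly'' as the main obstacle. The paper resolves exactly this obstacle by a different route: since $\min K=\min L=0$, the sets $\widetilde{K_+}$ and $\widetilde{L_+}$ are extended Cantor sets, and the whole issue is how much the logarithm degrades thickness for gaps close to $0$. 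This is quantified via $C$-nice gaps ($U^L/|U|\geq C$), the monotonicity of $x\mapsto \log\bigl(1+\tfrac{kx}{1+x}\bigr)/\log(1+x)$ (Lemma \ref{aaaa}), and the resulting distortion bound (Lemmas \ref{gegege}, \ref{akb}, \ref{nmb}); the specific choice $C=\tau(K)\tau(L)-1$ makes $1+\tau(L)/(1+C)=1+1/\tau(K)$ an identity and makes $1+1/C\leq 1+\tau(K)/(1+\tau(K))$ exactly equivalent to (\ref{465}), after which Theorem \ref{sum_sum_sum} applies. Without an equivalent of this computation your cascade does not produce the stated threshold, only the shape of an argument that might.

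The sharpness construction has a more concrete flaw. You take both $K$ and $L$ to be standard middle-$\alpha$ Cantor sets with ratios $M/(2M+1)$ and $N/(2N+1)$. In log coordinates these are unions of translates of fixed blocks by the semigroups $\{-ma\}$ and $\{-nb\}$ with $a=\log\frac{2M+1}{M}$, $b=\log\frac{2N+1}{N}$. For generic $M\neq N$ the periods $a,b$ are incommensurable, so $\{ma+nb:m,n\geq 0\}$ becomes arbitrarily dense near $+\infty$ and the sumset $\widetilde{K_+}+\widetilde{L_+}$ covers a half-line near $-\infty$; that is, $K\cdot L$ contains an interval $[0,\epsilon]$, which is the opposite of what part (1) requires. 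The paper's construction is built precisely to avoid this resonance problem: $K$ is the middle Cantor set, but $L$ is assembled from a block $L_0$ of \emph{very large} thickness rescaled by $C/(1+C+N)$ with $C=M(1+N)/(1+M)$, chosen so that the two log-periods coincide ($\frac{1+2M}{M}=\frac{1+C+N}{C}$) while still giving $\tau(L)=N$; one then checks $|\widetilde{K_0}|+|\widetilde{L_0}|<\log\frac{1+2M}{M}$ to get disjoint consecutive translates, and the large thickness of $L_0$ is what guarantees each block sum $\widetilde{K_0}+\widetilde{L_0}$ is a single interval (with $\tau(L_0)=N$ as in your construction even this can fail). Your device for part (2) --- replacing the tail near $0$ by a sufficiently thick Cantor set so that exactly $k$ components survive --- does match the paper's, but it rests on the broken part (1) construction.
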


Similarly, we have 
\begin{thm}\label{goddaro}
Let $K$ be a $0^+$-Cantor set and let $L$ be a $0$-Cantor set. 
Then, $K \cdot L$ is an interval if 
\begin{equation}\label{3654}
\tau(L) \geq \frac{2 \tau(K) + 1}{\tau(K)^{2}}.
\end{equation}
Furthermore, let $M, N > 0$ be real numbers with 
\begin{equation}\label{thm2}
N < \frac{2M + 1}{M^2}. 
\end{equation}
Then
\begin{itemize}
\item[(1)] there exists a $0^+$-Cantor set $K$ and a $0$-Cantor set $L$ such that $\tau(K) = M$, $\tau(L) = N$, and 
$K \cdot L$ is a disjoint union of countably many closed intervals; 
\item[(2)] for any $k \geq 2$, 
there exists a $0^+$-Cantor set $K$ and a $0$-Cantor set $L$ such that $\tau(K) = M$, $\tau(L) = N$, 
and $K \cdot L$ is a disjoint union of $k$ closed intervals. 
\end{itemize}
\end{thm}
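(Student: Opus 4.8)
The plan is to separate the sufficiency from the sharpness, to reduce the sufficiency to a one‑sided product, and to read the threshold $\frac{2\tau(K)+1}{\tau(K)^2}$ off a single distortion estimate. First I would decompose along the sign of the factor coming from $L$. Since $K$ is a $0^+$‑Cantor set we have $0\in K$ and $K\subseteq[0,\infty)$, while $L=(-L_-)\cup\{0\}\cup L_+$, so
\begin{equation*}
K\cdot L=\{0\}\cup\bigl(K_+\cdot L_+\bigr)\cup\bigl(-(K_+\cdot L_-)\bigr),
\end{equation*}
with $K_+\cdot L_+\subseteq(0,\infty)$ and $-(K_+\cdot L_-)\subseteq(-\infty,0)$. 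Because $\inf K_+=\inf L_\pm=0$, each one‑sided product has $0$ in its closure, so it suffices to prove $K_+\cdot L_+\cup\{0\}=[0,\max K\cdot\max L_+]$ and $K_+\cdot L_-\cup\{0\}=[0,\max K\cdot\max L_-]$; gluing these two intervals at the origin then shows $K\cdot L=[-\max K\cdot\max L_-,\ \max K\cdot\max L_+]$. This reduces the whole ``if'' part to the one‑sided statement that the $0^+$‑Cantor set $K$ times the positive (resp. negative) part of $L$ fills an interval down to $0$.

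For the one‑sided statement I would use the hyperbola criterion: for $t>0$ one has $t\in K_+\cdot L_+$ exactly when $K_+\cap(t/L_+)\neq\phi$, where $t/L_+=\{t/y:y\in L_+\}$. The strategy is to apply the Gap Lemma to $K$ and the Cantor set $t/L_+$ for every admissible $t$ and then take the union over $t$. The main obstacle is that the reciprocal map $y\mapsto t/y$ is not affine, so it distorts thickness, and one must control $\tau(t/L_+)$ uniformly in $t$ and uniformly across all scales accumulating at $0$. A direct computation on a single bridge–gap–bridge block $u_0<u_1<u_2<u_3$ of $L_+$ shows that under $y\mapsto t/y$ the two thickness ratios get multiplied by the position factors $u_1/u_3<1$ and $u_2/u_0>1$; bounding the shrinking factor $u_1/u_3$ from below in terms of $\tau(L)$ is exactly where $\frac{2\tau(K)+1}{\tau(K)^2}$ is forced, the worst block giving $\tau(K)\cdot\tau(t/L_+)\geq 1$ precisely under hypothesis \eqref{3654}. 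After this, the Gap Lemma in the $\geq 1$ form noted after its statement, together with the verification that neither set lies in a complementary domain of the other, supplies the intersection. I would also record why only one inequality appears, unlike the ``or'' of Theorem \ref{kamidaro}: the two‑sided accumulation of $L$ at $0$ is compatible only with the orientation in which $\tau(K)$ is the covering thickness, so the companion inequality $\tau(K)\geq\frac{2\tau(L)+1}{\tau(L)^2}$ is unavailable. An essentially identical argument using $\log K_+$, $\log L_+$ and Theorem \ref{sum_sum_sum} applied scale by scale, then passed to the limit, gives the same threshold.

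For the sharpness statements I would construct explicit self‑similar examples. Fixing $M,N>0$ with $N<\frac{2M+1}{M^2}$, I build a $0^+$‑Cantor set $K$ of thickness exactly $M$ and a $0$‑Cantor set $L$ of thickness exactly $N$, each self‑similar under a single contraction fixing $0$, so that their bridge–gap ratios, and hence their thicknesses, are constant across the scales accumulating at the origin. Since \eqref{thm2} fails, the distortion estimate runs backwards: at each scale the image $t/L_+$ has thickness too small for the relevant hyperbola to meet $K_+$, leaving a genuine gap in $K_+\cdot L_+$ that no other pair of factors fills. By self‑similarity this gap reproduces at every scale, so $K_+\cdot L_+$ (and symmetrically $K_+\cdot L_-$) is a disjoint union of countably many closed intervals, which after assembling the positive part, the negative part and the origin gives statement (1). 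For statement (2) I would break the exact self‑similarity below a chosen scale, making $K$ (or $L$) so thick there that it contains solid intervals while keeping the overall thickness equal to $M$ (resp. $N$), the defining infimum being attained only in the finitely many top scales; then the persistent gap survives at exactly the prescribed number of scales and $K\cdot L$ becomes a disjoint union of exactly $k$ closed intervals. The delicate points are realizing the thicknesses $M,N$ on the nose and proving that the candidate gaps are not accidentally filled by cross‑scale products; I expect the latter, together with the uniform distortion bound of the second paragraph, to be the hardest part of the whole argument.
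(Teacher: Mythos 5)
Your reduction to the two one-sided products and your sharpness constructions are in the right spirit (the paper's examples are likewise self-similar, with the negative part of $L$ taken large and thick so that $\tau(L)=N$ is unaffected), but the core of your sufficiency argument has a genuine gap. You propose to apply the Gap Lemma to $K$ and $t/L_+$ after establishing a uniform lower bound for $\tau(t/L_+)$ (equivalently, for $\tau(\widetilde{L_+})$) in terms of $\tau(L)$, ``the worst block'' giving $\tau(K)\cdot\tau(t/L_+)\geq 1$ under \eqref{3654}. No such uniform bound exists when $L$ is a $0$-Cantor set: the thickness $\tau(L)$ is computed using bridges that may cross $0$ into $L_-$, so $L_+$ by itself can contain gaps $U$ with $U^{L}/|U|$ arbitrarily small, and a pair of consecutive such gaps separated by a bridge of the minimal admissible length forces $\tau(\widetilde{L_+})$ --- and hence $\tau(t/L_+)$, since $t/L_+=\exp(\log t-\widetilde{L_+})$ --- arbitrarily close to $0$ while $\tau(L)=N$ is maintained. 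Your ``shrinking factor $u_1/u_3$'' is exactly the quantity that degenerates. This is not a technicality: it is the feature that distinguishes this theorem from Theorem \ref{kamidaro} and is the reason the paper introduces $C$-nice and $C$-bad gaps at all.

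The paper's proof (a verbatim repetition of that of Theorem \ref{kamidaro}) avoids this by localizing: with $C=\tau(K)\tau(L)-1$ it uses only the topmost $C$-nice block $V$ of $L_\pm$ adjacent to the endpoint, for which Lemma \ref{akb} gives $|\widetilde{V}|>\log\bigl(1+\tfrac{1}{\tau(K)}\bigr)$ (so $\widetilde{V}$ is longer than every gap of $\widetilde{K_+}$) and $\tau(\widetilde{V})\geq \log\bigl(1+\tfrac{1}{\tau(K)}\bigr)/\log\bigl(1+\tfrac{1}{C}\bigr)$; since $K$ is a $0^+$-Cantor set, $\widetilde{K_+}$ is unbounded below with the truncated thickness bound of Lemma \ref{nmb}, so $\widetilde{K_+}+\widetilde{V}$ is already the entire half line $(-\infty,\widetilde{K_+}^R+\widetilde{L_\pm}^R]$ and the bad part of $L_\pm$ deep near $0$ is never needed. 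This localization is also what explains the asymmetry of \eqref{3654} (your heuristic for it is roughly right but not a proof). Finally, even where all gaps are nice, your single-block worst-case estimate would reproduce only the non-optimal bound recorded in the Remark after the proof of Theorem \ref{kamidaro}; the optimal constant $\tfrac{2\tau(K)+1}{\tau(K)^2}$ comes from the combination of the length estimate on $\widetilde{V}$ with the two thickness estimates at the parameters $C=\tau(K)\tau(L)-1$ and $C=\tau(K)$, not from a blockwise distortion bound.
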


We also have the following:
\begin{thm}\label{migotoda}
Let $K, L$ be $0$-Cantor sets. Then, if
\begin{equation}\label{46578}
2 \left( \tau(K) + 1 \right) \left( \tau(L) + 1 \right) \leq \left( \tau(K) \tau(L) - 1 \right)^{2},
\end{equation}
$K \cdot L$ is an interval. In particular, if 
\begin{equation*}
\tau(K) = \tau(L) \geq 1 + \sqrt{2},
\end{equation*}
then $K \cdot L$ is an interval. 
Furthermore, let $M, N > 0$ be real numbers with 
\begin{equation}\label{thm3}
2 (M + 1)(N + 1) > (MN - 1)^2. 
\end{equation}
Then, there exist $0$-Cantor sets $K, L,$ such that $\tau(K) = M$, $\tau(L) = N$, and 
$K \cdot L$ is a disjoint union of two intervals.
\end{thm}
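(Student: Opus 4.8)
The plan is to reduce the statement for two $0$-Cantor sets to a gap-filling problem for products of one-sided pieces, and to extract the inequality $2(\tau(K)+1)(\tau(L)+1)\le(\tau(K)\tau(L)-1)^2$ as the exact threshold at which every such gap can be closed. First I would split the product according to signs. Writing $K=(-K_-)\cup\{0\}\cup K_+$ and similarly for $L$, one has
\begin{equation*}
K\cdot L=\{0\}\cup(K_+L_+\cup K_-L_-)\cup\bigl(-(K_+L_-\cup K_-L_+)\bigr),
\end{equation*}
so the positive part of $K\cdot L$ is the union $K_+L_+\cup K_-L_-$ and the negative part is $-(K_+L_-\cup K_-L_+)$. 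Since all four products accumulate at $0$ (because $\inf K_\pm=\inf L_\pm=0$), it suffices to prove that the positive part equals some $(0,c]$ and the negative part some $[-d,0)$; together with $0\in K\cdot L$ this forces $K\cdot L=[-d,c]$.

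The heart of the matter is to show that each sign-quadrant union, e.g. $K_+L_+\cup K_-L_-$, has no gaps. Passing to logarithms turns the single product $K_+L_+$ into the sumset $\log K_+ + \log L_+$, which suggests applying Theorem \ref{sum_sum_sum}; the difficulty — and the reason $0$-Cantor sets obey a stronger condition than the $0^+$ case of Theorem \ref{kamidaro} — is that the logarithm is not thickness preserving, so its effect on the thickness of $K_+$ must be tracked scale by scale. Moreover the positive part $K_+\cup\{0\}$ of a $0$-Cantor set can have strictly smaller thickness than $K$ itself, since in the full set the left bridge of a gap may run across $0$ into $K_-$, an extension unavailable to the purely positive product $K_+L_+$. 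I would therefore quantify the transformed thickness of the relevant pieces in terms of $\tau(K),\tau(L)$ alone — equivalently, estimate the thickness of the inverse sets $y_0K_+^{-1}$ that arise when one fixes a target value $y_0$ and asks whether $L_+\cap y_0K_+^{-1}\neq\varnothing$ — and then invoke the Gap Lemma. The main obstacle is precisely this worst-case distortion estimate: one must identify the extremal gap configuration and show that, \emph{after} combining the two overlapping contributions $K_+L_+$ and $K_-L_-$ (which interleave densely near $0$ and thereby automatically fill all sufficiently small gaps), the remaining gaps are all closed exactly when \eqref{46578} holds. The specialization $\tau(K)=\tau(L)$ reduces \eqref{46578} to $(\tau(K)-1)^2\ge 2$, i.e. $\tau(K)\ge 1+\sqrt2$, which I would record as the stated corollary.

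For the sharpness statement I would argue by explicit construction. Because the near-$0$ region of a product of two $0$-Cantor sets is always an interval (the union $K_+L_+\cup K_-L_-$ interleaves densely there), any failure of the interval property can produce only a single macroscopic gap, hence in the extremal case exactly \emph{two} intervals; this is why, in contrast to Theorems \ref{kamidaro} and \ref{goddaro}, no ``$k$ intervals'' or ``countably many intervals'' phenomenon occurs here. I would take $K$ and $L$ to be (in general asymmetric) self-similar $0$-Cantor sets determined by a single gap ratio each, chosen so that $\tau(K)=M$, $\tau(L)=N$, and so that the top-scale gap of one of the two sign-quadrant unions is genuinely uncovered. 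A direct computation of the endpoints of that gap shows it survives exactly when the reverse inequality \eqref{thm3} holds, producing a disjoint union of two closed intervals. The only delicate points are to check that the prescribed self-similar data realizes the thickness values $M,N$ on the nose and that no other gap interferes; both follow from the single-gap self-similar structure.
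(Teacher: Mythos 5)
Your setup is the right one --- after taking logarithms the positive part of $K\cdot L$ becomes $(\widetilde{K_+}+\widetilde{L_+})\cup(\widetilde{K_-}+\widetilde{L_-})$, and the whole game is to show this union is a half line --- but the step you lean on to close the argument is false. You claim that $K_+L_+$ and $K_-L_-$ ``interleave densely near $0$ and thereby automatically fill all sufficiently small gaps,'' and later that ``the near-$0$ region of a product of two $0$-Cantor sets is always an interval,'' so that at most one macroscopic gap can survive. Theorem \ref{43215} of the paper refutes this: it constructs $0$-Cantor sets (the self-similar $(C,M)$-Cantor sets, whose negative part is a scaled copy of the positive part) for which $K\cdot L$ is $\{0\}$ together with countably many disjoint closed intervals accumulating at $0$. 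In a self-similar $0$-Cantor set the same multiplicatively bad gap recurs at every scale, so nothing is filled ``automatically'' near the origin; and the paper explicitly records as an open problem whether, under the negation \eqref{thm3} of \eqref{46578}, more than two intervals can occur --- so your parenthetical explanation of why only two intervals appear is not something you are entitled to.

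The idea your proposal is missing is the specific compensation mechanism between the two halves of \emph{each} Cantor set, which is where the constant in \eqref{46578} comes from. Set $C_{K,L}=\frac{\tau(K)+1}{\tau(K)\tau(L)-1}$ and $C_{L,K}=\frac{\tau(L)+1}{\tau(K)\tau(L)-1}$. One splits $\widetilde{K_+}$ into maximal pieces all of whose gaps $U$ satisfy $U^L/|U|\ge C_{K,L}$, separated by ``bad'' gaps; on each such piece the logarithmic distortion of thickness is controlled (Lemma \ref{akb}), and the constants are paired reciprocally so that the thickness product needed for Theorem \ref{sum_sum_sum} is exactly $1$ (Lemma \ref{123}). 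Each bad gap $U$ of $\widetilde{K_+}$ is then covered by a piece $X$ of $\widetilde{K_-}$ that overshoots $U$ on both sides by at least $\log\frac{\tau(K)-C_{K,L}}{1+C_{K,L}}$ (Lemma \ref{819}); condition \eqref{46578} is precisely equivalent to $\frac{\tau(K)-C_{K,L}}{1+C_{K,L}}\cdot\frac{\tau(L)-C_{L,K}}{1+C_{L,K}}\ge 1$ (Lemma \ref{what}), which makes $X+Y\supset U+S$ for the corresponding bad gaps of $\widetilde{L_+}$. Your ``worst-case distortion of $y_0K_+^{-1}$ plus Gap Lemma'' framing does not produce these constants, and you yourself flag that estimate as ``the main obstacle'' without resolving it. For sharpness the paper does not use a single-ratio self-similar set: it takes $K=K_1\sqcup K_2$ with two very thick blocks whose convex hulls are $[C_1-M,C_1]$ and $[1+C_1,1+C_1+M]$ (and similarly for $L$), tuned via \eqref{oi}--\eqref{io} so that exactly one gap of $\widetilde{K_+}+\widetilde{L_+}$ fails to be covered by $\widetilde{K_-}+\widetilde{L_-}$; the survival of that gap has to be verified by the endpoint computation, not deduced from a general ``only one gap is possible'' principle.
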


\begin{centering}
\begin{figure}[t]
\includegraphics[scale=1.00]{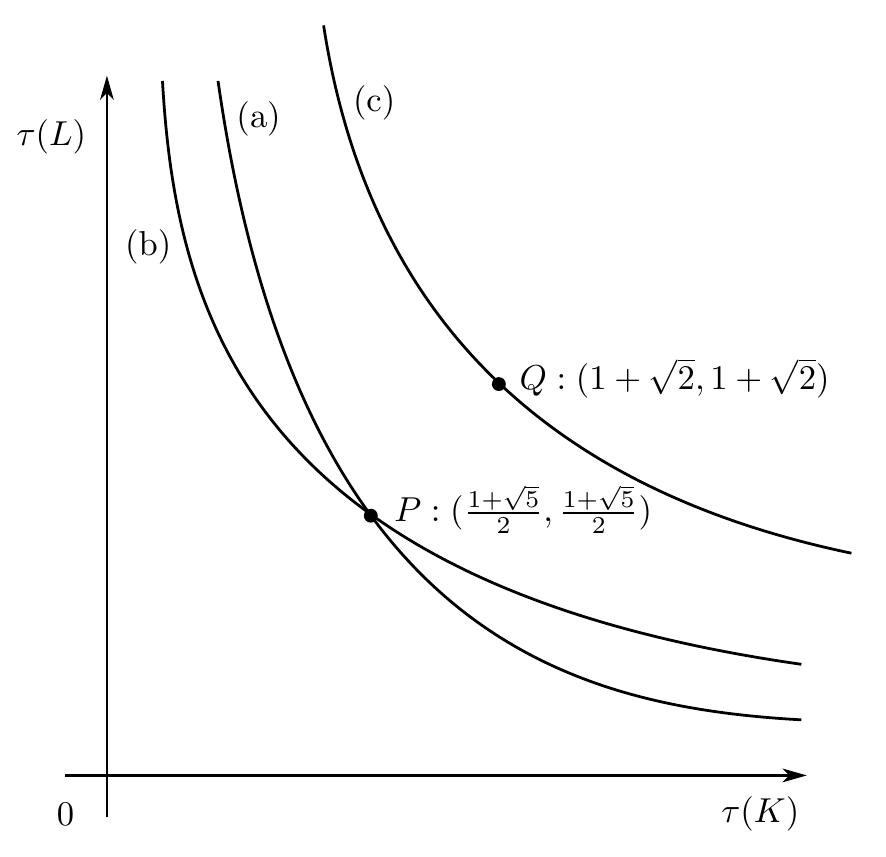}
\caption{(a), (b) are the graphs of (\ref{465}), and (c) is the graph of (\ref{46578}).}
\label{figure1}
\end{figure}
\end{centering}

We believe that Theorem \ref{migotoda} does not hold if we replace ``disjoint union of two intervals" with  
``disjoint union of countably many closed intervals", 
or  ``disjoint union of $k$ ($\geq 3$) closed intervals". 
It would be interesting to prove that this is indeed true. 

To ensure that the product may contain countably many disjoint closed intervals, we have the following estimate:

\begin{thm}\label{43215}
Let $M, N > 0$ be real numbers with $M \geq N$. 
Then, if 
\begin{equation}\label{intersection}
M < \frac{N^2 + 3N + 1}{N^2}, \text{ or } \ N < \frac{(2M+1)^2}{M^3},  
\end{equation}
there exist $0$-Cantor sets $K, L,$ such that $\tau(K) = M$, $\tau(L) = N$, and 
$K \cdot L$ is a disjoint union of $\{0\}$ and countably many closed intervals.
\end{thm}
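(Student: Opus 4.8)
The final statement (Theorem \ref{43215}) is an existence result, so the plan is to exhibit explicit self-similar $0$-Cantor sets realizing each of the two regimes in (\ref{intersection}) and to read the persistent gaps off the construction. First I would look for $K,L$ symmetric about the origin, writing $K=-K_+\cup\{0\}\cup K_+$ and likewise for $L$. Then the positive part of $K\cdot L$ equals $K_+\cdot L_+$ (because $K_-\cdot L_-=K_+\cdot L_+$) and the negative part is its reflection, so it suffices to make $K_+\cdot L_+$ a countable union of closed intervals accumulating at $0$ and separated by genuine gaps. I would take $K_+,L_+$ self-similar toward $0$ with a common ratio $\lambda\in(0,1)$, say $K_+=\{0\}\cup\bigcup_{n\geq 0}\lambda^n A$ and $L_+=\{0\}\cup\bigcup_{n\geq 0}\lambda^n B$, where $A,B\subseteq[\lambda,1]$ are Cantor sets with convex hull $[\lambda,1]$ and consecutive copies $\lambda^n A,\lambda^{n+1}A$ abutting at $\lambda^{n+1}$. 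Passing to logarithms turns each factor into a $p$-periodic set ($p=-\log\lambda$) and turns $K_+\cdot L_+$ into $\exp(\log A+\log B+p\mathbb{Z})$.

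Writing $\bar P,\bar Q\subseteq\mathbb{R}/p\mathbb{Z}$ for the images of $\log A,\log B$, the set $K_+\cdot L_+$ omits a nondegenerate interval inside each scale $[\lambda^{k+1},\lambda^k]$ exactly when $\bar P+\bar Q\neq\mathbb{R}/p\mathbb{Z}$, i.e.\ when some translate $s-\bar P$ is disjoint from $\bar Q$; this is precisely the intersection question mentioned in the introduction. The crucial observation is that $\tau(K)$ and the circle thickness $\tau(\bar P)$ are \emph{not} equal: since $0\in K$ and $K$ has no gap at $0$, the bridge adjacent to a small gap near the origin runs through all finer scales, across $0$, and into the negative side, so it is of order comparable to the \emph{position} of the gap rather than to its much smaller length. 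This inflates $\tau(K)$ well above $\tau(\bar P)$, which is the quantity actually governing the sum. It is exactly this discrepancy that allows $\tau(K)\tau(L)>1$ to coexist with $\bar P+\bar Q\neq\mathbb{R}/p\mathbb{Z}$, in accordance with the phenomenon promised earlier. To see that the filled portions are genuine intervals (so that we obtain finitely many arcs per period, hence exactly countably many intervals accumulating at $0$ rather than a Cantor-type product), I would apply Theorem \ref{sum_sum_sum} to the sub-pieces of $\bar P,\bar Q$ away from one distinguished configuration where the local thickness product is $\geq 1$; the pattern is arranged so that the circle-thickness product dips below $1$ only at this distinguished spot, which is what opens the omitted arc.

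Finally I would carry out the thickness computation for the symmetric self-similar $0$-Cantor set, expressing $\tau(K)=M$ and $\tau(L)=N$ in terms of the gap/bridge ratios of $A,B$ and the ratio $\lambda$, being careful to identify which gap attains the infimum (a top-scale gap of the pattern versus a near-$0$ gap whose bridge crosses the origin). The two inequalities in (\ref{intersection}) should then emerge as the two configurations in which a disjoint translate $s-\bar P$ still exists: one where the binding constraint is that $K$ is thin, yielding $M<(N^2+3N+1)/N^2$, and one where it is that $L$ is thin, yielding $N<(2M+1)^2/M^3$. Since the construction carries a free continuous parameter, I would tune it so that the two thicknesses equal $M$ and $N$ exactly. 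The main obstacle is the thickness bookkeeping for the $0$-Cantor set: correctly accounting for the bridges that cross the origin, verifying that the resulting thickness is \emph{exactly} $M$ and $N$ rather than merely bounded, and checking that the two resulting thresholds are the precise boundaries at which the disjoint translate (equivalently, the omitted arc) disappears.
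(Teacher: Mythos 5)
Your overall architecture matches the paper's: build multiplicatively self-similar $0$-Cantor sets from a pattern repeated at geometric scales accumulating at $0$, pass to logarithms to get periodic sets, obtain the persistent gap in each scale from a size/thickness deficiency at one distinguished spot (the paper uses Theorem \ref{sum_sum_sum} plus the observation that the largest gap of one log-set exceeds the diameter of the other's fundamental block), and exploit the fact that bridges crossing the origin inflate $\tau(K)$ above the ``per-period'' thickness. All of that is sound and is exactly how the paper proceeds.

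The genuine gap is your very first structural choice: taking $K$ and $L$ symmetric about the origin. The paper instead takes $K_-=\sqrt{\rho}\,K_+$ with $\rho=C/(1+C+M)$, i.e.\ in log coordinates the negative part sits exactly half a period between consecutive blocks of the positive part, and this half-period interleaving is what makes the thresholds in (\ref{intersection}) reachable. Concretely, for the block gap $U$ at scale $n$ (left endpoint $\rho^{n+1}$, length $\rho^{n}/(1+C+M)$), the bridge that runs leftward through $0$ ends at the nearest negative gap of comparable size: with the half-shift this gives the ratio $C+\sqrt{C(1+C+M)}$ (Lemma \ref{chain}), whereas with your symmetric reflection the mirror gap sits at $-\rho^{n+1}$ and the ratio drops to $2C$, which is strictly smaller since $C<1+C+M$. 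Hence to force $\tau(K)=M$ you need $C\geq M/2$ rather than the paper's $C\geq M^{2}/(3M+1)$, so your gap-to-position ratio $1/C$ is smaller and the persistent-gap condition is harder to satisfy. Running your construction through the same period-matching bookkeeping ($C_2=C_1(1+N)/(1+M)$, persistent gap iff $1/C_1>N/(1+C_2)$) yields the threshold $N<(3M+2)/M^{2}$ in place of $N<(2M+1)^{2}/M^{3}$, and the former is strictly smaller for all $M>0$. This is not a bookkeeping subtlety you can tune away: at $(M,N)=(3,\,1.8)$ one has $1.8<49/27$ (so the theorem applies via the second inequality, while the first fails since $3>2.975\ldots$), yet $1.8>11/9$, so the symmetric construction cannot realize this pair. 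The missing idea is precisely that the negative part should not mirror the positive part but should be offset by the square root of the scaling ratio, so that it boosts the thickness of $K$ maximally while still contributing nothing new to $K\cdot L$ (one checks $K_-\cdot L_-=\rho\,(K_+\cdot L_+)\subset K_+\cdot L_+$ once the two periods are matched).
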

We are not sure whether the estimate in Theorem \ref{43215} is optimal. We leave this as an open problem.

As stated above, the study of intersections of Cantor sets is naturally connected to the study of sums of Cantor sets. 
In fact, it is also connected to the study of products of Cantor sets. 
Indeed, the estimate in Theorem \ref{43215} is exactly the same as the estimate that appears in \cite{Hunt} and \cite{Kraft}, 
which is the optimal estimate that two interleaved Cantor sets may have a one point intersection
(\cite{Hunt} and \cite{Kraft} obtained the same results independently).
Our method provides different proofs to some of the results presented there. 
On the other hand, we do not believe that our results follow from 
the techniques in \cite{Hunt}, \cite{Kraft}. 
We discuss this connection in more detail in section \ref{65743}. 

\subsection{Structure of this paper}
In section \ref{765}, we give necessary definitions and prove that the product of two Cantor sets $K \cdot L$ may contain 
countably many disjoint closed intervals in the case of 
$\tau(K) \cdot \tau(L) > 1$.
In section \ref{7689} we prove the key lemma, which is the optimal estimate of the thickness of $\log K$ 
for certain type of Cantor set $K$.
Using the results established in section \ref{7689}, we prove Theorem \ref{kamidaro}, \ref{goddaro}, and \ref{migotoda} in section \ref{9807}. 
In section \ref{5879}, we present some analogous 
results which are not stated in section \ref{9807}.  
In section \ref{65743}, we discuss the connection between the question on products of two Cantor sets 
and the question on intersections of two Cantor sets. 
In section \ref{47325}, we state some open problems.

\section{Preliminaries}\label{765}

\begin{dfn}
For any Cantor set $K \subset \mathbb{R}$, we denote the right and left endpoint of $K$ by $K^{R}$ and $K^{L}$, respectively. 
We denote $K^R - K^L$ by $|K|$. 
If two Cantor sets $K_{1}, K_{2}$ satisfy $K_{1}^{R} < K_{2}^{L}$, we write $K_{1} < K_{2}$.
For any gaps $U$, $U_{1}$ and $U_{2}$, we define $U^{R}$, $U^{L}$, $|U|$ and $U_{1} < U_{2}$ analogously.
For any set $A \subset (0, \infty)$, we denote $\log A$ by $\widetilde{A}$.  
\end{dfn}

\begin{dfn}
We call $K$ an \emph{extended Cantor set} if 
$K$ is a closed, perfect, and nowhere dense set which is 
bounded from above and unbounded from below.
\end{dfn}

The following is immediate:

\begin{lem}
If a Cantor set $K$ satisfies $K > 0$, $\widetilde{K}$ is again a Cantor set. If $K$ is a $0^+$-Cantor set, then  
$\widetilde{K_+}$ is an extended Cantor set.
\end{lem}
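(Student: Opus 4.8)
The plan is to reduce everything to the single observation that $\log \colon (0,\infty) \to \mathbb{R}$ is a homeomorphism with inverse $\exp$, and then track which topological properties transfer and how the two boundary behaviors of $\log$ (stretching a neighborhood of $0$ to $-\infty$, and sending bounded-above sets to bounded-above sets) interact with the two definitions in question.

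For the first statement, suppose $K > 0$, so that $K$ is a compact subset of $(0,\infty)$ with $K^L > 0$. Since $\log$ is a homeomorphism of $(0,\infty)$ onto $\mathbb{R}$, compactness passes to $\widetilde{K} = \log K$, so $\widetilde{K}$ is closed and bounded. Being perfect (no isolated points) and being nowhere dense (closure with empty interior) are preserved under a homeomorphism, and $K$ has both properties; hence so does $\widetilde{K}$. Thus $\widetilde{K}$ is a Cantor set.

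For the second statement, let $K$ be a $0^+$-Cantor set, so $\min K = 0$ and $K_+ = K \cap (0,\infty)$. I would verify the four defining properties of an extended Cantor set in turn. Boundedness above is immediate from $K_+ \subset (0, K^R]$, which gives $\widetilde{K_+} \subset (-\infty, \log K^R]$. For unboundedness below, note that since $K$ is perfect the point $0 = \min K$ is not isolated, so some sequence $x_n \in K_+$ tends to $0$ and $\log x_n \to -\infty$. The facts that $\widetilde{K_+}$ is perfect and nowhere dense again follow from $\log$ being a homeomorphism of $(0,\infty)$ onto $\mathbb{R}$, together with the observation that $K_+ \subseteq K$ inherits having no isolated points and empty interior.

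The one point that is not purely formal — and the step I would treat most carefully — is closedness of $\widetilde{K_+}$ in $\mathbb{R}$. The set $K_+$ itself is \emph{not} closed in $\mathbb{R}$, since $0$ is a limit point of $K_+$ that does not belong to $K_+$. However, $K_+ = K \cap (0,\infty)$ is relatively closed in the open half-line $(0,\infty)$, and $\log$ carries $(0,\infty)$ homeomorphically onto all of $\mathbb{R}$, so the image of a relatively closed set is closed in $\mathbb{R}$. Concretely, if $\log x_n \to y \in \mathbb{R}$ with $x_n \in K_+$, then $x_n \to e^y > 0$, whence $e^y \in K$ because $K$ is closed, and in fact $e^y \in K_+$, so $y \in \widetilde{K_+}$. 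This is precisely the mechanism by which the accumulation of $K_+$ at $0$ is sent to $-\infty$ rather than producing a missing finite limit point, and it is exactly why $\widetilde{K_+}$ can be simultaneously unbounded below and closed.
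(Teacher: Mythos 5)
Your proposal is correct and complete; the paper states this lemma without proof (it is introduced with ``The following is immediate''), and your verification is exactly the intended reasoning. In particular, your careful treatment of the closedness of $\widetilde{K_+}$ --- via the observation that $K_+ = K \cap (0,\infty)$ is relatively closed in $(0,\infty)$ and that $\log$ carries $(0,\infty)$ homeomorphically onto all of $\mathbb{R}$, so the accumulation of $K_+$ at $0$ is pushed to $-\infty$ rather than producing a missing finite limit point --- is the one genuinely non-formal step, and you handle it correctly.
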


\begin{dfn}\label{thickness_def}
Let $K$ be a Cantor set, or an extended Cantor set. Define the \emph{thickness} of $K$ by 
\begin{equation*}
\inf_{U_{1} < U_{2}} \max \left\{ \frac{U^{L}_{2} - U^{R}_{1}}{| U_{1} |} , \frac{U^{L}_{2} - U^{R}_{1}}{| U_{2} |} \right\},
\end{equation*}
where the infimum is taken for all pairs of gaps of $K$, with at least one of them being a finite gap. 
We denote this value by $\tau(K)$.
\end{dfn}

\begin{rem}
Definition \ref{thickness_def} is not the most standard, 
but  if $K$ is a Cantor set it is easy to see that it coincides with the usual definition of thickness. 
Compare the definition in chapter 4 of \cite{PalisTakens}. 
\end{rem}

If we drop the assumption of sizes of $K$ and $L$ in Theorem \ref{sum_sum_sum}, we obtain Theorem \ref{sum_of_Cantor_set}. 
For the reader's convenience, we include the proof (Theorem \ref{sum_sum_sum} can be shown in a similar way).

\begin{thm}\label{sum_of_Cantor_set}
Let $K$ and $L$ be Cantor sets with $\tau(K) \cdot \tau(L) \geq 1$. 
Then, $K + L$ is the disjoint union of finitely many closed intervals.
\end{thm}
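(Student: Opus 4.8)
The plan is to avoid any recursive splitting along large gaps and instead describe the complement of $K+L$ explicitly, using the Gap Lemma as a membership test. Fix a point $t$ in the convex hull $H := [K^L + L^L,\, K^R + L^R]$ of $K+L$. The elementary observation is that $t \in K+L$ if and only if $(t-K) \cap L \neq \phi$, where $t-K = \{\,t-k : k \in K\,\}$ is a reflected and translated copy of $K$. Since thickness is invariant under reflection and translation, $\tau(t-K) = \tau(K)$, and hence $\tau(t-K)\cdot\tau(L) = \tau(K)\cdot\tau(L) \geq 1$. Thus the pair $(t-K, L)$ satisfies the hypothesis of the Gap Lemma (in the $\geq 1$ form noted after its statement).

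Next I would apply the Gap Lemma to $(t-K, L)$. If $(t-K)\cap L = \phi$, then one of the two sets lies in a complementary domain of the other. Because $t \in H$ one checks that $t - K^R \leq L^R$ and $L^L \leq t - K^L$, so the convex hulls $[t-K^R,\, t-K^L]$ and $[L^L, L^R]$ overlap; consequently neither set can lie in an \emph{unbounded} complementary domain of the other, and the offending complementary domain must be a bounded gap. This leaves exactly two possibilities, which I would translate into explicit conditions on $t$: either $L$ lies in a gap of $t-K$ — such a gap comes from a gap $U$ of $K$ with $|U| > |L|$, and this forces $t \in (U^L + L^R,\, U^R + L^L)$ — or $t-K$ lies in a gap $V$ of $L$ with $|V| > |K|$, which forces $t \in (V^L + K^R,\, V^R + K^L)$.

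Finally I would conclude by a counting argument. The previous step shows that $H \setminus (K+L)$ is contained in the union of the open intervals $(U^L + L^R,\, U^R + L^L)$ over gaps $U$ of $K$ with $|U| > |L|$, together with the intervals $(V^L + K^R,\, V^R + K^L)$ over gaps $V$ of $L$ with $|V| > |K|$. Since the gaps of a Cantor set are pairwise disjoint open subintervals of its convex hull, their total length is at most $|K|$ (resp.\ $|L|$), so only finitely many of them can exceed the fixed positive threshold $|L|$ (resp.\ $|K|$); indeed there are at most $|K|/|L|$ and $|L|/|K|$ of them respectively. Hence $H \setminus (K+L)$ is a finite union of open intervals, and therefore $K+L$ is a disjoint union of finitely many closed intervals, with the number of components bounded by $1 + |K|/|L| + |L|/|K|$.

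The step I expect to be the main obstacle is the boundary bookkeeping needed to make the membership characterization rigorous: one must verify that $K+L$ is closed (so that its complement in $H$ is genuinely open and decomposes into open intervals), that endpoints such as $t = U^L + L^R$ actually belong to $K+L$ (taking $k = U^L \in K$ gives $t - k = L^R \in L$), and that the Gap Lemma is being invoked in its $\geq 1$ form together with the reflection invariance of thickness. I would also take care over the overlap-of-convex-hulls claim, since that is precisely what rules out the two unbounded complementary domains and confines the analysis to the finitely many genuine gaps.
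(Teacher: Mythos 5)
Your argument is correct and follows essentially the same route as the paper: test membership of $t$ via the Gap Lemma applied to a reflected translate, rule out the disjoint-convex-hull case using $t \in H$, and observe that the remaining cases confine $t$ to open intervals coming from gaps of $K$ longer than $|L|$ or gaps of $L$ longer than $|K|$, of which there are only finitely many. The paper leaves that final counting step as ``easy to see,'' which you make explicit; just be sure to also record the (immediate) converse --- each such interval is genuinely disjoint from $K+L$, since $L$ then sits inside a gap of $t-K$ --- so that $H \setminus (K+L)$ \emph{equals}, rather than merely sits inside, this finite union of open intervals.
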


\begin{proof}
It is enough to show that $K + L$ has at most finitely many 
open gaps in $[ K^{L} + L^{L}, K^{R} + L^{R} ]$.
Suppose $x \in [ K^{L} + L^{L}, K^{R} + L^{R} ]$ and $x \notin K + L$.
It is easy to see that  
\begin{equation*}
x \notin K + L \iff K \cap ( x - L ) = \phi.
\end{equation*}
Note that $x - L$ is again a Cantor set. Since we have 
\begin{equation*}
\tau(K) \cdot \tau(x -L) = \tau(K) \cdot \tau(L) \geq 1, 
\end{equation*}
the Gap Lemma implies that there are only three possibilities:
\begin{itemize}
\item[(1)] the intervals $[ K^{L}, K^{R} ]$ and $[ x - L^{R}, x - L^{L} ]$ are disjoint; 
\item[(2)] the set $K$ is contained in a finite gap of the set $( x - L )$; 
\item[(3)] the set $(x - L)$ is contained in a finite gap of the set $K$. 
\end{itemize}
Case (1) contradicts the assumption that $x \in [K^{L} + L^{L}, K^{R} + L^{R}]$. 
It is easy to see that the set of points which satisfy (2) or (3) is a union of finitely many 
open intervals. 
\end{proof}

The following is immediate from the definition of thickness:

\begin{lem}\label{straightforward}
Let $K$ be a Cantor set, and let $U$ be a gap of the maximal size of $K$. Let 
$K_{1} = K \cap [K^{L}, U^{L}] \text{ and } \ K_{2} = K \cap [U^{R}, K^{R}]$.
Then  
$\tau(K_{1}) \geq \tau(K) \text{ and \,} \tau(K_{2}) \geq \tau(K)$.
\end{lem}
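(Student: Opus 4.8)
The plan is to compare the two infima defining $\tau(K_1)$ and $\tau(K)$ directly, showing that every value appearing in the infimum for $K_1$ already appears in the infimum for $K$; since an infimum over a smaller collection of values can only be larger, this yields $\tau(K_1) \geq \tau(K)$. First I would list the gaps of $K_1 = K \cap [K^L, U^L]$. Because $U^L \in K$, the finite gaps of $K_1$ are precisely the finite gaps of $K$ lying to the left of $U$, while its two infinite gaps are $(-\infty, K^L)$ and $(U^L, \infty)$; note that the right infinite gap $(U^L, \infty)$ has replaced the finite maximal gap $U$ of $K$.

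Next I would run through the admissible pairs $U_1 < U_2$ of gaps of $K_1$ (those with at least one finite) and match each to a pair of gaps of $K$ with the same value of $\max\{ (U_2^L - U_1^R)/|U_1|, (U_2^L - U_1^R)/|U_2| \}$. If both $U_1, U_2$ are finite, or if the only infinite gap involved is the common left gap $(-\infty, K^L)$, the pair is literally a pair of gaps of $K$ and the value is unchanged (the infinite gap contributes a term $0$ in both computations). The only genuinely new pairs are those in which $U_2 = (U^L, \infty)$ is the right infinite gap of $K_1$; here the value is $(U^L - U_1^R)/|U_1|$. I would match this to the pair $(U_1, U)$ of gaps of $K$, whose value is $\max\{ (U^L - U_1^R)/|U_1|, (U^L - U_1^R)/|U| \}$.

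The main point, and the only place where the hypothesis that $U$ has maximal size is used, is that for this last type of pair the two values agree: since $|U| \geq |U_1|$, we have $(U^L - U_1^R)/|U| \leq (U^L - U_1^R)/|U_1|$, so the maximum for the pair $(U_1, U)$ in $K$ equals $(U^L - U_1^R)/|U_1|$, matching the value for the pair in $K_1$. Hence every value in the $\tau(K_1)$ infimum occurs in the $\tau(K)$ infimum, giving $\tau(K_1) \geq \tau(K)$. The argument for $K_2 = K \cap [U^R, K^R]$ is symmetric, with the roles of left and right interchanged. In the degenerate case where a piece has no finite gap it is an interval, its thickness is $+\infty$, and the inequality is trivial. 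I expect the bookkeeping of the infinite-gap pairs, together with pinning down exactly where maximality of $U$ enters, to be the only delicate step.
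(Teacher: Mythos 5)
Your proof is correct, and it fills in precisely the bookkeeping that the paper leaves implicit: Lemma \ref{straightforward} is stated there with no proof at all (``immediate from the definition of thickness''), and the intended argument is exactly your direct comparison of the two infima, with maximality of $U$ entering only to show that the pair $(U_1,U)$ in $K$ realizes the same value as the pair $(U_1,(U^L,\infty))$ in $K_1$. No gaps; the degenerate case you mention cannot actually occur since $K$ is perfect, but noting it does no harm.
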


\begin{lem}\label{1248}
Let $K$ be a Cantor set, and let $C_1, C_2 > 0$ be real numbers. 
Let $f : \mathbb{R} \to \mathbb{R}$ be a strictly increasing continuous function. Assume further that 
for any $p_{1}, p_{2}, p_{3} \in K$ with $p_1 < p_2 < p_3$, we have 
\begin{equation*}
C_{1} \frac{ p_{3} - p_{2} }{ p_{2} - p_{1} }  <  \frac{ f( p_{3} ) - f( p_{2} ) }{ f( p_{2} ) - f( p_{1} ) } < 
C_{2} \frac{ p_{3} - p_{2} }{ p_{2} - p_{1} }.
\end{equation*}
Write $L = f(K)$. Then, we have 
$C_{1} \tau( K ) \leq \tau( L ) \leq C_{2} \tau( K )$.
\end{lem}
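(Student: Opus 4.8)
The plan is to first record that, since $f$ is strictly increasing and continuous, $f|_K\colon K\to L$ is an order-preserving homeomorphism which carries each gap $U=(U^L,U^R)$ of $K$ to the gap $\tilde U:=(f(U^L),f(U^R))$ of $L$, and this is an order-preserving bijection between the gaps of $K$ and those of $L$. I would then reduce the two inequalities to a single one: if $f$ satisfies the hypothesis with constants $(C_1,C_2)$, then a direct computation shows $g:=f^{-1}$ satisfies the same hypothesis on $L$ with constants $(1/C_2,1/C_1)$, and $g(L)=K$. Hence the lower bound $\tau(L)\geq C_1\tau(K)$ follows from the upper bound applied to $g$, namely $\tau(K)=\tau(g(L))\leq (1/C_1)\tau(L)$. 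So it suffices to prove $\tau(L)\leq C_2\tau(K)$.

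For the upper bound the key pointwise estimate is the following. Fix a pair of gaps $U_1<U_2$ of $K$ whose images satisfy $|\tilde U_1|\leq|\tilde U_2|$ (the smaller $L$-gap on the left). For such a pair the quantity inside the infimum for $L$ equals $\frac{\tilde U_2^L-\tilde U_1^R}{|\tilde U_1|}$, and applying the hypothesis to the triple $U_1^L<U_1^R<U_2^L$ gives
\begin{equation*}
\frac{\tilde U_2^L-\tilde U_1^R}{|\tilde U_1|}<C_2\,\frac{U_2^L-U_1^R}{|U_1|}\leq C_2\max\!\left\{\frac{U_2^L-U_1^R}{|U_1|},\frac{U_2^L-U_1^R}{|U_2|}\right\}.
\end{equation*}
Thus the value at such an $L$-pair is at most $C_2$ times the value at the corresponding $K$-pair. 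I would stress here that the analogous attempt using the triple $U_1^R<U_2^L<U_2^R$ only yields $\frac{\tilde U_2^L-\tilde U_1^R}{|\tilde U_2|}<\frac{1}{C_1}\frac{U_2^L-U_1^R}{|U_2|}$, with the reciprocal constant $1/C_1$; this is why only pairs with the smaller gap on the left are usable, and why one must be careful rather than applying the hypothesis naively to both ratios.

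To assemble the global bound I would use the right-bridge description of thickness: the infimum in Definition \ref{thickness_def} is unchanged if one restricts to pairs $(U_1,U_2)$ in which $U_2$ is the first gap to the right of $U_1$ with $|U_2|\geq|U_1|$ (these realize the infimum, exactly as in the reduction already implicit in Lemma \ref{straightforward}). For $L$ such pairs automatically satisfy $|\tilde U_1|\leq|\tilde U_2|$, so the estimate of the previous paragraph applies to every one of them, giving $\tau(L)\leq C_2\,\inf r_K$ where the infimum is taken over the $K$-pairs underlying the right-bridge pairs of $L$.

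The main obstacle is precisely this last infimum: because $f$ need not preserve the relative lengths of two gaps, the "first gap of size $\geq|\tilde U_1|$ to the right" in $L$ may correspond to a different (farther) gap than the "first gap of size $\geq|U_1|$ to the right" in $K$, so a pair realizing $\tau(K)$ with its smaller gap on the left can have its smaller gap on the right after applying $f$. The delicate point, which I expect to be the crux of the proof, is therefore to show that this reordering does not push $\inf r_K$ over the set of $L$-right-bridge pairs strictly above $\tau(K)$. I would handle this by arguing that the infimum defining $\tau(K)$ is approached by pairs along which the order of the two gap lengths is preserved by $f$ — using that on small scales $f$ is essentially affine, together with the right-bridge structure — so that near-optimal min-left pairs of $K$ remain min-left in $L$; combined with the pointwise estimate this yields $\tau(L)\leq C_2\tau(K)$, and the companion bound then follows from the $f^{-1}$ reduction above.
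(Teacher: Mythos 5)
Your instinct that there is a real obstruction here is correct, but your plan for overcoming it does not work, and in fact cannot work: the step you flag as ``the crux'' is a genuine gap, because the lemma as stated is false without a further restriction on $C_1,C_2$. The root cause is the asymmetry you half-identified: for a pair of gaps $U_1<U_2$ with images $W_1<W_2$, the hypothesis (applied to $U_1^L<U_1^R<U_2^L$) controls $\frac{W_2^L-W_1^R}{|W_1|}$ in terms of $\frac{U_2^L-U_1^R}{|U_1|}$ with constants $(C_1,C_2)$, but (applied to $U_1^R<U_2^L<U_2^R$) it controls $\frac{W_2^L-W_1^R}{|W_2|}$ in terms of $\frac{U_2^L-U_1^R}{|U_2|}$ only with the reciprocal constants $(1/C_2,1/C_1)$. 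The hypothesis forces $C_1\le 1\le C_2$ but not $C_1C_2\le 1$, and when $C_1C_2>1$ and the pair realizing $\tau(K)$ has its \emph{smaller} gap on the right, the image pair can drop below $C_1\tau(K)$. Concretely, let $K=A\sqcup B\sqcup C$ with $A,B,C$ middle-$\alpha$ sets ($\alpha$ tiny) on $[0,10]$, $[20,21]$, $[22,32]$; then $\tau(K)=1$, realized by $U_1=(10,20)$, $U_2=(21,22)$. Let $f$ be piecewise affine with slopes $1,\,0.95,\,1,\,1.5,\,1.4$ on $[0,10],[10,20],[20,21],[21,22],[22,32]$. Every triple of points of $K$ then satisfies the hypothesis with $C_1=0.9$, $C_2=2$ (all realizable slope ratios lie in $[0.93,1.58]$), yet $|W_1|=9.5$, $|W_2|=1.5$ and the gap between them has length $1$, so $\tau(L)\le \frac{1}{1.5}=\frac23<0.9=C_1\tau(K)$. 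Consequently your fallback (``on small scales $f$ is essentially affine'') is not implied by the hypotheses and cannot rescue the general statement; separately, your ``right-bridge'' characterization of thickness is also incorrect as stated --- one must take both left and right bridges, and in the example above the infimum over right-bridge pairs alone is $1.2$ while $\tau(K)=1$.

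For comparison, the paper's own proof is the short pull-back argument: take a near-optimal pair $W_1<W_2$ for $L$, pull back to $U_1<U_2$, and assert $\max\bigl\{\frac{W_2^L-W_1^R}{|W_1|},\frac{W_2^L-W_1^R}{|W_2|}\bigr\}>C_1\max\bigl\{\frac{U_2^L-U_1^R}{|U_1|},\frac{U_2^L-U_1^R}{|U_2|}\bigr\}$. That asserted inequality is exactly the step that fails when $|U_2|<|U_1|$, so the published proof has the same gap you were circling; you deserve credit for noticing that something nontrivial is being elided. The statement and both arguments are repaired by adding the hypothesis $C_1C_2\le 1$ for the lower bound (equivalently $C_1C_2\ge 1$ for the upper bound, which is what your valid $f^{-1}$ reduction converts it into): then $\frac{1}{C_2}\ge C_1$, the ``wrong-sided'' ratio is still bounded below by $C_1$ times its preimage, and the one-line pull-back argument closes. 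This restriction is harmless for the only application in the paper, Corollary \ref{so_easy}, where $f=\log$ on a short interval bounded away from $0$ allows $C_1=c$ and $C_2=1/c$, so that $C_1C_2=1$.
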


\begin{proof}
Let $\epsilon > 0$.
By the definition of thickness there exist two gaps of $L$, say $W_{1}, W_{2}$, 
which satisfy $W_{1} < W_{2}$ and 
\begin{equation*}
\tau( L ) + \epsilon > 
\max \left\{ \frac{ W_{2}^{L} - W_{1}^{R} }{ | W_{1} | },  \frac{ W_{2}^{L} - W_{1}^{R} }{ | W_{2} | }  \right\}.
\end{equation*}
Let $U_{1}, U_{2}$ be the gaps of $K$ with $f(U_{1}) = W_{1}$ and $f(U_{2}) = W_{2}$.
Then, 
\begin{equation*}
\max \left\{ \frac{ W_{2}^{L} - W_{1}^{R} }{ | W_{1} | },  \frac{ W_{2}^{L} - W_{1}^{R} }{ | W_{2} | }  \right\} > 
C_{1} \max \left\{ \frac{ U_{2}^{L} - U_{1}^{R} }{ | U_{1} | },  \frac{ U_{2}^{L} - U_{1}^{R} }{ | U_{2} | }  \right\}
\geq C_{1} \tau(K).
\end{equation*}
Since $\epsilon > 0$ was arbitrary, $\tau(L ) \geq C_{1} \tau(K)$. The other inequality can be shown analogously.
\end{proof}

\begin{cor}\label{so_easy}
Let $\epsilon, c > 0$ be real numbers and assume that $0 < c < 1$. 
Then there exists $\delta > 0$ 
such that for any Cantor set $K$ with $|K| < \delta$ and $K^L > \epsilon$, $\tau( \widetilde{K} )  > c \cdot \tau(K)$.
\end{cor}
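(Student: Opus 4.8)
The plan is to invoke Lemma \ref{1248} with $f = \log$, which directly relates $\tau(\widetilde{K}) = \tau(\log K)$ to $\tau(K)$ once we control the distortion of $\log$ on the interval $[K^L, K^R]$ containing $K$. Since the conclusion we want is only the lower bound $\tau(\widetilde{K}) > c\,\tau(K)$, it suffices to produce a constant $C_1$ with $C_1 > c$ in the hypothesis of Lemma \ref{1248}; the upper-bound constant $C_2$ will play no role.

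First I would fix three points $p_1 < p_2 < p_3$ of $K$ and estimate the ratio
\[
\frac{(\log p_3 - \log p_2)/(\log p_2 - \log p_1)}{(p_3 - p_2)/(p_2 - p_1)}.
\]
By the mean value theorem there exist $\xi \in (p_2, p_3)$ and $\eta \in (p_1, p_2)$ with $\log p_3 - \log p_2 = (p_3 - p_2)/\xi$ and $\log p_2 - \log p_1 = (p_2 - p_1)/\eta$, so the ratio above equals $\eta/\xi$. Since both $\xi$ and $\eta$ lie in $(K^L, K^R)$, and $K^R = K^L + |K| < K^L + \delta$, monotonicity of $t \mapsto t/(t+\delta)$ together with $K^L > \epsilon$ gives
\[
\frac{\eta}{\xi} > \frac{K^L}{K^R} > \frac{K^L}{K^L + \delta} > \frac{\epsilon}{\epsilon + \delta}.
\]

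Then I would set $C_1 = \epsilon/(\epsilon + \delta)$, so that the left inequality in the hypothesis of Lemma \ref{1248} holds, and conclude $\tau(\widetilde{K}) \geq C_1 \tau(K)$. Finally, choosing $\delta > 0$ small enough that $\epsilon/(\epsilon + \delta) > c$ --- explicitly any $\delta < \epsilon(1-c)/c$ works --- yields $\tau(\widetilde{K}) \geq C_1 \tau(K) > c\,\tau(K)$, as desired. Note that this $\delta$ depends only on $\epsilon$ and $c$, uniformly over all admissible $K$.

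There is no serious obstacle here; the only point requiring care is that the estimate be uniform in $K$, which is guaranteed because the bound $\eta/\xi > \epsilon/(\epsilon+\delta)$ uses only the two hypotheses $K^L > \epsilon$ and $|K| < \delta$ and not the fine structure of $K$. The essential observation is that on an interval bounded away from $0$ the map $\log$ has distortion tending to $1$ as the interval shrinks, so its effect on thickness is negligible.
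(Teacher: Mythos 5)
Your proposal is correct and follows exactly the route the paper intends: the corollary is stated without proof immediately after Lemma \ref{1248} precisely because it is this application of that lemma with $f=\log$, and your mean value theorem computation giving $\eta/\xi > \epsilon/(\epsilon+\delta)$ is the intended uniform distortion bound. The only cosmetic point is that Lemma \ref{1248} formally also asks for an upper-bound constant $C_2$; since $\eta < p_2 < \xi$ you may simply take $C_2 = 1$, so nothing is missing.
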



\begin{lem}\label{product}
Let $K$ and $L$ be Cantor sets with $\tau(K) \cdot \tau(L) > 1$. Assume that $0 \notin K, L$.
Then $K \cdot L$ is the disjoint union of finitely many closed intervals. 
\end{lem}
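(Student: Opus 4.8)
The plan is to reduce the product to a finite union of \emph{sums} of logarithms and then invoke Theorem \ref{sum_of_Cantor_set}; the only genuine work is to control the thickness after two operations that a priori distort it, namely passing to a sub-Cantor-set and taking logarithms. Since $0 \notin K$ and $K$ is compact, $d_K := \mathrm{dist}(0,K) > 0$, and likewise $d_L > 0$; in particular $0 \notin K \cdot L$. Because $\tau(K)\tau(L) > 1$, I would first fix $c \in (0,1)$ with $c^2\,\tau(K)\,\tau(L) \geq 1$, and let $\delta_K, \delta_L > 0$ be the constants furnished by Corollary \ref{so_easy} for this $c$ and for $\epsilon = d_K/2$ and $\epsilon = d_L/2$ respectively.

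The central step is a subdivision. I claim that for every $\delta > 0$ one can split $K$, at all of its gaps of length $\geq s$ for a suitable small $s>0$, into finitely many Cantor sets $P_1,\dots,P_m$ each of diameter $<\delta$ and with $\tau(P_i)\geq \tau(K)$. That finitely many pieces of diameter $<\delta$ arise for small $s$ is the standard fact that a compact nowhere dense set has, for each $\delta$, only finitely many gaps whose removal leaves pieces shorter than $\delta$ (otherwise $K$ would contain an interval). The thickness bound is the key point: every gap internal to a piece $P_i$ has length $<s$, while the gap of $K$ flanking $P_i$ on either side has length $\geq s$, so each pair of gaps of $P_i$ involving one of its two unbounded gaps realizes, in $K$, the same ratio as a pair consisting of a large flanking gap and a smaller internal gap; for such a pair the maximum in Definition \ref{thickness_def} is attained on the internal gap, and that ratio is $\geq \tau(K)$. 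Pairs of bounded gaps of $P_i$ are pairs of gaps of $K$ and give ratios $\geq \tau(K)$ directly. (This is the mechanism behind Lemma \ref{straightforward}.) Choosing $\delta < \min(\delta_K, d_K)$ forces the pieces to be single-signed as well, since no piece of diameter $<d_K$ can straddle $0$. Doing the same for $L$ with $\delta < \min(\delta_L,d_L)$ yields $Q_1,\dots,Q_n$.

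Then $K\cdot L = \bigcup_{i,j} P_i Q_j$ is a finite union, so it suffices to treat one factor $P_i Q_j$. Replacing $P_i,Q_j$ by $-P_i,-Q_j$ when needed — which changes the product only by an overall sign and leaves thicknesses unchanged — I may assume $A := P_i$ and $B := Q_j$ are positive Cantor sets, bounded away from $0$ by $d_K/2$ and $d_L/2$ and of diameters $<\delta_K,\delta_L$. Now $A\cdot B = \exp(\widetilde{A} + \widetilde{B})$, and since $\exp$ is a homeomorphism, $A\cdot B$ is a finite union of closed intervals iff $\widetilde A + \widetilde B$ is. By Corollary \ref{so_easy}, $\tau(\widetilde A) > c\,\tau(A) \geq c\,\tau(K)$ and $\tau(\widetilde B) > c\,\tau(B) \geq c\,\tau(L)$, whence $\tau(\widetilde A)\,\tau(\widetilde B) > c^2\,\tau(K)\tau(L) \geq 1$. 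Theorem \ref{sum_of_Cantor_set} then shows $\widetilde A + \widetilde B$ is a finite union of closed intervals, so $A\cdot B$ is as well, and taking the union over the finitely many pairs $(i,j)$ finishes the argument.

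The hard part is the subdivision. Both passing to a sub-Cantor-set and taking logarithms can genuinely lower thickness — for instance $\tau(K_+)$ can fall far below $\tau(K)$ when $K$ has a short bridge adjacent to a long gap near $0$ — so a naive split into positive and negative parts is not enough. The point is that splitting \emph{only at large gaps} pushes every piece's thickness back up to $\geq\tau(K)$, while also making the pieces short enough that Corollary \ref{so_easy} recovers all but an arbitrarily small factor $c$ of that thickness under $\log$, with $c$ chosen in advance so that $c^2\tau(K)\tau(L)\geq 1$. Everything past that is bookkeeping of signs and of finite unions.
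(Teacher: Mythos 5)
Your proof is correct and follows essentially the same route as the paper's: decompose $K$ and $L$ at their largest gaps into finitely many small, single-signed pieces of thickness at least $\tau(K)$ resp.\ $\tau(L)$ (the mechanism of Lemma \ref{straightforward}), control the thickness of the logarithms via Corollary \ref{so_easy}, and conclude with the sum theorem. You supply more of the details the paper leaves implicit (the explicit choice of $c$ with $c^2\tau(K)\tau(L)\geq 1$ and the verification of the subdivision's thickness bound), and your citation of Theorem \ref{sum_of_Cantor_set} rather than Theorem \ref{sum_sum_sum} avoids having to check the gap-versus-diameter hypotheses, but the argument is the same.
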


\begin{proof}
By Lemma \ref{straightforward}, there exist Cantor sets $K_1, K_2, \cdots, K_{m}$ such that 
\begin{itemize}
\item[(1)] $K = K_1 \sqcup \cdots \sqcup K_{m}$; 
\item[(2)] $\tau(K_i) \geq \tau(K) \ \ (i = 1, 2, \cdots, m)$;
\item[(3)] either $K_i \subset (0, \infty)$ or $K_i \subset (-\infty, 0)$; 
\item[(4)] $|K_i|$ is sufficiently small. 
\end{itemize}
Write $L = L_1 \sqcup \cdots \sqcup L_{n}$ analogously. By Theorem \ref{sum_sum_sum} and Corollary \ref{so_easy},   
$K_i \cdot L_j \ \ (i = 1, 2, \cdots, m, j = 1, 2, \cdots, n)$ is a union of finitely many closed intervals. Therefore, 
$K \cdot L$ is again a union of finitely many closed intervals. The result follows from this. 
\end{proof}

\begin{cor}
Let $K$ and $L$ be Cantor sets with $\tau(K) \cdot \tau(L) > 1$.  
Suppose that $K$ or $L$ contain $0$. Then, one of the following occurs: 
\begin{itemize}
\item[(1)] $(K \cdot L) \cap [0, \infty)$ is the disjoint union of finitely many closed intervals; 
\item[(2)] $( K \cdot L ) \cap [0, \infty)$ is the disjoint union of $\{0\}$ and countably many closed intervals which accumulate to $0$.
\end{itemize}
\end{cor}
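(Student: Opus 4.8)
The plan is to reduce everything to Lemma \ref{product} through the observation that, although $K\cdot L$ is badly behaved near $0$, its intersection with $[\epsilon,\infty)$ for a fixed $\epsilon>0$ only involves factors bounded away from $0$. Indeed, since $K$ and $L$ are compact we may fix $M$ with $K,L\subset[-M,M]$; then a product $kl\geq\epsilon$ forces $|k|,|l|\geq\delta:=\epsilon/M$. Choosing $\delta$ to lie in gaps of both $K$ and $L$ (possible as these sets are nowhere dense), we obtain
\[
(K\cdot L)\cap[\epsilon,\infty)=\Big(\big(K\cap[\delta,\infty)\big)\cdot\big(L\cap[\delta,\infty)\big)\ \cup\ \big(K\cap(-\infty,-\delta]\big)\cdot\big(L\cap(-\infty,-\delta]\big)\Big)\cap[\epsilon,\infty),
\]
the negative--negative products being positive. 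Each of the four restricted sets is a clopen piece of a Cantor set, hence a Cantor set avoiding $0$, and its thickness is at least that of the ambient set: this is immediate from the definition of thickness (cf.\ Lemma \ref{straightforward}), since passing to a clopen subset only deletes pairs of gaps from the infimum. In particular the thickness products are still $>1$, so Lemma \ref{product} applies and shows that $(K\cdot L)\cap[\epsilon,\infty)$ is a finite union of closed intervals for every $\epsilon>0$ (an empty restriction simply contributing nothing).

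With this in hand I would first record that $K\cdot L$ is compact, as the continuous image of $K\times L$, and that $0\in K\cdot L$ since by hypothesis $0\in K$ or $0\in L$; thus $(K\cdot L)\cap[0,\infty)$ is compact and contains $0$. Next I would examine the connected components of $(K\cdot L)\cap(0,\infty)$. Any component meeting $[\epsilon,\infty)$ is a component of a finite union of closed intervals, hence a closed interval, and for each fixed $\epsilon>0$ only finitely many components meet $[\epsilon,\infty)$. Letting $\epsilon=1/n\to0$ shows there are at most countably many components; moreover no $c>0$ can be an accumulation point of components, for otherwise $(K\cdot L)\cap[\epsilon,\infty)$ with $\epsilon<c$ would already contain infinitely many components, contradicting its finiteness. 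Hence the components can accumulate only at $0$.

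The dichotomy now follows. If there are finitely many components, then together with the point $\{0\}$ the set $(K\cdot L)\cap[0,\infty)$ is a disjoint union of finitely many closed intervals, which is case (1). If there are infinitely many, they form countably many closed intervals that accumulate only at $0$, and adjoining $\{0\}$ gives case (2). The main obstacle is precisely the reduction step of the first paragraph: one must be careful that restricting $K$ and $L$ to $\{\,|x|\geq\delta\,\}$ yields genuine Cantor sets whose thickness does not drop below the original, so that the hypothesis $\tau(\cdot)\,\tau(\cdot)>1$ survives and Lemma \ref{product} can be invoked, and that these finite--union descriptions are consistent as $\epsilon\to0$ so that accumulation is forced to occur at $0$ alone. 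Once this uniform control is secured, the component count and the split into the finite and countably infinite cases are routine.
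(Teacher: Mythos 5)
Your overall architecture (localize away from the origin, invoke Lemma \ref{product}, then count components and show they can accumulate only at $0$) is sound, but the key reduction step contains a genuine error: it is \emph{not} true that the clopen restriction $K\cap[\delta,\infty)$ obtained by cutting $K$ at an arbitrary gap has thickness at least $\tau(K)$, and it is not true that "passing to a clopen subset only deletes pairs of gaps from the infimum." Cutting at a gap $(a,b)$ replaces that gap by the infinite gap $(-\infty,b)$, so the infimum defining the thickness of the restriction now contains, for every finite gap $U_2$ to the right of $b$, the single term $(U_2^{L}-b)/|U_2|$, whereas in $K$ the pair $\bigl((a,b),U_2\bigr)$ contributed the larger quantity $\max\bigl\{(U_2^{L}-b)/(b-a),\,(U_2^{L}-b)/|U_2|\bigr\}$. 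When $|U_2|>b-a$ the surviving term can fall far below $\tau(K)$: a set with a gap of length $10^{-6}$, followed by a bridge of length $0.1$, followed by a gap of length $0.3$ can be arranged to have arbitrarily large thickness, yet cutting at the tiny gap yields a set of thickness at most $1/3$. This is precisely why Lemma \ref{straightforward} is stated only for a gap of \emph{maximal} size. Since your cut point $\delta=\epsilon/M$ has nothing to do with the gap structure of $K$ or $L$, the hypothesis $\tau(\cdot)\,\tau(\cdot)>1$ may genuinely fail for the restricted sets, and the appeal to Lemma \ref{product} collapses at that point.

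The step can be repaired by replacing the single cut with the decomposition used in the proof of Lemma \ref{product}: split $K=\bigsqcup_i K_i$ and $L=\bigsqcup_j L_j$ by repeatedly removing gaps of maximal size, so that $\tau(K_i)\geq\tau(K)$, $\tau(L_j)\geq\tau(L)$, and all diameters are less than $\epsilon/(2M)$. At most one piece of each set contains $0$; any product $K_i\cdot L_j$ in which a factor contains $0$ lies in $[-\epsilon/2,\epsilon/2]$ and so misses $[\epsilon,\infty)$, while every other product $K_i\cdot L_j$ is a finite union of closed intervals by Lemma \ref{product}. This gives the finiteness of $(K\cdot L)\cap[\epsilon,\infty)$ that you need, after which your component count and the dichotomy go through essentially unchanged (with the minor observation that a component of $(K\cdot L)\cap(0,\infty)$ whose infimum is $0$ is half-open but becomes a closed interval once $\{0\}$ is adjoined, since $K\cdot L$ is closed).
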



\section{Estimates of thickness}\label{7689}

\begin{lem}\label{aaaa}
Let us define a function $f_{k} : \mathbb{R}^+ \to \mathbb{R}^+$ by 
\begin{equation*}
f_{k}(x) = \frac{ \log \left( 1 + \frac{kx}{1 + x} \right) }{ \log (1 + x) },
\end{equation*}
where $k$ is a positive real number. Then $f_{k}$ is a strictly decreasing function. 
In particular, since $\lim_{x \to 0^+} f_k(x) = k$, we have $f_{k}(x) < k$ for all $x \in \mathbb{R}^+$. 
\end{lem}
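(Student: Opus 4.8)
The plan is to reduce the claim to a statement about concavity via the substitution $t = \log(1+x)$, which is a strictly increasing bijection of $\mathbb{R}^+$ onto itself. Under this change of variable $\frac{kx}{1+x} = k\left(1 - \frac{1}{1+x}\right) = k(1 - e^{-t})$, so that
\[
f_k(x) = \frac{\log\left(1 + k(1-e^{-t})\right)}{t} =: \frac{\phi(t)}{t}, \qquad \phi(t) = \log\left(1 + k(1-e^{-t})\right).
\]
Since $x \mapsto t$ is strictly increasing, $f_k$ is strictly decreasing in $x$ if and only if $F(t) := \phi(t)/t$ is strictly decreasing in $t$ on $\mathbb{R}^+$, so it suffices to handle $F$.

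Next I would invoke the elementary principle that if $\phi(0) = 0$ and $\phi$ is strictly concave on $[0,\infty)$, then the secant slope $\phi(t)/t = \frac{\phi(t)-\phi(0)}{t-0}$ is strictly decreasing. This is immediate from the mean value theorem: writing $\phi(t) = \phi'(\xi)\,t$ with $\xi \in (0,t)$ and using that strict concavity makes $\phi'$ strictly decreasing gives $\phi(t) = \phi'(\xi) t > \phi'(t) t$, hence $\frac{d}{dt}\frac{\phi(t)}{t} = \frac{\phi'(t)t - \phi(t)}{t^2} < 0$. Thus the whole problem collapses to verifying that $\phi$ is strictly concave.

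The concavity is a direct computation. We have $\phi(0) = \log 1 = 0$, and with $w(t) = (1+k) - ke^{-t}$ one gets $\phi'(t) = ke^{-t}/w(t)$. Differentiating once more and using $w'(t) = ke^{-t}$ together with $w(t) + ke^{-t} = 1+k$ yields
\[
\phi''(t) = -\frac{ke^{-t}\,(1+k)}{w(t)^2}.
\]
For $t > 0$ we have $e^{-t} < 1$, so $w(t) = (1+k) - ke^{-t} > 1 > 0$, whence $\phi''(t) < 0$. This establishes strict concavity, and the preceding step gives that $F$, and therefore $f_k$, is strictly decreasing.

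Finally, the limit $\lim_{x\to 0^+} f_k(x) = k$ falls out of the same picture: as $x\to 0^+$ we have $t \to 0^+$, and since $\phi(0)=0$,
\[
\lim_{t\to 0^+}\frac{\phi(t)}{t} = \phi'(0) = \frac{k}{(1+k) - k} = k.
\]
Combined with monotonicity this gives $f_k(x) < k$ for all $x \in \mathbb{R}^+$. The only mildly delicate points are transferring monotonicity cleanly through the substitution and the sign bookkeeping in $\phi''$; everything else is routine. If one preferred to avoid the change of variable, one could instead differentiate $f_k$ directly and reduce $f_k'(x) < 0$ to the inequality $\frac{g'(x)}{g(x)} < \frac{h'(x)}{h(x)}$, where $g$ is the numerator and $h(x) = \log(1+x)$, but the substitution above exposes the underlying concavity structure far more transparently and is the route I would take.
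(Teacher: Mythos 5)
Your proof is correct, but it takes a different route from the paper's. The paper differentiates $f_k$ directly, isolates the numerator
\[
g_{k}(x) = (1+k)\log( 1 + x ) - \Bigl( 1 + \tfrac{kx}{1 + x} \Bigr) \log \left( 1 + x + kx \right),
\]
and concludes $g_k<0$ from $g_k(0)=0$ together with $g_k'(x) = -\,k \log(1+x+kx)/(1+x)^{2} < 0$; the limit statement is not derived, only asserted. You instead substitute $t=\log(1+x)$ to rewrite $f_k$ as the secant slope $\phi(t)/t$ of the function $\phi(t)=\log\bigl(1+k(1-e^{-t})\bigr)$ with $\phi(0)=0$, and reduce everything to the strict concavity $\phi''(t) = -k e^{-t}(1+k)/w(t)^2<0$. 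Your computations check out ($w+ke^{-t}=1+k$ is used correctly, and $w(t)>1$ for $t>0$), and the mean-value argument that a strictly concave function vanishing at $0$ has strictly decreasing secant slopes is sound. What your approach buys is conceptual transparency: it explains \emph{why} the lemma is true (it is exactly the statement that chords of a concave curve through the origin flatten out) and it yields the limit $\lim_{x\to 0^+}f_k(x)=\phi'(0)=k$ as part of the same computation rather than as a separate assertion. The paper's version is shorter to write down and avoids the change of variables, at the cost of an unmotivated auxiliary function $g_k$. Both are complete proofs; either would serve.
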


\begin{proof}
We have 
\begin{equation*}
f_{k}'(x) = \frac{   (1+k)\log( 1 + x ) - \left( 1 + \frac{kx}{1 + x} \right) \log \left( 1 + x + kx \right)  }
{ ( 1 + x + kx ) \left\{ \log (1 + x) \right\}^{2}  }. 
\end{equation*}
Let us denote the numerator by $g_{k}(x)$. Then, since $g_{k}(0) = 0$ and 
\begin{equation*}
g_{k}'(x) = -\frac{ k \log ( 1 + x + kx )  }{ (1+x)^{2} } < 0,
\end{equation*}
we get $g_{k}(x) < 0$. This implies $f_{k}'(x) < 0$.
\end{proof}

\begin{dfn}\label{defdef}
Let $K$ be a Cantor set with $K_{+} \neq \phi$, and let $C$ be a positive number.
If a gap $U$ of $K_{+}$ satisfies
\begin{equation}\label{ske}
\frac{ U^{L} }{ |U| } \geq C,
\end{equation}
we call $U$ a \emph{$C$-nice gap} of $K_+$, 
and a \emph{$C$-bad gap} of $K_+$ otherwise. If the inequality (\ref{ske}) is in fact equality, we call 
$U$ a \emph{$C$-gap} of $K_+$. 
Furthermore, we call $\widetilde{U}$ a \emph{$\log$-$C$-nice gap} of $\widetilde{K_+}$ if $U$ satisfies (\ref{ske}). 
Define a \emph{$\log$-$C$-bad gap} of $\widetilde{K_+}$, and 
a \emph{$\log$-$C$-gap} of $\widetilde{K_+}$ analogously. 
If $U_{1}$, 
$U_{2}$ are $C$-bad gaps of $K_{+}$ which satisfy
\begin{itemize}
\item[(1)] $U_{1} < U_{2}$; 
\item[(2)] every gap contained in $(U^{R}_{1}, U^{L}_{2})$ is a $C$-nice gap of $K_+$; 
\end{itemize}
we call the Cantor set $V = [U^{R}_{1}, U^{L}_{2}] \cap K_{+}$ a \emph{$C$-nice Cantor set} of $K_+$, and 
$\widetilde{V}$ a \emph{$\log$-$C$-nice Cantor set} of $\widetilde{K_+}$.
If
\begin{itemize}
\item[(1)] $K$ is a $0^{+}$-Cantor set; 
\item[(2)] there is no $C$-bad gap of $K_+$ in $(0, K^R)$; 
\end{itemize}
we call $K$ a \emph{$C$-nice $0^{+}$-Cantor set}, 
and $\widetilde{K_+}$ a 
\emph{$\log$-$C$-nice extended Cantor set}.
\end{dfn}

\begin{lem}\label{h}
Let $K$ be a Cantor set with $K_+ \neq \phi$, and let $C$ be a positive number. 
Assume that $U_{1}$ and $U_{2}$ are gaps of $K_{+}$ such that 
\begin{itemize}
\item[(1)] $U_{1} < U_{2}, \  |U_{1}| \leq |U_{2}|$; 
\item[(2)] $U_{1}$  is a  $C$-nice gap of $K_+$.
\end{itemize}
Then, we have 
\begin{equation*}
\frac{\widetilde{U_2}^{L} - \widetilde{U_1}^{R} }{ |\widetilde{U_1}| } \geq 
\frac{  \log \left( 1 + \frac{\tau(K)}{1 + C} \right)  }{ \log \left( 1 + \frac{1}{C} \right) }.
\end{equation*}
\end{lem}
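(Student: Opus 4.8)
The plan is to rewrite the quantities attached to $\widetilde{U_1}$ and $\widetilde{U_2}$ in terms of the three endpoints $a = U_1^L$, $b = U_1^R$, $c = U_2^L$, and then reduce the whole inequality to the monotonicity statement of Lemma \ref{aaaa}. Unwinding $\widetilde{U} = \log U$, the left-hand side is
$$\frac{\widetilde{U_2}^L - \widetilde{U_1}^R}{|\widetilde{U_1}|} = \frac{\log c - \log b}{\log b - \log a} = \frac{\log(c/b)}{\log(b/a)}.$$
Setting $x = (b-a)/a > 0$, the $C$-nice hypothesis on $U_1$, namely $U_1^L/|U_1| = a/(b-a) \geq C$, is exactly the constraint $x \leq 1/C$; it also records the denominator as $\log(b/a) = \log(1+x)$.

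The only genuinely geometric input is a lower bound for $c - b$, and this is the step I expect to be the crux. Since $U_1$ and $U_2$ are gaps of $K_+ \subset (0,\infty)$ they are also gaps of $K$, so the definition of thickness (Definition \ref{thickness_def}) applied to the pair $U_1 < U_2$ gives $\max\{(c-b)/|U_1|,\,(c-b)/|U_2|\} \geq \tau(K)$. Here the hypothesis $|U_1| \leq |U_2|$ is essential: because $c - b > 0$ it forces the first term to dominate, so the maximum equals $(c-b)/(b-a)$, whence $c - b \geq \tau(K)(b-a)$. Without the ordering of the gap sizes one would only control the weaker ratio $(c-b)/|U_2|$, which would not suffice.

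Finally I would assemble the pieces. From $c \geq b + \tau(K)(b-a)$ together with $(b-a)/b = x/(1+x)$ one obtains $c/b \geq 1 + \tau(K)\,x/(1+x)$, and since $\log$ is increasing,
$$\frac{\log(c/b)}{\log(b/a)} \geq \frac{\log\!\left(1 + \frac{\tau(K)\,x}{1+x}\right)}{\log(1+x)} = f_{\tau(K)}(x),$$
where $f_k$ is the function of Lemma \ref{aaaa}. That lemma gives $f_{\tau(K)}$ strictly decreasing, and since $x \leq 1/C$ we conclude $f_{\tau(K)}(x) \geq f_{\tau(K)}(1/C)$. A direct simplification of $f_{\tau(K)}(1/C)$ produces $\log\!\left(1 + \frac{\tau(K)}{1+C}\right) \big/ \log\!\left(1 + \frac1C\right)$, which is precisely the asserted bound.
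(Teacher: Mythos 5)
Your argument is correct and is essentially the paper's own proof: the same substitution $x=|U_1|/U_1^L\leq 1/C$, the same thickness bound $U_2^L-U_1^R\geq\tau(K)|U_1|$ extracted from $|U_1|\leq|U_2|$, and the same appeal to the monotonicity of $f_{\tau(K)}$ from Lemma \ref{aaaa}. No issues.
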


\begin{proof}
Let us write 
\begin{equation*}
x = \frac{|U_{1}|}{U_{1}^{L}}.
\end{equation*}
Note that $x \leq \frac{1}{C}$. By Lemma \ref{aaaa}, we have 
\begin{equation*}
\begin{aligned}
\frac{\widetilde{U_2}^{L} - \widetilde{U_1}^{R} }{ |\widetilde{U_1}| } &\geq  
\frac{  \log \left( U^{L}_{1} + |U_{1}| + \tau(K) |U_{1}|   \right) - 
\log \left( U^{L}_{1} + | U_{1} | \right) }{ \log \left( U^{L}_{1} + |U_{1}| \right)  - \log U^{L}_{1} } \\
& = \frac{  \log \left(  1 + \frac{ \tau(K) x }{ 1 + x }   \right)  }{  \log \left( 1 + x \right)  } \\
&\geq \frac{  \log \left(  1 + \frac{ \tau(K) \frac{1}{C} }{ 1 + \frac{1}{C} }   \right)  }{  \log \left( 1 + \frac{1}{C} \right)  } 
= \frac{ \log \left( 1 + \frac{\tau(K)}{1 + C} \right)  }{ \log \left( 1 + \frac{1}{C} \right) }.
\end{aligned}
\end{equation*}
\end{proof}

\begin{lem}\label{gegege}
Let $K \geq 0$ be a Cantor set, and let $C$ be a positive number. Suppose that every gap of $K$ is 
a $C$-nice gap. Then 
\begin{equation*}
\tau( \widetilde{K_+} ) \geq \frac{ \log \left( 1 + \frac{\tau(K)}{1 + C} \right)  }{ \log \left( 1 + \frac{1}{C} \right) }.
\end{equation*}
\end{lem}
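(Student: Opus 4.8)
The plan is to estimate $\tau(\widetilde{K_+})$ directly from Definition \ref{thickness_def}. Since $\log$ is an increasing homeomorphism of $(0,\infty)$ onto $\mathbb{R}$, it carries the gaps of $K_+$ bijectively and order-preservingly onto the gaps of $\widetilde{K_+}$, so every pair of gaps of $\widetilde{K_+}$ is of the form $\widetilde{U_1} < \widetilde{U_2}$ for gaps $U_1 < U_2$ of $K_+$. As $\tau(\widetilde{K_+})$ is the infimum over such pairs of $\max\{(\widetilde{U_2}^L - \widetilde{U_1}^R)/|\widetilde{U_1}|, (\widetilde{U_2}^L - \widetilde{U_1}^R)/|\widetilde{U_2}|\}$, it suffices to show that for \emph{every} such pair this maximum is at least the claimed bound. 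Because the maximum dominates each of its two arguments, for each pair I only need to exhibit \emph{one} of the two ratios that already exceeds the bound. Note that by hypothesis every gap of $K_+$ is a $C$-nice gap, so in particular both $U_1$ and $U_2$ are $C$-nice, and the hypotheses of Lemma \ref{h} will be available whenever the size condition is met.

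First I would split according to which of $|U_1|, |U_2|$ is smaller. If $|U_1| \leq |U_2|$, then $U_1$ is a $C$-nice gap with $|U_1| \leq |U_2|$, and Lemma \ref{h} immediately gives $(\widetilde{U_2}^L - \widetilde{U_1}^R)/|\widetilde{U_1}| \geq \log(1 + \tau(K)/(1+C))/\log(1 + 1/C)$; since the maximum is at least this first ratio, the pair is handled.

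The remaining case $|U_2| \leq |U_1|$ is the one not covered by Lemma \ref{h}, and it is the main obstacle: here I must instead bound the \emph{right} ratio $(\widetilde{U_2}^L - \widetilde{U_1}^R)/|\widetilde{U_2}|$. In this case the thickness inequality applied to the pair $U_1 < U_2$ gives $U_2^L - U_1^R \geq \tau(K)\,|U_2|$, hence $U_1^R \leq U_2^L - \tau(K)|U_2|$. Writing $y = |U_2|/U_2^L$, which satisfies $y \leq 1/C$ because $U_2$ is $C$-nice, a short computation reduces the right ratio to the lower bound $-\log(1 - \tau(K)y)/\log(1+y)$. I would then compare this with $f_{\tau(K)}(y)$: the elementary inequality $1/(1 - ky) \geq (1 + (1+k)y)/(1+y)$, which rearranges to $k(1+k)y^2 \geq 0$, shows that $-\log(1 - \tau(K)y) \geq \log(1 + \tau(K)y/(1+y))$, so the right ratio is at least $f_{\tau(K)}(y)$. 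Finally, since $f_{\tau(K)}$ is decreasing (Lemma \ref{aaaa}) and $y \leq 1/C$, I obtain $f_{\tau(K)}(y) \geq f_{\tau(K)}(1/C) = \log(1 + \tau(K)/(1+C))/\log(1 + 1/C)$, which is exactly the claimed bound.

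Having verified that in either case at least one of the two ratios exceeds the bound, the maximum over the pair does as well; taking the infimum over all pairs of gaps then yields $\tau(\widetilde{K_+}) \geq \log(1 + \tau(K)/(1+C))/\log(1+1/C)$. The only genuinely new ingredient beyond Lemma \ref{h} is the right-gap estimate in the second case, and the crux there is the algebraic comparison that lets the right ratio be dominated by $f_{\tau(K)}(y)$, so that the monotonicity from Lemma \ref{aaaa} can finish the argument.
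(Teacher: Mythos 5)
Your proposal is correct and follows essentially the same route as the paper: the same reduction to pairs of gaps $\widetilde{U_1}<\widetilde{U_2}$, the same case split on which of $|U_1|,|U_2|$ is larger, and the same appeal to Lemma \ref{h} when $|U_1|\leq|U_2|$. The only difference is in the second case, where the paper observes via the mean value theorem that $|\widetilde{U_1}|>|\widetilde{U_2}|$ and then uses the crude bound (right ratio) $>\tau(K)>f_{\tau(K)}(1/C)$ from Lemma \ref{aaaa}, whereas you estimate the right ratio directly by $-\log\left(1-\tau(K)y\right)/\log(1+y)\geq f_{\tau(K)}(y)\geq f_{\tau(K)}(1/C)$ --- an equally valid (if slightly more computational) argument, in which you should just note explicitly that $1-\tau(K)y>0$ since $0<U_1^R\leq U_2^L\left(1-\tau(K)y\right)$.
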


\begin{rem}
If $0 \in K$, then $\widetilde{K_+}$ is an extended Cantor set. 
\end{rem}

\begin{proof}
Let $U_{1}$ and $U_{2}$ be gaps of $K$ with $U_{1} < U_{2}$. \vspace{1mm} \\
Case 1) \ Suppose that $|U_{1}| \leq |U_{2}|$. By Lemma \ref{h}, we have 
\begin{equation*}
\begin{aligned}
\max \left\{ \frac{\widetilde{U_2}^L - \widetilde{U_1}^R}{| \widetilde{U_1} |},  
\frac{\widetilde{U_2}^{L} - \widetilde{U_1}^{R}}{| \widetilde{U_2} |} \right\} 
&\geq \frac{\widetilde{U_2}^L - \widetilde{U_1}^R}{| \widetilde{U_1} |} \\
&\geq \frac{ \log \left( 1 + \frac{\tau(K)}{1 + C} \right)  }{ \log \left( 1 + \frac{1}{C} \right) }.
\end{aligned}
\end{equation*}
Case 2) \ Suppose that $|U_{1}| > |U_{2}|$. By the mean value theorem, we have 
$| \widetilde{U_1} | > | \widetilde{U_2} |$. Therefore, 
\begin{equation*}
\begin{aligned}
\max \left\{ \frac{\widetilde{U_2}^L - \widetilde{U_1}^R}{| \widetilde{U_1} |},  
\frac{\widetilde{U_2}^{L} - \widetilde{U_1}^{R}}{| \widetilde{U_2} |} \right\} 
&= \frac{\widetilde{U_2}^L - \widetilde{U_1}^R}{| \widetilde{U_2} |} \\
&> \tau(K) \\
&> \frac{ \log \left( 1 + \frac{\tau(K)}{1 + C} \right)  }{ \log \left( 1 + \frac{1}{C} \right) }.
\end{aligned}
\end{equation*}
It follows that 
\begin{equation*}
\begin{aligned}
\tau(\widetilde{K_+}) &= \inf_{U_{1}<U_{2}} \max  \left\{ \frac{\widetilde{U_2}^L - \widetilde{U_1}^R}{| \widetilde{U_1} |},  
\frac{\widetilde{U_2}^{L} - \widetilde{U_1}^{R}}{| \widetilde{U_2} |} \right\}  \\
&\geq \frac{ \log \left( 1 + \frac{\tau(K)}{1 + C} \right)  }{ \log \left( 1 + \frac{1}{C} \right) }.
\end{aligned}
\end{equation*}
\end{proof}

\begin{lem}\label{akb}
Let $K$ be a Cantor set with $K_{+} \neq \phi$, and let $C$ be a positive number. 
Assume that $V$ is a $\log$-$C$-nice Cantor set of $\widetilde{K_+}$. Then, 
\begin{equation*}
\begin{aligned}
(1)& \ \ |V| > \log \left( 1 + \frac{\tau(K)}{1 + C} \right); \\
(2)& \ \ \tau( V) \geq \frac{  \log \left( 1 + \frac{\tau(K)}{1 + C} \right)  }{ \log \left( 1 + \frac{1}{C} \right) };  \\
(3)& \ \ \text{for any gap } U \text{ of }  V, \ |U| \leq \log \left( 1 + \frac{1}{C} \right).
\end{aligned}
\end{equation*}
\end{lem}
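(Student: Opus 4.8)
The plan is to treat the three claims in the order (3), (2), (1): the first is an immediate computation, the second is the estimate of Lemma \ref{gegege} applied in the present setting, and only the first claim, on $|V|$, requires genuinely new work. Throughout I write $V = \widetilde{W}$, where $W = [U_1^R, U_2^L] \cap K_+$ is the $C$-nice Cantor set underlying $V$; thus $U_1 < U_2$ are consecutive $C$-bad gaps of $K_+$, every gap of $W$ is a $C$-nice gap of $K_+$, and $W^L = U_1^R$, $W^R = U_2^L$, so that $V^L = \widetilde{U_1}^R$ and $V^R = \widetilde{U_2}^L$. For (3), I would note that every gap of $V$ is $\widetilde{U}$ for some gap $U$ of $W$, necessarily a $C$-nice gap, so that $U^L/|U| \ge C$; then $|\widetilde{U}| = \log(1 + |U|/U^L) \le \log(1 + 1/C)$ because $|U|/U^L \le 1/C$, which is exactly (3).

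For (2), I would observe that the gaps of $V$ are precisely the images under $\log$ of the gaps of $W$, all of which are $C$-nice gaps of $K_+$. Taking any two of them and repeating the proof of Lemma \ref{gegege} word for word — invoking Lemma \ref{h} when the left gap is no larger than the right one (which is permitted since it is a $C$-nice gap of $K_+$, and which produces the original $\tau(K)$ in the bound), and the mean value theorem in the opposite case — bounds the larger of the two thickness ratios below by $\log(1 + \tau(K)/(1+C))/\log(1+1/C)$. Passing to the infimum yields (2). Equivalently, one applies Lemma \ref{gegege} directly to $W$ and uses $\tau(W) \ge \tau(K)$ together with monotonicity of the bound in the thickness.

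The substance is in (1). Since $|V| = \log(U_2^L/U_1^R)$, it suffices to show $U_2^L/U_1^R > 1 + \tau(K)/(1+C)$. Writing $d = U_2^L - U_1^R$, the thickness of $K$ applied to the pair $U_1 < U_2$ gives $\max\{d/|U_1|, d/|U_2|\} \ge \tau(K)$, while the $C$-bad condition on $U_1$ gives $U_1^R = U_1^L + |U_1| < (1+C)|U_1|$. When $|U_1| \le |U_2|$ the estimate is straightforward: $d \ge \tau(K)|U_1|$ immediately yields $d/U_1^R > \tau(K)/(1+C)$. The hard case is $|U_1| > |U_2|$, where the thickness inequality controls $d$ only through the smaller gap $|U_2|$; here I would additionally use that $U_2$ is a $C$-bad gap, so $U_2^L < C|U_2| \le Cd/\tau(K)$, and hence $U_1^R = U_2^L - d < \tfrac{C - \tau(K)}{\tau(K)}\,d$. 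Positivity of $U_1^R$ then forces $C > \tau(K)$ (so this case is in fact vacuous when $C \le \tau(K)$), and one concludes $d/U_1^R > \tau(K)/(C - \tau(K)) > \tau(K)/(1+C)$, the last step because $C - \tau(K) < 1 + C$.

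The main obstacle is exactly this second case of (1): the defining thickness inequality only bounds the bridge $d$ in terms of $\min(|U_1|,|U_2|)$, so when the left gap is the larger one the obvious estimate is lost, and the argument must bring in the $C$-bad condition on the right gap $U_2$ to recover control of the left endpoint $U_1^R = W^L$. The remaining pieces, (2) and (3), are essentially bookkeeping on top of Lemma \ref{gegege} and Lemma \ref{h}.
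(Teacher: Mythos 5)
Your proposal is correct and follows essentially the same route as the paper: (3) is the direct computation, (2) is Lemma \ref{gegege} applied to the underlying $C$-nice Cantor set, and (1) in the case $|U_1|\le|U_2|$ combines the thickness bound $U_2^L-U_1^R\ge\tau(K)|U_1|$ with the $C$-bad estimate $U_1^R<(1+C)|U_1|$, exactly as in the paper. The only difference is that you work out the case $|U_1|>|U_2|$ (which the paper dismisses as ``similar''), and your treatment of it --- using the $C$-bad condition on $U_2$ to get $U_1^R<\frac{C-\tau(K)}{\tau(K)}d$ and noting the case is vacuous unless $C>\tau(K)$ --- is a valid way to close that case.
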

\begin{proof}
Let $U_{1}, U_{2}$ be the $C$-bad gaps of $K_{+}$ such that 
\begin{equation*}
U_{1} < U_{2}, \ \widetilde{U_1}^R = V^{L}, \ \text{and \ } \widetilde{U_2}^L = V^{R}.
\end{equation*}
If $|U_1| \leq |U_2|$, we get 
\begin{equation*}
\begin{aligned}
|V| = \log \left( 1 + \frac{ U_{2}^{L} - U_{1}^{R} }{ U_{1}^{R} } \right) &>
\log \left(  1 + \frac{ U_{2}^{L} - U_{1}^{R} }{ (1 + C) |U_{1}| }  \right) \\
&\geq \log \left( 1 + \frac{\tau(K)}{1 + C} \right). 
\end{aligned}
\end{equation*}
The other case can be shown similarly. This proves (1). Lemma \ref{gegege} implies (2), and (3) is straightforward. 
\end{proof}

Recall that a Cantor set $K$ is called a $0^{\times}$-Cantor set if $K_{+}, K_{-} \neq \phi$, and $0 \notin K$. 
\begin{dfn}
Let $C$ be a positive number and 
let $K$ be a $0^{\times}$-Cantor set. Assume that $U_{1}, U_{2}$ are the gaps of $K$ which satisfy 
\begin{equation*}
\begin{aligned}
&(1) \ \ 0 \in U_{1}; \\
&(2) \ \ U_{1} < U_{2};  \\
&(3) \ \ \text{every gap in $(U^{R}_{1}, U_{2}^{L})$ is a $C$-nice gap of $K_+$. }
\end{aligned}
\end{equation*}
Then, we call the Cantor set $V = [U^{R}_{1}, U^{L}_{2}] \cap K$ a 
\emph{$C$-nice $0^{\times}$-Cantor set} of $K_+$, and $\widetilde{V}$ a 
\emph{$\log$-$C$-nice $0^{\times}$-Cantor set} of $\widetilde{K_+}$.
\end{dfn}

The following can be shown analogously to Lemma \ref{akb}. 

\begin{lem}\label{819819}
Let $K$ be a $0^\times$-Cantor set, and let $C$ be a positive number. 
Assume that $V$ is the $\log$-$C$-nice $0^{\times}$-Cantor set of $\widetilde{K_+}$.
Then,
\begin{equation*}
\begin{aligned}
(1)& \ \ |V| > \log \left( 1 + \tau(K) \right);  \\
(2)& \ \ \tau(V) \geq \frac{  \log \left( 1 + \frac{\tau(K)}{1 + C} \right)  }{ \log \left( 1 + \frac{1}{C} \right) };  \\
(3)& \ \ \text{for any gap } U \text{ of }  V, \ |U| \leq \log \left( 1 + \frac{1}{C} \right).
\end{aligned}
\end{equation*}
\end{lem}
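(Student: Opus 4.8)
The plan is to run the proof of Lemma~\ref{akb} essentially verbatim, the only structural change being the behaviour of the left boundary gap. Let $U_1,U_2$ be the gaps of $K$ with $0\in U_1$, $U_1<U_2$, $\widetilde{U_1}^{R}=V^{L}$ and $\widetilde{U_2}^{L}=V^{R}$, where $U_2$ is the first $C$-bad gap of $K_+$ lying to the right of $U_1$; thus $V$ is the image of $[U_1^{R},U_2^{L}]\cap K$ under $\log$ and $|V|=\log\bigl(U_2^{L}/U_1^{R}\bigr)$. The one new ingredient is that, because $0\in U_1$, we have $U_1^{L}<0$ and hence
\[
U_1^{R}=U_1^{L}+|U_1|<|U_1|.
\]
This is precisely where the estimate improves on Lemma~\ref{akb}: there $U_1$ was a $C$-bad gap of $K_+$ and only $U_1^{R}<(1+C)|U_1|$ was available, which is what produced the factor $1+C$ in the denominator; here that factor becomes $1$.

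For part (1) I would split along the two cases used in the proof of Lemma~\ref{gegege}. If $|U_1|\le|U_2|$, the thickness inequality applied to the pair $(U_1,U_2)$ gives $U_2^{L}-U_1^{R}\ge\tau(K)\,|U_1|$, so that
\[
|V|=\log\!\left(1+\frac{U_2^{L}-U_1^{R}}{U_1^{R}}\right)>\log\!\left(1+\frac{U_2^{L}-U_1^{R}}{|U_1|}\right)\ge\log\bigl(1+\tau(K)\bigr),
\]
the middle inequality being exactly $U_1^{R}<|U_1|$. In the remaining case $|U_1|>|U_2|$ the pair $(U_1,U_2)$ only yields $U_2^{L}-U_1^{R}\ge\tau(K)\,|U_2|$, and one has to recover the bound from the $C$-badness of $U_2$, i.e.\ from $U_2^{L}<C|U_2|$, as in the ``other case'' of Lemma~\ref{akb}. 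I expect this to be the main obstacle: the clean gain $U_1^{R}<|U_1|$ is attached to $|U_1|$, which is no longer the smaller of the two gaps, so it cannot be used directly, and one must check with care that combining $U_2^{L}-U_1^{R}\ge\tau(K)|U_2|$ with $U_2^{L}<C|U_2|$ still delivers the full strength $\log(1+\tau(K))$ rather than merely the weaker bound of Lemma~\ref{akb}.

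Parts (2) and (3) are routine and mirror Lemma~\ref{akb}. Every gap strictly inside $V$ is $\log$-$C$-nice by the definition of a $\log$-$C$-nice $0^{\times}$-Cantor set, so for any pair of gaps of $V$ the two-case argument of Lemma~\ref{gegege} applies unchanged (Case~1 through Lemma~\ref{h}, Case~2 through the mean value theorem), giving
\[
\tau(V)\ge\frac{\log\!\left(1+\frac{\tau(K)}{1+C}\right)}{\log\!\left(1+\frac{1}{C}\right)},
\]
which is (2). For (3), each gap of $V$ is $\widetilde{U'}$ for some $C$-nice gap $U'$ of $K_+$; since $C$-niceness means $|U'|/{U'}^{L}\le 1/C$, we get $|\widetilde{U'}|=\log\bigl({U'}^{R}/{U'}^{L}\bigr)=\log\bigl(1+|U'|/{U'}^{L}\bigr)\le\log(1+1/C)$, which is (3).
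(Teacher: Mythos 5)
The paper gives no written argument for this lemma (it is dismissed as ``analogous to Lemma~\ref{akb}''), so there is no alternative route to compare against; your plan is the intended one, your treatment of parts (2) and (3) is correct, and the observation $U_1^R=U_1^L+|U_1|<|U_1|$ (valid since $0\in U_1$ forces $U_1^L<0$) correctly yields $|V|>\log(1+\tau(K))$ when $|U_1|\le|U_2|$. The genuine gap is exactly where you flag it: you leave the case $|U_1|>|U_2|$ open, and pushing the combination of $U_2^L-U_1^R\ge\tau(K)|U_2|$ with the $C$-badness of $U_2$ will not, by itself, deliver the stated bound. The way to close the case is to show it is \emph{vacuous} whenever $C\le\tau(K)$: $C$-badness of $U_2$ means $|U_2|>U_2^L/C$, hence
\[
U_1^R=U_2^L-(U_2^L-U_1^R)\le U_2^L-\tau(K)\,|U_2|<\Bigl(1-\tfrac{\tau(K)}{C}\Bigr)U_2^L\le 0,
\]
contradicting $U_1^R>0$ (the left endpoint of $K_+$ is positive because $0\notin K$). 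Under $C\le\tau(K)$ only your Case~1 occurs and (1) follows.

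You should also note that some restriction on $C$ is genuinely needed, so the case analysis cannot be completed for arbitrary $C>0$: when $|U_1|>|U_2|$ and $C>\tau(K)$ the same computation only gives $U_2^L/U_1^R>C/(C-\tau(K))$, which is $\ge 1+\tau(K)$ precisely when $C\le 1+\tau(K)$, and for larger $C$ part (1) actually fails. For instance, take $K\cap(-\infty,0)$ a very thick Cantor set with right endpoint $-1$, $K_+\cap[5,20]$ and $K_+\cap[22,200]$ very thick Cantor sets, and the gap $(20,22)$ in between: then $\tau(K)=7.5$ (realized by the pair $\bigl((-1,5),(20,22)\bigr)$), every gap in $(5,20)$ is $11$-nice while $(20,22)$ is $11$-bad, and $|V|=\log(20/5)=\log 4<\log 8.5=\log(1+\tau(K))$. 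This does not damage the paper, because the lemma is only invoked with $C=C_{K,L}=\frac{\tau(K)+1}{\tau(K)\tau(L)-1}$, which is $<\tau(K)$ by Lemma~\ref{what}; but your proof must either add the hypothesis $C\le\tau(K)$ (closing Case~2 by the vacuity argument above) or explicitly restrict to that regime.
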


\begin{lem}\label{extended}
Let $K$ be an extended Cantor set. Then 
for any $\epsilon > 0$, there exist a decreasing sequence $\{ k_{n} \}$ 
which satisfies 
\begin{itemize}
\item[(1)] $k_{n} \to -\infty$; 
\item[(2)] $K \cap [ k_{n}, K^{R}]$ is a Cantor set with 
$\tau( K \cap [ k_{n}, K^{R} ] ) > \tau(K) - \epsilon$. 
\end{itemize}
\end{lem}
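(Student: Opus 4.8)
The plan is to exploit the special gap structure of an extended Cantor set and reduce the whole statement to controlling a single family of "leftmost bridge" ratios. Since $K$ is bounded above and unbounded below, its gaps are the finite (bounded) gaps together with the single infinite gap $(K^R,\infty)$; there is no infinite gap on the left. For a truncation $K_n := K\cap[k_n,K^R]$ — a genuine Cantor set once $k_n$ is finite — the gaps are the finite gaps of $K$ lying in $(k_n,K^R)$, the old infinite gap $(K^R,\infty)$, and one new infinite gap $(-\infty,t_n)$, where $t_n:=\min\big(K\cap[k_n,\infty)\big)$ is the new left endpoint.

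First I would observe that in the infimum defining $\tau(K_n)$, every pair of gaps that already existed in $K$ contributes a value $\ge\tau(K)$, because such a pair is among those over which the infimum defining $\tau(K)$ is taken; in particular the finite–finite pairs and the pairs involving $(K^R,\infty)$ are harmless. Hence the only pairs that can lower the thickness are the genuinely new ones, namely $\big((-\infty,t_n),\,U\big)$ for a finite gap $U$ of $K_n$, for which the relevant quantity is the leftmost-bridge ratio $(U^L-t_n)/|U|$. Thus the lemma reduces to choosing $t_n\to-\infty$ (realizable as left endpoints of truncations) so that $(U^L-t_n)/|U|>\tau(K)-\epsilon$ for every finite gap $U$ of $K$ lying to the right of $t_n$.

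Next I would characterize the dangerous cut points. Writing $\tau=\tau(K)$ and using that the natural left-bridge ratio of any gap is $\ge\tau$ (immediate from the definition of thickness, via the pair consisting of $U$ and the nearest gap to its left of size $\ge|U|$), one checks that a cut at $t$ harms $U$ — drops its ratio below $\tau-\epsilon$ — exactly when $t$ lies in the interval $\big(U^L-(\tau-\epsilon)|U|,\,U^L\big)$ just to the left of $U$. So it suffices to produce points $t\in K$ tending to $-\infty$ that avoid the union of these bad zones. The structural engine is the pairwise inequality $U_2^L-U_1^R\ge\tau\min(|U_1|,|U_2|)$ for gaps $U_1<U_2$, which forces large gaps to be well separated: if one cuts at the right endpoint of a gap $G$ that is at least as large as every gap lying close enough to its right, then gaps no larger than $G$ are dominated (ratio $\ge\tau$), while gaps larger than $G$ are, by the separation inequality, automatically far enough away to keep their ratio above $\tau-\epsilon$. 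Combined with the facts that in any bounded interval only finitely many gaps exceed a given size and that $K$ extends to $-\infty$, this should yield admissible gaps $G$ with right endpoints tending to $-\infty$.

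The main obstacle I anticipate is precisely this last existence step. The subtlety is the presence of "left-maximal" gaps — gaps larger than everything to their left, whose natural left bridge in $K$ is infinite — since any finite truncation shortens such a bridge, forcing one to cut at distance at least $(\tau-\epsilon)|U|$ to their left. One must verify that the constraints imposed by all left-maximal gaps to the right of a prospective cut point can be met simultaneously by points going to $-\infty$; this is exactly where the separation inequality does the real work, since it spaces consecutive left-maximal gaps apart by at least $(1+\tau)$ times the size of the smaller one. The remaining points — that only the pairs discussed can affect $\tau(K_n)$, the use of a spare $\epsilon/2$ to make the inequality strict, and the bookkeeping turning $t_n$ into an actual left endpoint $k_n$ — are routine.
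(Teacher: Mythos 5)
Your reduction is correct: the only pairs that can lower $\tau(K\cap[k_n,K^R])$ below $\tau(K)$ are those involving the new infinite gap $(-\infty,t_n)$, so the problem is exactly to place cut points $t_n\to-\infty$ outside the bad zones $\bigl(U^L-(\tau(K)-\epsilon)|U|,\,U^L\bigr)$ of those finite gaps $U$ that are not dominated by a gap of size $\ge|U|$ lying between $t_n$ and $U$. This matches the paper's strategy, and your device of cutting at the right endpoint of a locally maximal gap is essentially what the paper does in its two easy cases, where the quantity $A=\lim_{N\to-\infty}\sup\{|U| : U\subset(-\infty,N]\}$ is infinite or a finite positive number: there one cuts at right endpoints of gaps of nearly extremal size and loses only a factor $\frac{A-\epsilon_0}{A+\epsilon_0}$.

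However, the existence step --- which you correctly flag as the main obstacle --- is not resolved by the mechanism you propose, and this is a genuine gap. The separation inequality gives $U_2^L-U_1^R\ge\tau(K)\min\{|U_1|,|U_2|\}$, i.e.\ it spaces two gaps apart by a multiple of the \emph{smaller} one, whereas the bad zone of a left-maximal gap $V$ has length $(\tau(K)-\epsilon)|V|$, proportional to $V$ itself. Consecutive left-maximal gaps $V'<V$ satisfy $|V'|<|V|$, so the guaranteed separation $(1+\tau(K))|V'|$ can be arbitrarily small compared with $(\tau(K)-\epsilon)|V|$; the bad zone of $V$ can therefore swallow $V'$ together with its own bad zone, and these zones can a priori chain arbitrarily far to the left. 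This is exactly the hard case of the paper's proof (the case $A=0$, where gap sizes tend to $0$ as one goes to $-\infty$), and it is handled there not by a spacing argument but by contradiction: if every candidate cut point in some half-line $(-\infty,N]$ were bad, one extracts a chain of gaps $U_0<U_1<\cdots$ in which each bad witness forces $|U_n|\le\frac{\tau(K)-\epsilon}{\tau(K)}|U_{n+1}|$, so the sizes grow geometrically, while the accompanying estimate $U_{n+1}^R<U_0^R+(1+\tau(K)-\epsilon)\sum_{i}|U_{i+1}|$ confines the whole chain to a bounded region in which gap sizes are uniformly small --- a contradiction. Without this (or an equivalent quantitative argument), your proof is incomplete at its decisive step.
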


\begin{proof}
Without loss of generality, we can assume that $K^R = 0$.
Let 
\begin{equation*}
A = \lim_{N \to -\infty} \sup \left\{ |U| \mid U \text{ is a gap of $K$ contained in } (-\infty, N] \right\}.
\end{equation*}
Case 1) \ Suppose that $A = \infty$. 
We can find a sequence of gaps $\{ U_{n} \} \ ( n = 1, 2, \cdots )$ such that 
\begin{itemize}
\item[(a)] $| U_{n} | > | U |$ \ for every gap $U$ contained in $(U_{n}^{R}, 0)$; 
\item[(b)] $\lvert U_{n+1} \rvert > \lvert U_{n} \rvert$; 
\item[(c)] $U_{n+1} < U_{n}$. 
\end{itemize}
Let $k_{n} = U^{R}_{n} \ (n = 1, 2, \cdots )$. 
By the definition of thickness, it is easy to see that 
$\tau( K \cap [k_{n}, 0] ) \geq \tau( K )$. Therefore, $\{ k_{n} \}$ satisfies the desired properties. \vspace{2mm}\\
Case 2) \ Suppose that $0 < A < \infty$. 
Let us take $\epsilon_{0} > 0$ such that 
\begin{equation*}
\tau(K) \cdot \frac{ A - \epsilon_{0} }{ A + \epsilon_{0} }  > \tau(K) - \epsilon.
\end{equation*} 
Let us choose $N < 0$ which satisfies 
\begin{equation*}
\sup \{ \lvert U \rvert \mid U \text{ is a gap contained in } ( -\infty, N ] \} < A + \epsilon_{0}.
\end{equation*}
Then, we can take a sequence of gaps $\{ U_{n} \} \ ( n = 1, 2, \cdots )$ such that 
\begin{itemize}
\item[(a)] every $U_{n}$ is contained in $( -\infty, N]$; 
\item[(b)] $A - \epsilon_{0} <  \lvert U_{n} \rvert < A + \epsilon_{0}$; 
\item[(c)] $U_{n+1} < U_{n}$.
\end{itemize}
Let $k_{n} = U^{R}_{n} \ ( n = 1, 2, \cdots )$. Let us show that this $\{ k_{n} \}$ satisfies the desired properties.
Retaking $N$ and $\{ U_{n} \}$ if necessary, we can further assume that 
for any gap $S$ contained in $[N, 0]$, $\frac{S^L - U_1^R}{|S|}$ is sufficiently large. 
Therefore, it is enough to show that for any $U_n$ we have 
\begin{equation*}
\frac{ U^{L} - U_{n}^{R}  }{  \lvert U \rvert  } > \tau(K) - \epsilon, 
\end{equation*}
where $U$ is a gap which is contained in $(-\infty, N ]$ and satisfies $U_{n} < U$. 
Let $U$ and $U_{n}$ be such gaps. Then, 
\begin{equation*}
\frac{ U^{L} - U^{R}_{n} }{ \lvert U \rvert } = 
\frac{ U^{L} - U^{R}_{n} }{ \lvert U_{n} \rvert } \cdot 
\frac{ \lvert U_{n} \rvert }{ \lvert U \rvert } \geq \tau(K) \cdot 
\frac{ A - \epsilon_{0} }{ A + \epsilon_{0} } > \tau(K) - \epsilon,   
\end{equation*}
which was to be proved. \vspace{2mm} \\
Case 3) \ Suppose that $A = 0$.
Let us assume that we cannot find such a sequence $\{ k_{n} \}$, to derive a contradiction. 
Let $N < 0$ be a number such that 
\begin{equation*}
\sup \{ \lvert U \rvert \mid U \text{ is a gap contained in } ( -\infty, N ] \} < 1.
\end{equation*}
By assumption, retaking $N$ if necessary, we can assume that 
$\tau( K \cap [U^{R}, 0] ) \leq \tau(K) - \epsilon$ for any gap $U$ contained in $( -\infty, N ]$.
Let us choose a gap $U_{0}$ contained in $( -\infty, N ]$ in such a way that $N - B$ is sufficiently large, where 
\begin{equation*}
B =  U^{R}_{0} + ( 1 + \tau(K) - \epsilon )  \frac{\tau(K)}{\epsilon}.
\end{equation*}

Let us show by induction that we can choose a sequence of gaps 
$U_{n} \ ( n = 1, 2, \cdots )$ contained in $( -\infty, B ]$ which satisfies
\begin{equation}\label{scondition}
\begin{aligned}
(a)& \ \ U_{n} < U_{n+1}; \\
(b)& \ \ \vert U_{n} \rvert \leq \frac{\tau(K) - \epsilon}{\tau(K)} \lvert U_{n+1} \rvert;  \\  
(c)& \ \ U^{L}_{n+1} - U^{R}_{n} \leq ( \tau(K) - \epsilon ) | U_{n+1} |; 
\end{aligned}
\end{equation}  
for all $n = 0, 1, 2, \cdots$. If this is shown, 
this apparently leads to a contradiction since it implies $| U_{n} |$ goes to infinity.

Suppose that $U_{0}, U_{1}, \cdots , U_{n}$ are already chosen.
Since
\begin{equation*}
\tau( K \cap [ U^{R}_{n}, 0 ] ) \leq \tau(K) - \epsilon
\end{equation*}
and $N - B$ is sufficiently large, 
there exists a gap $U_{n+1}$ contained in $( -\infty, N ]$ which satisfies
\begin{equation*} 
\begin{aligned}
(a)& \ \ U_{n} < U_{n+1}; \\ 
(b)& \ \ \frac{ U^{L}_{n+1} - U^{R}_{n} }{ | U_{n+1} | } \leq  \tau(K) - \epsilon;  \\
(c)& \ \ \frac{ U^{L}_{n+1} - U^{R}_{n} }{ | U_{n} | } \geq \tau(K).
\end{aligned}
\end{equation*}
Therefore,
\begin{equation*}
\vert U_{n} \rvert \leq \frac{\tau(K) - \epsilon}{\tau(K)} \lvert U_{n+1} \rvert, 
\text{ \ and  \ }  U^{L}_{n+1} - U^{R}_{n} \leq ( \tau(K) - \epsilon ) | U_{n+1} |.
\end{equation*} 
Since we have 
\begin{equation*}
| U_{i+1} | + ( U_{i+1}^{L} - U_{i}^{R} ) \leq ( 1 + \tau(K) -\epsilon ) | U_{i+1} | \ \ ( i = n, n-1, \cdots , 0 ), 
\end{equation*}
we get
\begin{equation*}
\begin{aligned}
U^{R}_{n+1} &= U^{R}_{0} + \left\{  \lvert U_{n+1} \rvert  + ( U^{L}_{n+1} - U^{R}_{n} ) +  
\lvert U_{n} \rvert + ( U^{L}_{n} - U^{R}_{n-1} ) + \cdots + 
\lvert U_{1} \rvert + ( U^{L}_{1} - U^{R}_{0} ) \right\}  \\
&< U^{R}_{0}  +  \left\{ ( 1 + \tau(K) - \epsilon ) | U_{n+1} | + ( 1 + \tau(K) -  \epsilon ) | U_{n} | + \cdots 
\right\} \\
&< U_{0}^{R} + ( 1 + \tau(K) - \epsilon ) \left\{  1 + \frac{\tau(K) - \epsilon }{ \tau(K) } 
+ \left(  \frac{\tau(K)-\epsilon}{\tau(K)} \right)^{2} + \cdots  
\right\} \\
&= B
\end{aligned}
\end{equation*}
(from the second to the third inequality, we used the fact that $|U_{n+1}| < 1$).
Therefore, $U_{n+1}$ is indeed contained in $( -\infty, B]$.

By induction, we can choose a sequence of gaps 
$\{ U_{n} \}$ which satisfies the condition (\ref{scondition}), a clear contradiction.
\end{proof}

\begin{lem}\label{nmb}
Let $C > 0$ be a real number, and 
let $K$ be a $C$-nice $0^{+}$-Cantor set. 
Then, there exists a decreasing sequence $\{ k_{n} \}$ 
which satisfies 
\begin{itemize}
\item[(1)] $k_{n} \to -\infty$; 
\item[(2)] $\widetilde{K_+} \cap [ k_{n}, \widetilde{K_+}^{R} ]$ is a Cantor set with 
\begin{equation*}
\tau \left( \widetilde{K_+} \cap [ k_{n}, \widetilde{K_+}^{R} ] \right) \geq 
\frac{ \log \left( 1 + \frac{\tau(K)}{1 + C} \right) }{ \log \left( 1 + \frac{1}{C} \right) }. 
\end{equation*}
\end{itemize}
\end{lem}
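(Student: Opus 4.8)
The plan is to reduce the statement to Lemma~\ref{gegege} applied to \emph{bounded} truncations of $K$, rather than to the extended Cantor set $\widetilde{K_+}$ itself. Write $B := \frac{\log\left(1 + \frac{\tau(K)}{1+C}\right)}{\log\left(1 + \frac{1}{C}\right)}$ for the target bound. The reason for truncating first is exactness: combining Lemma~\ref{gegege} (which already gives $\tau(\widetilde{K_+}) \geq B$) with Lemma~\ref{extended} would only produce truncations of thickness $> B - \epsilon$, whereas the statement asks for $\geq B$. Estimating each truncation directly avoids this loss.

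First I would produce the sequence. Since $K$ is a $0^{+}$-Cantor set we have $\inf K_+ = 0$, so $\widetilde{K_+}$ is an extended Cantor set, bounded above and unbounded below. Being nowhere dense and unbounded below, it has gaps whose right endpoints tend to $-\infty$; I would choose a decreasing sequence $\{k_n\}$ of such right endpoints with $k_n \to -\infty$. (Lemma~\ref{extended} also supplies such a sequence.) Because each $k_n$ is the right endpoint of a gap of $\widetilde{K_+}$, the point $e^{k_n}$ is the right endpoint of a gap of $K_+$, so $K_n := K \cap [e^{k_n}, K^R]$ is a genuine bounded Cantor set with $K_n > 0$, and since $\log$ is an increasing bijection of $(0,\infty)$ onto $\mathbb{R}$ with $e^{k_n} > 0$, one has $\widetilde{K_n} = \widetilde{K_+} \cap [k_n, \widetilde{K_+}^{R}]$.

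Next I would check that each $K_n$ inherits the hypotheses of Lemma~\ref{gegege}. As $K$ is a $C$-nice $0^{+}$-Cantor set, every gap of $K_+$ in $(0, K^R)$ is $C$-nice; the gaps of $K_n$ are precisely those gaps of $K_+$ contained in $(e^{k_n}, K^R)$, and the condition $U^{L}/|U| \geq C$ is intrinsic to a gap, so every gap of $K_n$ is again $C$-nice. Moreover the gaps of $K_n$ form a subset of the gaps of $K$, so $\tau(K_n)$ is an infimum over fewer pairs, whence $\tau(K_n) \geq \tau(K)$ (the same phenomenon as in Lemma~\ref{straightforward}). Applying Lemma~\ref{gegege} to $K_n$ and using that the bound increases with the thickness gives
\[
\tau\bigl(\widetilde{K_n}\bigr) \geq \frac{\log\left(1 + \frac{\tau(K_n)}{1+C}\right)}{\log\left(1 + \frac{1}{C}\right)} \geq \frac{\log\left(1 + \frac{\tau(K)}{1+C}\right)}{\log\left(1 + \frac{1}{C}\right)} = B,
\]
which, under the identification $\widetilde{K_n} = \widetilde{K_+} \cap [k_n, \widetilde{K_+}^{R}]$, is exactly the desired conclusion.

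The main point to be careful about is the exactness of the bound: the naive route through $\tau(\widetilde{K_+})$ loses an $\epsilon$, so it is essential to apply Lemma~\ref{gegege} to the truncations themselves and to exploit both the monotonicity of $B$ in $\tau$ and the inequality $\tau(K_n) \geq \tau(K)$. The existence of the sequence $\{k_n\}$ is the only other ingredient, and it is routine once one notes that $\widetilde{K_+}$ is an unbounded-below nowhere dense set.
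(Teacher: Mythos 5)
The argument breaks down at the claim $\tau(K_n) \geq \tau(K)$. The justification offered (``the gaps of $K_n$ form a subset of the gaps of $K$, so the infimum is over fewer pairs'') does not work: truncation creates a \emph{new} infinite gap $(-\infty, e^{k_n})$, and a pair consisting of this gap and a finite gap $U_2$ of $K_n$ contributes the single ratio $\frac{U_2^{L} - e^{k_n}}{|U_2|}$ to the infimum, since the ratio with the infinite gap in the denominator vanishes. In $K$ the corresponding pair $(W, U_2)$, where $W$ is the gap with $W^{R} = e^{k_n}$, only guarantees $\max\bigl\{ \frac{U_2^{L} - W^{R}}{|W|}, \frac{U_2^{L} - W^{R}}{|U_2|} \bigr\} \geq \tau(K)$, and when $|U_2| > |W|$ this gives no useful lower bound on $\frac{U_2^{L} - W^{R}}{|U_2|}$. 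This is exactly why Lemma \ref{straightforward} insists on cutting at a gap of \emph{maximal} size, and for a $0^{+}$-Cantor set that option is unavailable: the gaps of $K_+$ accumulate at $0$ with sizes tending to $0$, so every truncation point $e^{k_n} \to 0$ cuts at a gap that is far from maximal. Concretely, for the middle-$\frac{1}{1+2M}$ Cantor set $K = \{0\} \sqcup \bigsqcup_{n \geq 0} \lambda^{n} K_0$ with $\lambda = \frac{M}{1+2M}$ (a $M$-nice $0^{+}$-Cantor set with $\tau(K) = M$), cutting at the right endpoint of the $n$-th rescaled central gap and pairing the new infinite gap with the original central gap $U_2 = \left( \frac{M}{1+2M}, \frac{1+M}{1+2M} \right)$ yields the ratio $M - \lambda^{n}(1+M) < M$, so $\tau(K_n) < \tau(K)$ for every $n$.

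Consequently your final display only gives $\tau(\widetilde{K_n}) \geq \frac{\log\left(1 + \frac{\tau(K_n)}{1+C}\right)}{\log\left(1 + \frac{1}{C}\right)}$ with $\tau(K_n)$ possibly strictly below $\tau(K)$ --- precisely the $\epsilon$-loss you set out to avoid. Recovering the exact bound is the real content of the lemma, and it is why the paper's proof splits into cases according to $A = \lim_{N \to -\infty} \sup\{ |U| \mid U \text{ a gap of } \widetilde{K_+} \text{ in } (-\infty, N] \}$: when $A = \log\left(1 + \frac{1}{C}\right)$ one can cut $\widetilde{K_+}$ at gaps that are maximal among all gaps to their right (these are near-maximal gaps of $\widetilde{K_+}$, not of $K$; the log-gap sizes $\log(1 + |U|/U^{L})$ need not tend to $0$ even though $|U|$ does), and a Lemma \ref{straightforward}-type argument combined with Lemma \ref{gegege} applies; when this fails, one shows the relevant tail of $K$ is $C''$-nice for some $C'' > C$, so Lemmas \ref{gegege} and \ref{aaaa} give a \emph{strict} inequality and the $\epsilon$-loss of Lemma \ref{extended} becomes affordable. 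Your proof contains no substitute for this case analysis, so the gap is essential rather than cosmetic.
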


\begin{proof}
Let 
\begin{equation*}
A = \lim_{N \to -\infty} \sup \left\{ |U| \mid U \text{ is a gap of $\widetilde{K_+}$ contained in } (-\infty, N] \right\}.
\end{equation*}
Note that $A \leq \log( 1 + \frac{1}{C})$.
\vspace{1mm} \\
Case 1) \ Suppose that $A = \log \left( 1 + \frac{1}{C} \right)$. 
Then, there exists a sequence of gaps $\{ U_{n} \} \ (n = 0, 1, 2, \cdots )$ of $\widetilde{K_+}$ such that 
\begin{equation*}
\begin{aligned}
(a)& \ \  | U_{n} | \uparrow \log \left( 1 + \frac{1}{C} \right);  \\
(b)& \ \ U_{0} > U_{1} > U_{2} > \cdots;  \\
(c)& \ \ | U_{n} | \geq |U| \text{ \ for every gap } U \text{ in } [U^{R}_{n}, \widetilde{K_+}^{R}].  
\end{aligned}
\end{equation*}
By Lemma \ref{gegege}, 
$k_{n} = U^{R}_{n} \ (n = 0, 1, 2, \cdots)$ satisfies the desired properties. \vspace{1mm} \\
Case 2) \ Suppose that every gap in $K$ is a $C'$-nice gap for some $C' > C$. Then, since we have 
\begin{equation*}
\tau( \widetilde{K_+} ) \geq \frac{ \log \left( 1 + \frac{\tau(K)}{1 + C'} \right) }{ \log \left( 1 + \frac{1}{C'} \right) }
> \frac{ \log \left( 1 + \frac{\tau(K)}{1 + C} \right) }{ \log \left( 1 + \frac{1}{C} \right) }
\end{equation*} 
by Lemma \ref{gegege} and Lemma \ref{aaaa}, the claim follows from Lemma \ref{extended}. \vspace{1mm} \\
Case 3) \ 
Suppose that $A < \log \left( 1 + \frac{1}{C} \right)$, and $K$ has a $C$-gap. Let $U$ be the $\log$-$C$-gap of 
$\widetilde{K_+}$ such that 
each gap in $(-\infty, U^{L}]$ is not a $\log$-$C$-gap of $\widetilde{K_+}$. Then, there exists $C'' > C$ such that 
every gap of $\widetilde{K_+}$ in $(-\infty, U^{L}]$ is a $\log$-$C''$-nice gap of $\widetilde{K_+}$. 
Therefore, we get 
\begin{equation*}
\tau \left( (-\infty, U^{L}] \cap \widetilde{K_+} \right) \geq \frac{ \log \left( 1 + \frac{\tau(K)}{1 + C''} \right) }{ \log \left( 1 + \frac{1}{C''} \right) } > 
\frac{ \log \left( 1 + \frac{\tau(K)}{1 + C} \right) }{ \log \left( 1 + \frac{1}{C} \right) },
\end{equation*}
so again the claim follows from Lemma \ref{extended}.
\end{proof}

\section{Proof of the main results}\label{9807}
In this section, we will prove our main results. 

\begin{proof}[Proof of Theorem \ref{kamidaro}]
Without loss of generality, we can assume that 
\begin{equation*}
\tau(L) \geq  \frac{2 \tau(K) + 1 }{ \tau(K)^2 }.
\end{equation*}
Let $C = \tau(K) \tau(L) - 1$. Note that $C > 0$. It is easy to see that 
\begin{equation*}
1 + \frac{\tau(L)}{1 + C} = 1 + \frac{1}{\tau(K)}, \ \text{ and } \  1 + \frac{1}{C} \leq 1 + \frac{\tau(K)}{1 + \tau(K)}.
\end{equation*}
Case 1) \ Suppose that every gap in $L$ is a $C$-nice gap. 
Note that every gap in $K$ is a $\tau(K)$-nice gap.  
Since  
\begin{equation*}
\frac{ \log \left( 1 + \frac{\tau(K)}{1 + \tau(K)} \right) }{ \log \left( 1 + \frac{1}{\tau(K)} \right) } \cdot
 \frac{ \log \left(  1 + \frac{\tau(L)}{ 1 + C }  \right)  }{ \log \left( 1 + \frac{1}{C} \right) } \geq 1, 
\end{equation*}
By Theorem \ref{sum_sum_sum} and Lemma \ref{nmb}, $\widetilde{K_+} + \widetilde{L_+}$ is a half line. 
\vspace{1mm} \\
Case 2) \ Suppose that there is a $C$-bad gap in $L$. 
Let $V$ be the $C$-nice Cantor set of $L$ such that $V^R = L^R$. By Lemma \ref{akb}, 
\begin{equation*}
\begin{aligned}
| \widetilde{V} | &> \log \left(  1 + \frac{\tau(L)}{ 1 + C } \right) \\
&= \log \left( 1 + \frac{1}{\tau(K)} \right), 
\end{aligned}
\end{equation*}
so for any gap $U$ of $\widetilde{K_+}$, we have $|U| \leq |\widetilde{V}|$. 
Therefore, Theorem \ref{sum_sum_sum} and 
Lemma \ref{nmb} imply that $\widetilde{K_+} + \widetilde{L_+}$ is a 
half line.

Next, let $M, N > 0$ be real numbers with condition (\ref{thm0}). 
Without loss of generality, we can assume that $M \geq N$. With this additional assumption, (\ref{thm0}) is equivalent to 
\begin{equation*}
N < \frac{2 M + 1}{M^2}.
\end{equation*}
Let 
$C = \frac{ M ( 1 + N ) }{ 1 + M }$.
Then, it is easy to see that 
\begin{equation*}
1 + \frac{1}{M} > 1 + \frac{N}{1 + C},  \ \ \frac{1 + 2M}{M} = \frac{1 + C + N}{C}, \text{ \ and } \ \  C \geq N.
\end{equation*}
Let $K$ be the middle-$\frac{1}{1 + 2M}$ Cantor set whose convex hull is $[0, 1]$. 
Write 
\begin{equation*}
K_{0} = K \cap \left[\frac{1+M}{1+2M}, 1 \right].
\end{equation*}
Then 
\begin{equation*}
K = \{0\} \sqcup \bigsqcup_{n=0}^{\infty} \left( \frac{M}{ 1 + 2M } \right)^{n} K_{0}.
\end{equation*}
Let $L_{0}$ be a Cantor set with sufficiently large thickness whose convex hull is 
$\left[\frac{1+ C}{ 1 + C + N}, 1\right]$. Let us define a $0^{+}$-Cantor set $L$ as follows:
\begin{equation*}
L  = \{0\} \sqcup \bigsqcup_{n=0}^{\infty} \left( \frac{C}{ 1 + C + N } \right)^{n} L_{0}.
\end{equation*}
Since $\tau(L_0)$ is sufficiently large, $\tau(L) = N$. Note that 
\begin{equation*}
\begin{aligned}
\widetilde{K_+} + \widetilde{L_+} &= 
\bigcup_{m, n \geq 0} \left\{  -m \log \left( \frac{1 + 2M}{M} \right) - n \log \left( \frac{1 + C + N}{C} \right) 
+ \left( \widetilde{K_0} + \widetilde{L_0} \right)  \right\} \\
&= \bigcup_{n \geq 0} \left\{  -n \log \left( \frac{1 + 2M}{M} \right) 
+ \left( \widetilde{K_0} + \widetilde{L_0} \right)  \right\}.
\end{aligned}
\end{equation*}
Note that by Theorem \ref{sum_sum_sum} and Lemma \ref{gegege}, $\widetilde{K_0} + \widetilde{L}_0$ is an interval. 
Therefore, since 
\begin{equation*}
\begin{aligned}
( \widetilde{K_0}^R + \widetilde{L_0}^R ) - ( \widetilde{K_0}^L + \widetilde{L_0}^L ) &= 
( \widetilde{K_0}^R - \widetilde{K_0}^L ) + ( \widetilde{L_0}^R - \widetilde{L_0}^L ) \\
&= \log \left( 1 + \frac{M}{1 + M} \right) + \log \left( 1 + \frac{N}{1 + C} \right) \\
&< \log \left( 1 + \frac{M}{1 + M} \right) + \log \left( 1 + \frac{1}{M} \right) \\
&= \log \left( \frac{1 + 2M}{M} \right),
\end{aligned}
\end{equation*}
$\widetilde{K_+} + \widetilde{L_+}$ is a disjoint union of countably many closed intervals. 

Next, let us also show that $K \cdot L$ can be a disjoint union of $k$ intervals, for any $k \geq 2$. 
The construction is almost the same as the construction above. 
Consider exactly the same $K$ and $L_0$, and modify $L$ to be 
\begin{equation*}
L = L_{1} \sqcup \bigsqcup_{n=0}^{k-2} \left( \frac{C}{1 + C + N} \right)^{n} L_{0},
\end{equation*}
where $L_{1}$ is a Cantor set which satisfies 
\begin{equation*}
\begin{aligned}
(1)& \ \ \text{the convex hull of $L_{1}$ is} \left[ 0, \left( \frac{C}{ 1 + C + N } \right)^{k-1} \right];  \\
(2)& \ \ \tau(L_{1}) \text{ is sufficiently large.}
\end{aligned}
\end{equation*}
Then, $\tau(K) = M, \, \tau(L) = N$, and it is easy to see that $K \cdot L$ is a disjoint union of $k$ intervals. 
\end{proof}

\begin{rem}
Since every gap in $K$ and $L$ is a $\tau(K)$-nice gap and a $\tau(L)$-nice gap, respectively, 
$K \cdot L$ is an interval if 
\begin{equation*}
\frac{ \log \left( 1 + \frac{\tau(K)}{1 + \tau(K)} \right) }{ \log \left( 1 + \frac{1}{\tau(K)} \right) } \cdot
\frac{ \log \left( 1 + \frac{\tau(L)}{1 + \tau(L)} \right) }{ \log \left( 1 + \frac{1}{\tau(L)} \right) } \geq 1, 
\end{equation*}
 by Lemma \ref{nmb}. 
But interestingly this is not the optimal estimate.
\end{rem}

Next, let us show Theorem \ref{goddaro}. The proof is essentially the same as Theorem \ref{kamidaro}. 

\begin{proof}[proof of Theorem \ref{goddaro}]
The first half is a verbatim repetition of Theorem \ref{kamidaro}. To show that the estimate is optimal, 
we consider 
$L'$ instead of $L$, which has the same positive part as $L$ and has the negative part of sufficiently large size and thickness.  
\end{proof}

To prove Theorem \ref{migotoda}, we need a series of lemmas. 
Recall that for any Cantor set $K$ and real number $C>0$, we defined $\log$-$C$-nice Cantor set of $K_{+}$, and 
$\log$-$C$-nice extended Cantor set of $K_{+}$ in Definition \ref{defdef}. We define 
$\log$-$C$-nice Cantor set of $K_{-}$, and $\log$-$C$-nice extended Cantor set of $K_{-}$ analogously. 

\begin{lem}\label{819}
Let $K$ be a Cantor set, and let $C < \tau(K)$ be a positive number. 
Suppose that $U$ is a $\log$-$C$-bad gap of $\widetilde{K_+}$. 
Then, there exists a set $X \subset \widetilde{K_-}$ such that
\begin{equation*}
\begin{aligned}
&(1) \ X \text{ is a } \log\text{-}C\text{-nice Cantor set of $\widetilde{K_-}$, or a } \log \text{-} C\text{-nice extended Cantor set of } 
\widetilde{K_-}; \\
&(2) \ X^{R} - U^{R} > \log \left( \frac{ \tau(K) - C }{ 1 + C } \right);  \\
&(3) \ U^{L} - X^{L} > \log \left( \frac{ \tau(K) - C }{ 1 + C } \right)
\end{aligned}
\end{equation*}
(if $X$ is a $\log$-$C$-nice extended Cantor set, we set $X^{L} = -\infty$). 
\begin{rem}
In general, this set $X$ is not unique. 
\end{rem}
\end{lem}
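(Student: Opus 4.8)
The plan is to pass to the multiplicative coordinates of $K_-$ and reduce everything to a dichotomy for the $C$-bad gaps of $K_-$. Write $P$ for the $C$-bad gap of $K_+$ with $\widetilde P = U$, so $U^L = \log P^L$, $U^R = \log P^R$, and $P^L < C|P|$. Since $\log$ carries the gaps of $K_-$ order-isomorphically onto those of $\widetilde{K_-}$, it suffices to produce a $C$-nice Cantor set (or $C$-nice extended Cantor set) $X' \subset K_-$ whose endpoints satisfy
\[
(X')^R > P^R\cdot\frac{\tau(K)-C}{1+C}=:\beta, \qquad (X')^L < P^L\cdot\frac{1+C}{\tau(K)-C}=:\alpha;
\]
then $X=\widetilde{X'}$ gives (2) and (3), with (3) vacuous in the extended case where we read $(X')^L$ as $0$. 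Here $\tau(K)-C>0$, so $\alpha,\beta$ are well defined.

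The key step I would isolate is the dichotomy: \emph{every $C$-bad gap $W$ of $K_-$ satisfies either $W^R<\alpha$ or $W^L>\beta$}. To prove it I would view the reflected gap $-W$ and $P$ as two gaps of $K$ with $-W<P$; the bridge between them has length $P^L+W^L$, so thickness gives $P^L+W^L\ge\tau(K)\min(|W|,|P|)$. If $|W|\le|P|$, feeding in $W^L<C|W|$ yields $P^L>(\tau(K)-C)|W|$, whence $W^R=W^L+|W|<(1+C)|W|<\alpha$; if $|W|>|P|$, feeding in $P^L<C|P|$ yields $W^L>(\tau(K)-C)|P|$, and since $C|P|>P^L$ forces $(\tau(K)-C)|P|\ge\beta$, we get $W^L>\beta$. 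I expect this case analysis --- playing the badness of the two gaps against the thickness of $K$ --- to be the main obstacle; the rest is bookkeeping.

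Running the same estimate with $-W$ the left-unbounded gap of $K$ (so $|W|=\infty$, $W^L=K_-^R$) should give $K_-^R>(\tau(K)-C)|P|\ge\beta$, so the unbounded top gap of $K_-$ is itself a $C$-bad gap lying to the right of $\beta$; thus the set of \emph{high} bad gaps $\{W:W^L>\beta\}$ is nonempty. Every high bad gap falls in the second case, hence has $|W|>|P|$, so these are pairwise disjoint intervals of length $>|P|$ inside $[\beta,K_-^R]$ (together with the top gap) and there are finitely many. I would then let $W_2$ be the high bad gap with the smallest left endpoint and $W_1$ the $C$-bad gap of $K_-$ immediately below $W_2$, if one exists. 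By minimality $W_1$ is not high, so the dichotomy gives $W_1^R<\alpha$, and $X'=[W_1^R,W_2^L]\cap K_-$ is a $C$-nice Cantor set with $(X')^L=W_1^R<\alpha$ and $(X')^R=W_2^L>\beta$. If no bad gap lies below $W_2$, then (using $\inf K_-=0$) $X'=(0,W_2^L]\cap K_-$ is a $C$-nice extended Cantor set with $(X')^R>\beta$ and (3) vacuous. Finally, passing to $\widetilde{X'}$ and recalling $U^L=\log P^L$, $U^R=\log P^R$ turns the two endpoint inequalities into (2) and (3).
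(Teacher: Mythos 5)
Your argument is essentially the paper's: the two inequalities you derive (bridge length at least $\tau(K)\min(|W|,|P|)$, played against the $C$-badness of whichever of the two gaps is the smaller one) are exactly the estimates in the paper's proof, and your $W_2,W_1$ play the roles of the paper's $U_1,U_2$. The only real difference is the selection rule: the paper takes $U_1$ to be the lowest $C$-bad gap of $K_-$ of size at least $|P|$ (finitely many such, since their sizes are bounded below by $|P|$) and $U_2$ the bad gap immediately below it, so that $|U_2|<|P|$ is automatic; you select by position relative to $\beta$ instead of by size.

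That selection rule is where your write-up has a genuine flaw: the sentence ``every high bad gap falls in the second case, hence has $|W|>|P|$'' does not follow. The dichotomy is an inclusive or, and when $\beta<\alpha$ --- equivalently $P^R/P^L<(1+C)^2/(\tau(K)-C)^2$, which is possible because the lemma is used in a regime where $\frac{\tau(K)-C}{1+C}$ may be less than $1$ --- a bad gap with $|W|\leq|P|$ can satisfy $W^R<\alpha$ and \emph{also} have $W^L>\beta$. So high bad gaps need not have length greater than $|P|$, and your finiteness count collapses, which threatens the existence of the minimal $W_2$. The conclusion is nonetheless true for a different reason: a $C$-bad gap $W$ of $K_-$ with $W^L>\beta$ satisfies $|W|>W^L/C>\beta/C>0$, and pairwise disjoint gaps of length greater than $\beta/C$ with left endpoints in $(\beta,K_-^R]$ are finite in number. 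The same observation (badness forces $|W|>W^L/C$, and left endpoints of the relevant gaps are bounded away from $0$) is what guarantees that ``the bad gap immediately below $W_2$'' is well defined. With that one-line patch --- or by reverting to the paper's size-based selection --- your proof is complete, and the extended-Cantor-set case via $\inf K_-=0$ matches the paper's parenthetical remark.
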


\begin{proof}
Let $W$ be the gap of $K_{+}$ satisfying $\widetilde{W} = U$.
Let $U_{1}$ be the $C$-bad gap of $K_-$ which satisfies 
\begin{itemize}
\item[(1)] $|W| \leq |U_{1}|$; 
\item[(2)] the size of every $C$-bad gap of $K_-$ in $(0, U_1^{L})$ is less than $|W|$. 
\end{itemize}
Then, by the definition of thickness, we have 
\begin{equation*}
U_{1}^{L} > \tau(K) |W| - C |W|.
\end{equation*}
Therefore, 
\begin{equation*}
\begin{aligned}
\widetilde{U_1}^{L} - U^{R} &= \log U_{1}^{L} - \log W^{R} \\
&> \log ( \tau(K) - C ) |W| - \log ( 1 + C ) |W| \\
&= \log \left( \frac{ \tau(K) - C }{ 1 + C } \right).
\end{aligned}
\end{equation*}
Suppose next that $U_{2}$ is the $C$-bad gap of $K_{-}$ such that 
\begin{itemize}
\item[(1)] $|U_{2}| < |W|$; 
\item[(2)] every gap in $(U^{R}_2, U^{L}_1) \cap K_{-}$ is a $C$-nice gap. 
(If such $U_2$ does not exist, then $X = \log\left( {(0, U^L_1] \cap K_-} \right)$ is a desired set.)
\end{itemize}
Then, by the definition of thickness, we have 
\begin{equation*}
W^{L} > \tau(K) |U_2| - C |U_2|.
\end{equation*}
Therefore, 
\begin{equation*}
\begin{aligned}
U^{L} - \widetilde{U_2}^{R} &= \log W^{L} - \log U_2^{R} \\
&> \log ( \tau(K) - C ) |U_2| - \log ( 1 + C ) |U_2| \\
&= \log \left( \frac{ \tau(K) - C }{ 1 + C } \right).
\end{aligned}
\end{equation*}
Then $X = \log \left( K_{-} \cap [U_2^R, U_1^L] \right)$ satisfies the desired properties.  
\end{proof}

We call the set $X$ given in Lemma \ref{819} a $\log$-$C$-\emph{cover} of $U$.

\begin{lem}\label{what}
Let $x, y$ be positive real numbers with $2(x + 1)( y + 1 ) \leq ( xy - 1 )^{2}$. 
Write
\begin{equation*}
C_{x, y} = \frac{x + 1}{xy - 1}, \text{ and } \ C_{y, x} = \frac{y + 1}{xy - 1}.
\end{equation*}
Then, $0 < C_{x, y} < x$ and $0 < C_{y, x} < y$. 
Furthermore, we have 
\begin{equation*}
\frac{x - C_{x, y}}{1 + C_{x, y}} \cdot \frac{y - C_{y, x}}{1 + C_{y, x}} \geq 1.
\end{equation*}
\end{lem}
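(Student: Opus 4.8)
The plan is to reduce everything to the elementary symmetric quantities $s = xy$ and $p = x + y$, after which the hypothesis and all three conclusions become polynomial inequalities in $s$ and $p$ that are transparently related.

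First I would dispose of the positivity (sign) statements. Expanding the hypothesis gives $(xy-1)^2 \ge 2(x+1)(y+1) > 2$, since $(x+1)(y+1) > 1$; this forces $xy > 1 + \sqrt{2} > 2$, so in particular $xy - 1 > 0$, and since $x+1, y+1 > 0$ the numerators and denominators defining $C_{x,y}$ and $C_{y,x}$ are positive, giving $C_{x,y}, C_{y,x} > 0$. Rewriting the hypothesis in the $(s,p)$-variables yields the clean form $s^2 - 4s - 1 \ge 2p$; because $p > 0$, this also forces $s > 2 + \sqrt{5}$, a bound I will reuse.

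For the main inequality I would first record the identities
\[
x - C_{x,y} = \frac{x^2 y - 2x - 1}{xy - 1}, \qquad 1 + C_{x,y} = \frac{x(y+1)}{xy - 1},
\]
so that $\frac{x - C_{x,y}}{1 + C_{x,y}} = \frac{x^2 y - 2x - 1}{x(y+1)}$ and symmetrically $\frac{y - C_{y,x}}{1 + C_{y,x}} = \frac{xy^2 - 2y - 1}{y(x+1)}$. The target then becomes
\[
(x^2 y - 2x - 1)(xy^2 - 2y - 1) \ge xy(x+1)(y+1).
\]
Substituting $s = xy$ and $p = x+y$ collapses this to $s^3 - 5s^2 + 3s + 1 \ge 2p(s-1)$. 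The crucial observation, which I expect to be the heart of the matter, is that multiplying the hypothesis $s^2 - 4s - 1 \ge 2p$ by the positive factor $s-1$ gives $(s^2 - 4s - 1)(s-1) = s^3 - 5s^2 + 3s + 1 \ge 2p(s-1)$, i.e.\ precisely the desired inequality, with equality exactly when the hypothesis is tight. Spotting this polynomial factorization is what simultaneously proves the inequality and explains the sharpness of the estimate; the only real obstacle is carrying out the $(s,p)$-bookkeeping without error.

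It remains to upgrade the sign statements to the strict inequalities $C_{x,y} < x$ and $C_{y,x} < y$, i.e.\ to show that the factors $x^2 y - 2x - 1$ and $xy^2 - 2y - 1$ are positive. Since the displayed product is $\ge xy(x+1)(y+1) > 0$, the two factors share a sign, so it suffices to exclude both being negative. If both were negative, adding $x^2 y < 2x+1$ and $xy^2 < 2y+1$ would give $p(s-2) < 2$; but $s > 2 + \sqrt{5}$ yields $s - 2 > \sqrt{5}$ and, by the arithmetic–geometric mean inequality, $p \ge 2\sqrt{s} > 4$, so $p(s-2) > 8$, a contradiction. Hence both factors are positive, which gives $C_{x,y} < x$ and $C_{y,x} < y$ and completes the proof.
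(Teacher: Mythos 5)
Your proposal is correct, and at its core it rests on the same observation as the paper's proof: after clearing denominators, the product inequality is \emph{algebraically equivalent} to the hypothesis $2(x+1)(y+1)\leq (xy-1)^2$, which is also why the estimate is sharp. The execution differs, though. The paper cross-multiplies the product inequality directly, cancels the common term $C_{x,y}C_{y,x}$, and reaches the hypothesis in three short equivalences,
\begin{equation*}
\frac{x-C_{x,y}}{1+C_{x,y}}\cdot\frac{y-C_{y,x}}{1+C_{y,x}}\geq 1
\iff xy-1\geq C_{y,x}(x+1)+C_{x,y}(y+1)
\iff (xy-1)^2\geq 2(x+1)(y+1),
\end{equation*}
whereas you clear denominators in each factor separately and verify the polynomial identity $s^3-5s^2+3s+1=(s-1)(s^2-4s-1)$ in the symmetric variables $s=xy$, $p=x+y$; this is heavier bookkeeping (and I did check the expansion --- it is right), but it makes the equality case and the role of the hypothesis equally transparent. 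The larger divergence is in the first claim: the paper solves the hypothesis as a quadratic in $y$ to get $y\geq\bigl(2x+1+(x+1)\sqrt{2x+1}\,\bigr)/x^2>(2x+1)/x^2$, which is exactly the condition $C_{x,y}<x$, while you argue indirectly --- the two numerators $x^2y-2x-1$ and $xy^2-2y-1$ must share a sign because their product dominates $xy(x+1)(y+1)>0$, and the ``both negative'' case is excluded via $s>2+\sqrt{5}$ and AM--GM. Both routes are valid; the paper's is more direct and shows where the threshold $(2x+1)/x^2$ (which reappears in Theorems \ref{kamidaro} and \ref{goddaro}) comes from, while yours avoids solving the quadratic and needs only crude bounds.
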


\begin{proof}
It is easy to see that 
\begin{equation*}
2(x + 1)( y + 1 ) \leq ( xy - 1 )^{2} \Longleftrightarrow y 
\geq \frac{ 2x + 1 + ( x + 1 ) \sqrt{ 2x + 1 } }{ x^{2} }.
\end{equation*}
This implies that $xy > 1$. Therefore, since 
\begin{equation*}
 \frac{x + 1}{xy - 1} < x \Longleftrightarrow \frac{2x + 1}{x^{2}} < y, 
\end{equation*} 
the first claim follows. Also, since we have 
\begin{equation*}
\begin{aligned}
\frac{x - C_{x, y}}{1 + C_{x, y}} \cdot \frac{y - C_{y, x}}{1 + C_{y, x}} \geq 1 
& \Longleftrightarrow xy -1 \geq  C_{y, x} ( x + 1 ) +  C_{x, y} ( y + 1 )  \\
&\Longleftrightarrow xy - 1 \geq \frac{ (x+1)(y+1) }{ xy - 1 } + \frac{(x+1)(y+1)}{xy - 1}  \\
&\Longleftrightarrow ( xy - 1 )^{2} \geq 2 (x+1)(y+1),
\end{aligned}
\end{equation*}
the second claim follows.
\end{proof}

For any Cantor sets $K$ and $L$ with $\tau(K) \cdot \tau(L) > 1$, define $C_{K, L}$ and $C_{L, K}$ by  
\begin{equation*}
C_{K, L} = \frac{\tau(K) + 1}{ \tau(K) \tau(L) -1 }, \text{ and } \ C_{L, K} = \frac{ \tau(L) + 1 }{ \tau(K) \tau(L) - 1 }.
\end{equation*}

\begin{lem}\label{123}
Let $K$ and $L$ be Cantor sets with $\tau(K) \cdot \tau(L) > 1$. Assume that $V$ is a 
$\log$-$C_{K, L}$-nice Cantor set of $\widetilde{K_+}$, 
or a $\log$-$C_{K, L}$-nice $0^{\times}$-Cantor set of $\widetilde{K_+}$, or a $\log$-$C_{K, L}$-nice extended Cantor set of 
$\widetilde{K_+}$. 
Similarly, suppose that $T$ is a 
$\log$-$C_{L, K}$-nice Cantor set of $\widetilde{L_+}$, or a $\log$-$C_{L, K}$-nice $0^{\times}$-Cantor set of $\widetilde{L_+}$, 
or a $\log$-$C_{L, K}$-nice extended Cantor set of $\widetilde{L_+}$. 
Then $V + T$ is an interval, or a half line. 
\end{lem}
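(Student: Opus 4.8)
The plan is to reduce every case to Theorem \ref{sum_sum_sum}. The mechanism is that the special values $C_{K,L}$ and $C_{L,K}$ are rigged so that the lower bounds on $\tau(V)$ and $\tau(T)$ coming from the previous section are reciprocals of one another, whence their product is automatically $\geq 1$. Write $x=\tau(K)$, $y=\tau(L)$ and set
\[
a = 1+\frac{1}{C_{K,L}} = \frac{x(1+y)}{x+1}, \qquad b = 1+\frac{1}{C_{L,K}} = \frac{y(1+x)}{y+1}.
\]
A short computation from the definitions of $C_{K,L}$ and $C_{L,K}$ gives the two identities
\[
1+\frac{\tau(K)}{1+C_{K,L}} = b, \qquad 1+\frac{\tau(L)}{1+C_{L,K}} = a,
\]
and since $\tau(K)\tau(L)>1$ we have $a,b>1$, so $\log a,\log b>0$. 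Applying Lemma \ref{akb}(2) (or Lemma \ref{819819}(2), or Lemma \ref{gegege}, according to which of the three alternatives $V$ falls under) with $C=C_{K,L}$, and the analogous estimate for $T$ with $C=C_{L,K}$, yields
\[
\tau(V) \geq \frac{\log b}{\log a}, \qquad \tau(T) \geq \frac{\log a}{\log b},
\]
so that $\tau(V)\cdot\tau(T)\geq 1$. Part (1) of the same lemmas gives $|V|>\log b$ and $|T|>\log a$, and part (3) gives that every gap of $V$ has length at most $\log a$ and every gap of $T$ has length at most $\log b$; in the extended alternative the gap bound instead follows directly from the fact that every gap is $C$-nice, so its logarithm has length at most $\log(1+\tfrac1C)$.

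First I would dispose of the case in which both $V$ and $T$ are bounded, i.e.\ each is a $\log$-$C$-nice Cantor set or a $\log$-$C$-nice $0^{\times}$-Cantor set. Then both hypotheses of Theorem \ref{sum_sum_sum} are in force: the largest gap of $V$ is at most $\log a<|T|$, the largest gap of $T$ is at most $\log b<|V|$, and $\tau(V)\tau(T)\geq 1$. Theorem \ref{sum_sum_sum} then shows that $V+T$ is a closed interval.

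The case in which $V$ or $T$ is a $\log$-$C$-nice extended Cantor set is the main obstacle, since Theorem \ref{sum_sum_sum} is stated only for genuine (bounded) Cantor sets, and a crude truncation via Lemma \ref{extended} would only give thickness exceeding $\tau(V)-\epsilon$, which is not enough to preserve the product $\geq 1$ when the bound is attained with equality. The remedy is Lemma \ref{nmb}: by definition $V$ extended means that $K$ is a $C_{K,L}$-nice $0^{+}$-Cantor set and $V=\widetilde{K_+}$, so Lemma \ref{nmb} with $C=C_{K,L}$ produces a decreasing sequence $k_n\to-\infty$ with
\[
\tau\!\left(V\cap[k_n,V^R]\right) \geq \frac{\log b}{\log a}
\]
\emph{exactly}, with no loss of $\epsilon$. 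Writing $V_n=V\cap[k_n,V^R]$, each $V_n$ is a bounded Cantor set with $V_n^R=V^R$, with all gaps of length at most $\log a$, and with $\tau(V_n)\tau(T)\geq 1$; since $|V_n|\to\infty$, for all large $n$ we also have $|V_n|>\log b$, so Theorem \ref{sum_sum_sum} applies to $V_n+T$ and gives a closed interval.

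Finally I would assemble the half line. The intervals $V_n+T$ are nested increasing, all share the right endpoint $V^R+T^R$, and their left endpoints $k_n+T^L$ tend to $-\infty$, so their union $V+T=\bigcup_n(V_n+T)$ is the half line $(-\infty,\,V^R+T^R]$. If both $V$ and $T$ are extended, I would truncate both using Lemma \ref{nmb}, apply Theorem \ref{sum_sum_sum} to $V_n+T_m$ for large $n,m$, and take the union over the two indices, obtaining in the same way the half line $(-\infty,\,V^R+T^R]$. This completes all the combinations and proves that $V+T$ is an interval or a half line.
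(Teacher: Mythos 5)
Your proposal is correct and follows essentially the same route as the paper: verify the algebraic identities $1+\tfrac{\tau(K)}{1+C_{K,L}}=1+\tfrac{1}{C_{L,K}}$ and $1+\tfrac{\tau(L)}{1+C_{L,K}}=1+\tfrac{1}{C_{K,L}}$, feed the resulting bounds from Lemma \ref{akb} (respectively Lemma \ref{819819}) into Theorem \ref{sum_sum_sum}. The paper dismisses the unbounded alternatives with ``other cases can be shown analogously,'' while you correctly fill in that gap via the exact (no-$\epsilon$) truncation of Lemma \ref{nmb} and a nested union of intervals sharing the right endpoint.
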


\begin{proof}
Let us consider the case where $V$ is a $\log$-$C_{K, L}$-nice Cantor set of $\widetilde{K_+}$, and $T$ is a 
$\log$-$C_{L, K}$-nice Cantor set of $\widetilde{L_+}$. 
Since 
\begin{equation*}
1 + \frac{\tau(K)}{1 + C_{K, L}} = 
1 + \frac{1}{C_{L, K}},  \text{ and } \
1 + \frac{\tau(L)}{1 + C_{L, K}} = 
1 + \frac{1}{C_{K, L}}, 
\end{equation*}
by Theorem \ref{sum_sum_sum} and Lemma \ref{akb}, $V + T$ is an interval. Other cases can be shown analogously. 
\end{proof}

\begin{lem}\label{128}
Let $K, L$ be Cantor sets with condition (\ref{46578}). 
Let $U_{1}$, $U_{2}$ be a $\log$-$C_{K, L}$-bad gap of $\widetilde{K_+}$, and a 
$\log$-$C_{L, K}$-bad gap of $\widetilde{L_+}$, respectively. 
Let $X, Y$ be a $\log$-$C_{K, L}$-cover of $U_{1}$, and a $\log$-$C_{L, K}$-cover of $U_{2}$, respectively. 
Then, we have $X + Y  \supset U_{1} + U_{2}$. 
\end{lem}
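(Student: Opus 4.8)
The plan is to reduce the containment $X + Y \supset U_1 + U_2$ to a comparison of just two endpoints, which is legitimate precisely because $X + Y$ is already known to be an interval. First I would record that condition (\ref{46578}) forces $\tau(K)\tau(L) > 1$ (as observed in the proof of Lemma \ref{what}), so that $C_{K,L}$ and $C_{L,K}$ are well defined and positive and, again by Lemma \ref{what}, satisfy $C_{K,L} < \tau(K)$ and $C_{L,K} < \tau(L)$. Since $X$ is a $\log$-$C_{K,L}$-cover of $U_1$, Lemma \ref{819} tells us it is either a $\log$-$C_{K,L}$-nice Cantor set or a $\log$-$C_{K,L}$-nice extended Cantor set of $\widetilde{K_-}$, and likewise $Y$ for $\widetilde{L_-}$. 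The thickness estimates underlying Lemma \ref{123} depend only on $\tau(K)$, $\tau(L)$ and the constants $C_{K,L}$, $C_{L,K}$, so the same argument (applied to the negative parts) shows that $X + Y$ is an interval or a half line. Writing $X + Y = [\,X^L + Y^L,\, X^R + Y^R\,]$, with $X^L + Y^L = -\infty$ understood in the half-line case, it then suffices to establish the two endpoint inequalities $X^R + Y^R \geq U_1^R + U_2^R$ and $X^L + Y^L \leq U_1^L + U_2^L$.

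Next I would extract the quantitative content of the covers. By the defining properties (2) and (3) of a $\log$-$C$-cover in Lemma \ref{819}, applied to $X$ with $C = C_{K,L}$ and to $Y$ with $C = C_{L,K}$, we have
\begin{equation*}
X^R > U_1^R + \log\!\left( \frac{\tau(K) - C_{K,L}}{1 + C_{K,L}} \right), \qquad X^L < U_1^L - \log\!\left( \frac{\tau(K) - C_{K,L}}{1 + C_{K,L}} \right),
\end{equation*}
together with the two analogous inequalities for $Y$ relative to $U_2$ involving $\tau(L)$ and $C_{L,K}$. Adding the two ``$R$'' inequalities and, separately, the two ``$L$'' inequalities, the whole statement collapses to showing that
\begin{equation*}
\log\!\left( \frac{\tau(K) - C_{K,L}}{1 + C_{K,L}} \right) + \log\!\left( \frac{\tau(L) - C_{L,K}}{1 + C_{L,K}} \right) \geq 0.
\end{equation*}

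This is exactly where condition (\ref{46578}) enters: the displayed inequality is equivalent to
\begin{equation*}
\frac{\tau(K) - C_{K,L}}{1 + C_{K,L}} \cdot \frac{\tau(L) - C_{L,K}}{1 + C_{L,K}} \geq 1,
\end{equation*}
which is precisely the conclusion of Lemma \ref{what} with $x = \tau(K)$ and $y = \tau(L)$. Combining the bounds, I obtain $X^R + Y^R > U_1^R + U_2^R$ and $X^L + Y^L < U_1^L + U_2^L$, so the interval $[\,X^L + Y^L,\, X^R + Y^R\,]$ contains $[\,U_1^L + U_2^L,\, U_1^R + U_2^R\,]$ and hence the Minkowski sum $U_1 + U_2$; in the half-line case the left inequality holds automatically. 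I expect the genuine content to be thin and the only real subtlety to be bookkeeping: one must confirm that Lemma \ref{123} truly applies to the covers $X$ and $Y$ (so that $X + Y$ is an interval, which is what makes the endpoint comparison \emph{sufficient} rather than merely necessary), recognize that the $\log$-cover gap bounds of Lemma \ref{819} dovetail exactly with the product inequality of Lemma \ref{what}, and remember to handle separately the extended-Cantor-set case in which an endpoint is $-\infty$.
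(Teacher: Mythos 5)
Your proposal is correct and follows essentially the same route as the paper's own proof: it combines the endpoint displacement bounds from Lemma \ref{819}, the product inequality from Lemma \ref{what}, and the interval/half-line structure of $X+Y$ from Lemma \ref{123}. The only difference is that you spell out more carefully why Lemma \ref{123} applies to the covers sitting inside $\widetilde{K_-}$ and $\widetilde{L_-}$, a point the paper leaves implicit.
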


\begin{proof}
By Lemma \ref{what}, $\tau(K) > C_{K, L}, \ \tau(L) > C_{L, K}$, 
and 
\begin{equation*}
\frac{ \tau(K) - C_{K, L} }{ 1 + C_{K, L} } \cdot \frac{ \tau(L) - C_{L, K} }{ 1 + C_{L, K} } \geq 1.
\end{equation*}
Therefore, by Lemma \ref{819} we have 
\begin{equation*}
\begin{aligned}
( X^{R} + Y^{R} ) - ( U^{R}_{1} + U^{R}_{2} ) 
&= ( X^{R} - U^{R}_{1} ) + ( Y^{R} - U^{R}_{2} ) \\
&\geq \log \frac{ \tau(K) - C_{K, L} }{ 1 + C_{K, L} } +  \log \frac{ \tau(L) - C_{L, K} }{ 1 + C_{L, K} } \\
&\geq 0.
\end{aligned}
\end{equation*}
Similarly, $( U^{L}_{1} + U^{L}_{2} ) - ( X^{L} + Y^{L} ) \geq 0$. 
The claim follows from this and Lemma \ref{123}.
\end{proof}

\begin{dfn}
Let $K$ be a Cantor set with $K_{+} \neq \phi$, and let $C$ be a positive number. 
Let us take the sequence of $\log$-$C$-bad gaps $\{ U_{n} \} \ ( n = 0, 1, \cdots, k )$ of $\widetilde{K_+}$, where $k$ is 
either finite or infinite, in the following way:
\begin{itemize}
\item[(1)] $U_{0} = ( \widetilde{K_+}^R, \infty)$; 
\item[(2)] $U_{0} > U_{1} > U_{2} > U_{3} > \cdots$; 
\item[(3)] $V_{n} = [U_{n+1}^{R}, U_{n}^{L}] \cap \widetilde{K_+} \ (n = 0, 1, 2, \cdots )$ 
are $\log$-$C$-nice Cantor sets of $\widetilde{K_+}$. 
(If $k$ is finite, set $V_k = (-\infty, U^{L}_{k}]$.)
\end{itemize}
Then, we call $\{ U_{n} \}$ and $\left\{ V_{n} \right\}$ the \emph{$\log$-$C$-split gaps} of $\widetilde{K_+}$ and 
\emph{$\log$-$C$-split Cantor sets} of $\widetilde{K_+}$, respectively. If $k$ is finite, we say this split is \emph{finite}.
\end{dfn}

\begin{rem}
Note that if the split is finite in the above definition, $V_{k}$ is either a $\log$-$C$-nice extended Cantor set, or 
a $\log$-$C$-nice $0^{\times}$-Cantor set.
\end{rem}

Using these lemmas, we can complete the proof of Theorem \ref{migotoda}.

\begin{proof}[proof of Theorem \ref{migotoda}]
Let $\{ U_{n} \} \ (n = 0, 1, \cdots, k)$ and $\{ V_{n} \} \ (n = 0, 1, \cdots, k)$ be the $\log$-$C_{K, L}$-split gaps 
of $\widetilde{K_+}$, and the 
$\log$-$C_{K, L}$-split Cantor sets of $\widetilde{K_+}$, respectively.
Similarly, let $\{ S_{n} \} \ (n = 0, 1, \cdots, l) $ and $\{ T_{n} \} \ (n = 0, 1, \cdots, l)$ 
be the $\log$-$C_{L, K}$-split gaps of $\widetilde{L_+}$, and the $\log$-$C_{L, K}$-split Cantor sets of $\widetilde{L_+}$, respectively.
Then, by Lemma \ref{123},
\begin{equation*}
V_{i} + T_{j} \ ( i = 0, 1, \cdots, k, \  j = 0, 1, \cdots, l)
\end{equation*} 
are intervals, or half lines.
Let $X_{n} \ (n = 1, 2, \cdots, k)$ and $Y_{n} \ (n = 1, 2, \cdots, l)$ be $\log$-$C_{K, L}$-cover of $U_{n}$, and 
$\log$-$C_{L, K}$-cover of $S_{n}$, respectively. Then, by Lemma \ref{123} and Lemma \ref{nmb}, 
\begin{equation*}
X_{i} + Y_{j} \ (i = 1, 2, \cdots, k, \ j= 1, 2, \cdots, l)
\end{equation*} 
are intervals, or half lines. Note that, by Lemma \ref{128}, we have 
\begin{equation}
X_{i} + Y_{j} \supset U_{i} + S_{j} \  ( i = 1, 2, \cdots, k, \  j = 1, 2, \cdots, l).
\end{equation}
Consider the case that $k$ and $l$ are both infinite. Other cases can be shown similarly. We get 
\begin{equation*}
\begin{aligned}
( \widetilde{K_+} + \widetilde{L_+} ) \cup ( \widetilde{K_-} + \widetilde{L_-} ) 
&\supset \bigcup_{i=0}^{\infty} ( V_{i} + T_{i} )  \cup 
\bigcup_{i=1}^{\infty} ( X_{i} + Y_{i} ) \\
&\supset (-\infty, \widetilde{K_+}^R + \widetilde{L_+}^R].
\end{aligned}
\end{equation*}
Similarly, 
$( \widetilde{K_+} + \widetilde{L_+} ) \cup 
( \widetilde{K_-} + \widetilde{L_-} ) \supset (-\infty, \widetilde{K_-}^R + \widetilde{L_-}^R]$.
Therefore, 
\begin{equation*}
( \widetilde{K_+} + \widetilde{L_+} ) \cup 
( \widetilde{K_-} + \widetilde{L_-} ) = 
\left( -\infty, \max \left\{ \widetilde{K_+}^R + \widetilde{L_+}^R, \widetilde{K_-}^R + \widetilde{L_-}^R \right\} \right], 
\end{equation*}
which implies the first claim of the Theorem.

Next, let us show that this estimate is optimal.
Let $M, N$ be positive real numbers with condition (\ref{thm3}).
Let 
\begin{equation*}
C_1 = \frac{ MN(M+1) }{3MN + 2M + 2N + 1}, \text{ and } \ C_2 = \frac{ MN(N+1) }{3MN + 2M + 2N + 1}.
\end{equation*}
Note that (\ref{thm3}) implies $C_1 < M$ and $C_2 < N$. 
It is easy to see that 
\begin{equation}\label{oi}
\frac{ 1 + C_1 + M }{ M - C_1 } \cdot \frac{ 1 + C_2 + N }{N - C_2} 
= 1 + \frac{ 1 + M }{C_1} = 1 + \frac{1 + N}{C_2}. 
\end{equation}
Also, (\ref{thm3}) implies 
\begin{equation}\label{io}
1 + \frac{1}{C_1} > 1 + \frac{N}{1 + C_2}, \ \ 1 + \frac{1}{C_2} > 1 + \frac{M}{1 + C_1}. 
\end{equation}
Let $K$ be a $0$-Cantor set such that 
\begin{itemize}
\item[(1)] $K = K_{1} \sqcup K_{2}$ and $0 \in K_{1}$; 
\item[(2)] $K_{1}, K_{2}$ are Cantor sets with sufficiently large thickness; 
\item[(3)] the convex hull of $K_{1}$ and $K_{2}$ are $[C_{1} - M, C_{1}]$ and $[ 1 + C_{1}, 1 + C_{1} + M]$, respectively.
\end{itemize}
Let us define a $0$-Cantor set $L = L_1 \sqcup L_2$ analogously, with $C_2$ instead of $C_1$ and $N$ instead of $M$. 
Note that $\tau(K) = M$ and $\tau(L) = N$. 
Let $U = \left( K^{R}_{1}, K^{L}_{2} \right)$ and $S = \left( L^{R}_{1}, L^{L}_{2} \right)$. 
By (\ref{oi}), we have 
\begin{equation*}
|\widetilde{K_2}| + |\widetilde{U}| = |\widetilde{L_2}| + |\widetilde{S}|,  \text{ and  }  \
\widetilde{K_-}^{R} + \widetilde{L_-}^{R} = \widetilde{U}^{L} + \widetilde{S}^{L}. 
\end{equation*}
Also, (\ref{io}) implies 
\begin{equation*}
|\widetilde{K_2}| < |\widetilde{S}|,  \text{ and } \ |\widetilde{L_2}| < |\widetilde{U}|. 
\end{equation*}
Therefore, 
\begin{equation*}
( \widetilde{K_+} + \widetilde{L_+} ) \cup ( \widetilde{K_-} + \widetilde{L_-} ) 
= (-\infty, \widetilde{U}^L + \widetilde{S}^L] \sqcup [\widetilde{K_2} + \widetilde{L_2}]. 
\end{equation*}
\end{proof}

Next, let us show Theorem \ref{43215}. We need the following definition and lemma:

\begin{dfn}
Let $K$ be a $0$-Cantor set, and let $C, M$ be positive numbers. If 
\begin{equation*}
\begin{aligned}
(1)& \ \ \text{$K_{0}$ is a Cantor set whose convex hull is } \left[ \frac{1 + C}{1 + C + M}, 1 \right]; \\
(2)& \ \ \tau(K_{0}) > \max \left\{ 1, \, M,  \, C + \sqrt{ C (1 + C + M) } \right\}; \\
(3)& \ \ K_{+} = \bigsqcup_{n=0}^{\infty} \left( \frac{C}{1 + C + M} \right)^{n} K_{0}; \\
(4)& \ \ K_{-} = \sqrt{ \frac{C}{ 1 + C + M } } \, K_{+};
\end{aligned}
\end{equation*}
we call $K$ a \emph{$(C, M)$-Cantor set}.
Note that 
\begin{equation*}
\widetilde{K_+} = \bigsqcup_{n=0}^{\infty} \left( \widetilde{K_0} - nd \right),  \text{ and } \ 
\widetilde{K_-} = \bigsqcup_{n=0}^{\infty} \left( \widetilde{K_0} - \left( n + \frac{1}{2} \right) d  \right), 
\end{equation*}
where 
\begin{equation*}
d =  \log \left( 1 + \frac{1 + M}{C} \right). 
\end{equation*}
See Figure \ref{hihihihihi}.
\end{dfn}

\begin{centering}
\begin{figure}[t]
\includegraphics[scale=1.00]{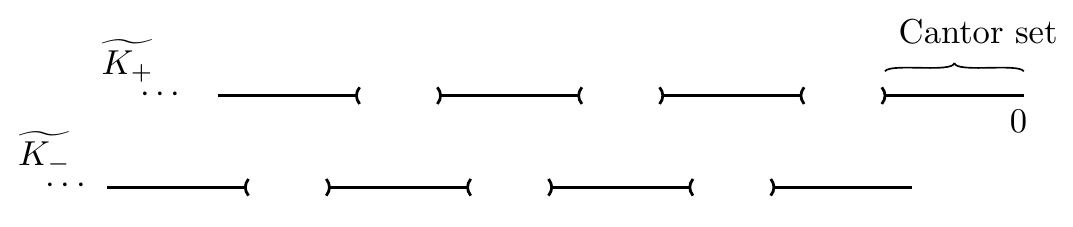}
\caption{$K$ is a $(C, M)$-Cantor set (line segments represent Cantor sets)}
\label{hihihihihi}
\end{figure}
\end{centering}

\begin{lem}\label{chain}
Let $C, M > 0$ be real numbers and let 
$K$ be a $(C, M)$-Cantor set. We have  
\begin{equation*}
\begin{aligned}
(1)& \ \ \text{if } \ C \geq \frac{M^{2}}{3M + 1}, \text{ \ then \ } \tau(K) = M;   \\
(2)& \ \ \text{if } \ C \leq \frac{M^2}{3M + 1},  \text{ \ then \ }  \tau(K) = C + \sqrt{ C (1 + C + M) }.
\end{aligned}
\end{equation*}
\end{lem}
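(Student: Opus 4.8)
The plan is to compute $\tau(K)$ directly from Definition \ref{thickness_def} by locating the pairs of gaps that realize the infimum, and then to read off the two cases from an elementary algebraic comparison. Set $r = \frac{C}{1+C+M}$ and $a = \frac{1+C}{1+C+M}$, so that the ``chain gaps'' of $K_+$ are the intervals $G_n = (r^{n+1}, r^n a)$ of length $|G_n| = \frac{r^n}{1+C+M}$, while the chain gaps of the negative part $-K_- = -\sqrt r\,K_+$ are the reflected intervals $-\sqrt r\,G_n$. First I would argue that, since condition (2) in the definition of a $(C,M)$-Cantor set forces $\tau(K_0) > \max\{M,\ C + \sqrt{C(1+C+M)}\}$, no pair of gaps that includes an internal gap of some copy $r^nK_0$ (or that lies entirely inside one copy, or inside $-K_-$ viewed as a scaled copy of $K_+$) can produce a ratio below these two values. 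Hence the infimum is attained among pairs of chain gaps together with the two end pairs involving the unbounded gaps $(-\infty, K^L)$ and $(K^R, \infty)$.

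Next I would compute the ``same-side'' value. For the consecutive positive chain gaps $G_{n+1} < G_n$ the bridge is $G_n^L - G_{n+1}^R = r^{n+1}(1-a) = r^{n+1}\frac{M}{1+C+M}$, and dividing by the smaller gap $|G_{n+1}| = \frac{r^{n+1}}{1+C+M}$ gives exactly $M$, while dividing by $|G_n|$ gives $rM < M$; non-consecutive pairs and the two end pairs only increase this, and by the similarity $-K_- = -\sqrt r\,K_+$ the negative side contributes the same. Thus same-side pairs realize precisely the value $M$.

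For the crossing pairs I would take $U_1 = -\sqrt r\,G_m$ and $U_2 = G_n$, so that the bridge is $B = r^{n+1} + \sqrt r\, r^{m+1}$. Scaling the whole picture by $r$ sends $(m,n) \mapsto (m+1, n+1)$ and leaves the ratio invariant, so the quantity depends only on $k = m-n$; writing it out gives $\max\big\{ (1+C+M)(r^{1/2-k}+r),\ (1+C+M)(r+r^{k+3/2})\big\}$. The two gaps balance when $|U_1| = |U_2|$, i.e.\ at the half-integer $k = -\tfrac12$ (the half-shift coming from the factor $\sqrt r$ between the two chains), so the integer minimum is attained at both $k=0$ and $k=-1$, with common value
\begin{equation*}
(1+C+M)(\sqrt r + r) = \sqrt{C(1+C+M)} + C .
\end{equation*}

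Combining the two computations yields $\tau(K) = \min\{M,\ C + \sqrt{C(1+C+M)}\}$, and the lemma follows from the equivalence $M \le C + \sqrt{C(1+C+M)} \iff C \ge \frac{M^2}{3M+1}$ (trivial when $M \le C$, and obtained by squaring $M - C \le \sqrt{C(1+C+M)}$ when $M > C$). I expect the main obstacle to be the lower bound rather than these explicit evaluations: one must rule out that some cleverly chosen pair — in particular one pairing an internal gap of a $K_0$-copy with a distant chain gap, or a crossing pair built from internal gaps at mismatched scales — beats $\min\{M, C+\sqrt{C(1+C+M)}\}$. This is exactly where the lower bounds packaged into $\tau(K_0) > \max\{1, M, C+\sqrt{C(1+C+M)}\}$ are consumed, and making that exclusion fully rigorous is the delicate part of the argument.
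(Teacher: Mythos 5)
Your proposal is correct and follows essentially the same route as the paper: the paper's (very terse) proof just evaluates the two bridge-to-gap ratios at the largest gap $U=\left(\frac{C}{1+C+M},\frac{1+C}{1+C+M}\right)$, namely $\frac{K^R-U^R}{|U|}=M$ and $\frac{U^L-K^L}{|U|}=C+\sqrt{C(1+C+M)}$, and finishes with the same equivalence $C+\sqrt{C(1+C+M)}\geq M \iff C\geq \frac{M^2}{3M+1}$. Your enumeration of same-side and crossing chain-gap pairs, together with the exclusion of internal gaps via the largeness of $\tau(K_0)$, is precisely the unwritten justification behind the paper's one-line formula, so your version is if anything more complete.
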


\begin{proof}
Write $U = \left( \frac{C}{1 + C + M}, \frac{1 + C}{1 + C + M} \right)$. 
By the definition of $(C, M)$-Cantor set, we have 
\begin{equation*}
\begin{aligned}
\tau(K) &= \min \left\{ \frac{K^R - U^R}{|U|}, \frac{U^L - K^L}{|U|} \right\} \\
&= \left\{ M, \, C + \sqrt{C( 1 + C + M )} \right\}.  
\end{aligned}
\end{equation*}
By a simple computation, we get 
\begin{equation*}
C + \sqrt{C( 1 + C + M )} \geq M \Longleftrightarrow C \geq \frac{M^{2}}{ 3M + 1 }.
\end{equation*}
The result follows from this. 
\end{proof}

\begin{proof}[proof of theorem \ref{43215}]
First, let us assume that 
\begin{equation}\label{ehe}
M \geq N, \text{ and } \ N < \frac{(2M+1)^2}{M^3}. 
\end{equation}
Let 
\begin{equation*}
C_{1} = \frac{M^2}{1 + 3M}, \text{ and } \ C_{2} = \frac{M^2}{(1 + M)( 1 + 3M)} (1 + N).
\end{equation*}
Then, we have 
\begin{equation*}
1 + \frac{1 + M}{C_{1}} = 1 + \frac{1 + N}{C_{2}}. 
\end{equation*}
Also, it is easy to see that (\ref{ehe}) implies 
\begin{equation*}
C_{2} \geq \frac{N^2}{3 N + 1}, \text{ and } \ 1 + \frac{1}{C_{1}} > 1 + \frac{N}{1 + C_{2}}. 
\end{equation*}
Let $K$, $L$ be a $(C_{1}, M)$-Cantor set and a $(C_{2}, N)$-Cantor set, respectively.  
By Lemma \ref{chain} we have $\tau(K) = M \text{ and } \tau(L) = N$. 
It is easy to see that $K \cdot L$ is a disjoint union of $\{0\}$ and countably many closed intervals. 

Next, let us assume that 
\begin{equation*}
M \geq N, \text{ and } \ M < \frac{N^2 + 3N + 1}{N^2}. 
\end{equation*}
Write 
\begin{equation*}
C'_{1} = \frac{MN}{3 N + 1}, \text{ and } \ C'_{2} = \frac{N^2}{3N + 1}. 
\end{equation*}
Let $K'$ be a $\left( C'_1, M + \frac{M}{N} - 1 \right)$-Cantor set and $L'$ be a $(C'_2, N)$-Cantor set. Then, 
by arguing analogously,  $\tau(K') = M$, $\tau(L') = N$, and 
$K' \cdot L'$ is a disjoint union of $\{0\}$ and countably many closed intervals.
\end{proof}

\begin{rem}\label{remark}
It is immediate from the construction above that if the condition (\ref{intersection}) is satisfied, there exist 
$0$-Cantor sets $K$ and $L$, such that 
\begin{itemize}
\item[(1)] $\tau(K) = M, \ \tau(L) = N$; 
\item[(2)] neither $K$ nor $L$ lies in a complementary domain of the other; 
\item[(3)] $K \cap L$ consists of exactly one element.
\end{itemize}
In fact, \cite{Hunt} and \cite{Kraft} independently showed that the condition (\ref{intersection}) is the optimal estimate 
that guarantees the existence of such $K$ and $L$. See section \ref{65743}. 
\end{rem}

\section{Other cases}\label{5879}

In this section, we consider the cases that we have not yet discussed. 
The proofs are analogous, so we only state the results. Recall that a Cantor set $K$ is a 
$0^{+}_{-}$-Cantor set if $K_{+}, K_{-} \neq \phi$, $\inf K_+ = 0$, and $\inf K_- > 0$. 

\begin{thm}
Suppose that either of the following holds: 
\begin{itemize}
\item[(1)] $K$ is a $0^+$-Cantor set, and $L$ is a $0^+_-$-Cantor set; 
\item[(2)] $K$ is a $0^+$-Cantor set, and $L$ is a $0^{\times}$-Cantor set.
\end{itemize}
Then, if the condition (\ref{3654}) is satisfied, $K \cdot L$ is an interval. 
Furthermore, let $M, N > 0$ be real numbers that satisfy the condition (\ref{thm2}). 
Then, for any $k \geq 2$, there exist Cantor sets $K, L,$ such that $K$ and $L$ satisfy one of the conditions above, 
$\tau(K) = M$, $\tau(L) = N$, and 
$K \cdot L$ is a disjoint union of $k$ closed intervals. 
\end{thm}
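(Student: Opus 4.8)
The plan is to reduce the theorem to the case already handled in Theorems \ref{goddaro} and \ref{kamidaro} by working with $\log K_+$ and $\log L_+$ and then exploiting the extra negative part of $L$ as a buffer. Since $K$ is a $0^+$-Cantor set, $\widetilde{K_+}$ is a $\log$-$C$-nice extended Cantor set for an appropriate $C = \tau(K)\tau(L) - 1$, and Lemma \ref{nmb} gives a sequence $k_n \to -\infty$ with $\tau(\widetilde{K_+} \cap [k_n, \widetilde{K_+}^R])$ bounded below by $\log(1 + \tfrac{\tau(K)}{1+C}) / \log(1 + \tfrac{1}{C})$. The key algebraic identities $1 + \tfrac{\tau(L)}{1+C} = 1 + \tfrac{1}{\tau(K)}$ and $1 + \tfrac{1}{C} \le 1 + \tfrac{\tau(K)}{1+\tau(K)}$, exactly as in the proof of Theorem \ref{kamidaro}, guarantee under hypothesis (\ref{3654}) that the product of the relevant log-thicknesses is at least $1$, so Theorem \ref{sum_sum_sum} applies to $\widetilde{K_+} + \widetilde{L_+}$.

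The first step is therefore a verbatim repetition of the first half of Theorem \ref{goddaro}: split into the case where every gap of $L_+$ is $C$-nice (apply Lemma \ref{gegege} and Lemma \ref{nmb} directly) and the case where $L_+$ has a $C$-bad gap (use Lemma \ref{akb}(1) to bound the largest log-gap of $\widetilde{K_+}$ by $|\widetilde{V}|$ for the top $C$-nice Cantor set $V$ of $L_+$, then invoke Theorem \ref{sum_sum_sum}). This shows $\widetilde{K_+} + \widetilde{L_+}$ is a half line, hence $K_+ \cdot L_+ = \exp(\widetilde{K_+} + \widetilde{L_+})$ is an interval of the form $(0, r]$. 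The point is that in both configurations (1) and (2) the positive part $L_+$ behaves identically to that of a $0$-Cantor set, so the estimate governing $(K\cdot L)\cap (0,\infty)$ is unchanged from Theorem \ref{goddaro}.

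The genuinely new ingredient is showing that the negative part $K_+ \cdot L_-$ fills the interval down to $0$ and joins continuously with the positive part, so that $K \cdot L$ is a single interval containing $0$ rather than a union of intervals. Here I would use that $\inf K_+ = 0$ while $L_- > 0$: for configuration (1) we have $\inf L_- > 0$, and for (2) we have $0 \notin L$ so $L_-$ is bounded away from $0$, and in both cases $L_-$ is itself a Cantor set supported in $(0,\infty)$. The product $K_+ \cdot L_-$ therefore equals $\exp(\widetilde{K_+} + \widetilde{L_-})$, and the same thickness estimate (\ref{3654}) applied to $\widetilde{L_-}$ in place of $\widetilde{L_+}$ shows this is also an interval of the form $(0, r']$. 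Since both $K_+ \cdot L_+$ and $K_+ \cdot L_-$ accumulate at $0$ and the latter gives negative values while the former gives positive values, the set $K \cdot L = (-K_+ L_-) \cup \{0\} \cup (K_+ L_+)$ (using $0 \in K$) is a single interval. I expect this gluing at the origin to be the main obstacle: one must verify that no gap opens up between $0$ and the nearest accumulating interval on each side, which follows from $\inf K_+ = 0$ forcing $\widetilde{K_+}^R + \widetilde{L}^L$ to range over all sufficiently negative reals, but the bookkeeping with the largest-gap conditions of Theorem \ref{sum_sum_sum} on both the $L_+$ and $L_-$ factors must be checked carefully.

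For the optimality half, the construction mirrors part (2) of Theorem \ref{goddaro}: take the same extremal $0^+$-Cantor set $K$ (the middle-$\tfrac{1}{1+2M}$ type set with $\tau(K) = M$) and the same positive part $L_+$ built from scaled copies accumulating at $0$, tuned so $\widetilde{K_+} + \widetilde{L_+}$ breaks into $k$ intervals when $N < \tfrac{2M+1}{M^2}$. To realize configuration (1) I would attach a negative part $L_-$ with $\inf L_- > 0$ of sufficiently large thickness and size so that $K_+ \cdot L_-$ is a single interval not interfering with the gap structure of $K_+ \cdot L_+$; for configuration (2) the same $L_-$ construction works since one only needs $0 \notin L$. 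Choosing $L_-$ large ensures $\tau(L)$ is still determined by the positive part, so $\tau(L) = N$ is preserved, and the resulting $K \cdot L$ is a disjoint union of exactly $k$ closed intervals as claimed.
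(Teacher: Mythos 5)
The paper itself offers no written proof of this theorem --- it only says the proofs in Section \ref{5879} are analogous to those of Theorems \ref{kamidaro}--\ref{migotoda} --- and your overall strategy (pass to $\widetilde{K_+}+\widetilde{L_+}$ and $\widetilde{K_+}+\widetilde{L_-}$, run the Case 1/Case 2 dichotomy of Theorem \ref{kamidaro}, and glue the two resulting half-lines at the origin using the fact that $\widetilde{K_+}$ is unbounded below) is exactly the intended one. However, two of your intermediate claims are false as stated. First, in configuration (2) the set $L_+$ does \emph{not} ``behave identically to that of a $0$-Cantor set'': since $0\notin L$, both $L_+$ and $L_-$ are bounded away from $0$, so $\widetilde{L_\pm}$ are bounded Cantor sets and the largest-gap hypothesis of Theorem \ref{sum_sum_sum} (largest gap of $\widetilde{K_+}$ at most $|\widetilde{L_\pm}|$, and likewise for the bottom $\log$-$C$-nice piece of $\widetilde{L_\pm}$ adjacent to the gap of $L$ containing $0$) is a genuine new condition to verify, not bookkeeping. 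It does hold, but the correct tool is Lemma \ref{819819} rather than Lemma \ref{akb}: pairing the gap of $L$ containing $0$ with the relevant gaps of $L_\pm$ gives $|\widetilde{L_\pm}|\geq\log(1+\tau(L))$ for the bottom piece, and $\tau(L)\geq(2\tau(K)+1)/\tau(K)^2>1/\tau(K)$ shows this exceeds $\log(1+1/\tau(K))$, which bounds every gap of $\widetilde{K_+}$ because every gap of the $0^+$-Cantor set $K$ is $\tau(K)$-nice. Your proposal never isolates this estimate, and it is the only place where the hypothesis on $L$ enters differently from Theorem \ref{goddaro}. The same estimate is what you need for $L_-$ in configuration (1).

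Second, your optimality construction for configuration (2) cannot use ``the same positive part $L_+$ built from scaled copies accumulating at $0$'': if $L_+$ accumulates at $0$ then $0\in L$ (as $L$ is closed), contradicting the definition of a $0^{\times}$-Cantor set. To realize configuration (2) you must truncate the accumulation, replacing the bottom piece $L_1$ (whose convex hull is $[0,(\tfrac{C}{1+C+N})^{k-1}]$ in the proof of Theorem \ref{kamidaro}) by a thick Cantor set with convex hull $[\delta,(\tfrac{C}{1+C+N})^{k-1}]$ for small $\delta>0$ and with $|\widetilde{L_1}|$ large, so that $K\cdot L_1$ is still the full interval $[0,(\tfrac{C}{1+C+N})^{k-1}K^R]$. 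Since the theorem only asks for $K,L$ satisfying \emph{one} of the two configurations, your configuration-(1) construction (attach a negative part with $\inf L_->0$, large thickness, and $|L_-|/\inf L_-$ large enough that $\tau(L)=N$ is preserved and $-(K_+\cdot L_-)$ merges with the bottom interval of $K\cdot L_+\cup\{0\}$) already suffices for the second half, but the configuration-(2) claim as you wrote it is not correct.
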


\begin{thm}
Suppose that one of the following holds: 
\begin{itemize}
\item[(1)] $K$ is a $0$-Cantor set, and $L$ is a $0^{+}_-$ Cantor set; 
\item[(2)] $K$ is a $0$-Cantor set, and $L$ is a $0^{\times}$-Cantor set; 
\item[(3)] $K$ is a $0^{+}_-$-Cantor set, and $L$ is a $0^{\times}$-Cantor set; 
\item[(4)] $K$ and $L$ are both $0^{+}_-$-Cantor sets. 
\end{itemize}
Then, if the condition (\ref{46578}) is satisfied, $K \cdot L$ is an interval. 
Furthermore, let $M, N > 0$ be real numbers with condition (\ref{thm3}).
Then, there exist Cantor sets $K, L$, such that $K$ and $L$ satisfy 
one of the conditions above, 
$\tau(K) = M$, $\tau(L) = N$, and $K \cdot L$ is a disjoint union of two intervals.
\end{thm}

If $K$ and $L$ are both $0^\times$-Cantor sets, 
the best we can hope for is $K \cdot L$ to become 
a disjoint union of two intervals. 

\begin{thm}
Let $K, L$ be $0^{\times}$-Cantor sets. Then, if the condition (\ref{46578}) is satisfied, 
$K \cdot L$ is a disjoint union of two intervals. 
Furthermore, let $M, N > 0$ be real numbers with condition (\ref{thm3}). 
Then there exist $0^\times$-Cantor sets $K$ and $L$ such that $\tau(K) = M$, $\tau(L) = N$, and 
$K \cdot L$ is a disjoint union of three intervals.
\end{thm}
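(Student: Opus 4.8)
The plan is to reduce everything to the additive picture already developed for Theorem \ref{migotoda}, exploiting that a $0^{\times}$-Cantor set sits away from the origin. Since $0 \notin K, L$ we have $0 \notin K \cdot L$, and writing $K = K_+ \sqcup (-K_-)$, $L = L_+ \sqcup (-L_-)$ we obtain
\begin{equation*}
K \cdot L = \exp\!\left[ (\widetilde{K_+} + \widetilde{L_+}) \cup (\widetilde{K_-} + \widetilde{L_-}) \right] \ \sqcup\
\left( -\exp\!\left[ (\widetilde{K_+} + \widetilde{L_-}) \cup (\widetilde{K_-} + \widetilde{L_+}) \right] \right).
\end{equation*}
The first exponential lies in $(0,\infty)$ and the second in $(-\infty,0)$, and they are separated by a neighborhood of $0$. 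Hence it suffices to show that each of the two bracketed unions of sum-sets is a single bounded interval; this immediately gives that $K \cdot L$ is a disjoint union of two intervals.

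First I would treat the union $(\widetilde{K_+} + \widetilde{L_+}) \cup (\widetilde{K_-} + \widetilde{L_-})$. Because $K$ and $L$ are $0^{\times}$-Cantor sets we have $\inf K_{\pm} > 0$ and $\inf L_{\pm} > 0$, so $\widetilde{K_{\pm}}, \widetilde{L_{\pm}}$ are genuine bounded Cantor sets. I would then run the argument of Theorem \ref{migotoda}: take the $\log$-$C_{K,L}$-split gaps $\{U_n\}$ and split Cantor sets $\{V_n\}$ of $\widetilde{K_+}$, and the $\log$-$C_{L,K}$-split gaps $\{S_n\}$ and split Cantor sets $\{T_n\}$ of $\widetilde{L_+}$. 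By Lemma \ref{123} each $V_i + T_j$ is an interval, and by Lemmas \ref{819} and \ref{128} the $\log$-$C$-covers $X_i$ (taken from $\widetilde{K_-}$) and $Y_j$ (taken from $\widetilde{L_-}$) satisfy $X_i + Y_j \supset U_i + S_j$, with $X_i + Y_j$ again an interval by Lemma \ref{123}. Since the relevant sets are bounded, these overlapping intervals cover the whole range from $\min\{\widetilde{K_+}^L + \widetilde{L_+}^L,\, \widetilde{K_-}^L + \widetilde{L_-}^L\}$ up to $\max\{\widetilde{K_+}^R + \widetilde{L_+}^R,\, \widetilde{K_-}^R + \widetilde{L_-}^R\}$, so their union is a single bounded closed interval. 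The union $(\widetilde{K_+} + \widetilde{L_-}) \cup (\widetilde{K_-} + \widetilde{L_+})$ is handled identically after interchanging the roles of $\widetilde{L_+}$ and $\widetilde{L_-}$, completing the first assertion.

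For optimality I would adapt the extremal construction from the proof of Theorem \ref{migotoda}. Given $M, N$ with $2(M+1)(N+1) > (MN-1)^2$, I would build two-block $0^{\times}$-Cantor sets $K = K_1 \sqcup K_2$ and $L = L_1 \sqcup L_2$, positioned so that $0 \notin K, L$ while retaining the block/gap ratios that force $\tau(K) = M$, $\tau(L) = N$; the parameters $C_1, C_2$ and the size identities (\ref{oi}) and (\ref{io}) are exactly the ones used there. The key point is that, unlike the $0$-Cantor set case, the two near-origin pieces no longer merge through $\{0\}$. By the failure of (\ref{thm3}) the positive factor $(\widetilde{K_+} + \widetilde{L_+}) \cup (\widetilde{K_-} + \widetilde{L_-})$ again splits into two bounded pieces $[\,\cdot\,,\, \widetilde{U}^{L} + \widetilde{S}^{L}] \sqcup [\widetilde{K_2} + \widetilde{L_2}]$, where the separated piece comes from the product $K_2 \cdot L_2$ of the two large positive blocks; the negative factor $(\widetilde{K_+} + \widetilde{L_-}) \cup (\widetilde{K_-} + \widetilde{L_+})$ has no analogue of this separated piece, since no product of two large positive blocks occurs among the negative products, and hence remains a single interval. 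Exponentiating and using $0 \notin K \cdot L$ then yields exactly three disjoint intervals.

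The main obstacle I expect is the bookkeeping of the interval count. In the bounded setting both sign-combination unions must be shown to collapse to a single interval, so I must verify that the covers $X_i + Y_j$ genuinely bridge every bad gap all the way to the endpoints, rather than merely up to some interior accumulation point; and in the extremal construction I must confirm that the tuned sizes produce the separated component in precisely one of the two unions. This amounts to tracing the chain of size inequalities from Lemma \ref{819} together with (\ref{oi}) and (\ref{io}) with the correct orientation of the positive and negative parts, which is routine but must be carried out with care.
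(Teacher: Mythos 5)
The paper gives no proof of this theorem -- Section \ref{5879} only states it, remarking that ``the proofs are analogous'' -- and your argument (splitting $K\cdot L$ into the positive part $\exp[(\widetilde{K_+}+\widetilde{L_+})\cup(\widetilde{K_-}+\widetilde{L_-})]$ and the negative part $-\exp[(\widetilde{K_+}+\widetilde{L_-})\cup(\widetilde{K_-}+\widetilde{L_+})]$, running the split-gap/cover machinery of Theorem \ref{migotoda} on each, and puncturing the extremal two-block construction at the origin so that the separated component $K_2\cdot L_2$ survives only among the positive products) is exactly the intended analogue and is sound. The one genuinely new point hiding in ``analogous,'' which falls under the bookkeeping you flagged, is that in the bounded setting the convex hulls of $\widetilde{K_+}+\widetilde{L_+}$ and $\widetilde{K_-}+\widetilde{L_-}$ must actually overlap (trivial in the $0$-Cantor case where both are half-lines); this follows from applying the thickness inequality to the gap of $K$ containing $0$ paired with each semi-infinite gap, which yields $\widetilde{K_-}^R>\widetilde{K_+}^L+\log\tau(K)$ and $\widetilde{K_+}^R>\widetilde{K_-}^L+\log\tau(K)$, and condition (\ref{46578}) forces $\tau(K)\tau(L)>1$.
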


So far, we have not considered the case that $\min K > 0$ or $\min L > 0$. 
In fact, this turns out to be very simple. We have the following:
\begin{thm}
For any real numbers $M, N > 0$,  
there exist a Cantor set $K$ and a $0^+$-Cantor set $L$ such that $\min K > 0$, $\tau(K) = M$, $\tau(L) = N$, 
and $K \cdot L$ is a disjoint union of $\{0\}$ and countably 
many closed intervals. 
\end{thm}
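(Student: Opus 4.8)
The plan is to mirror the self-similar ``chain'' constructions used in the proofs of Theorem \ref{kamidaro} and Theorem \ref{43215}, exploiting the fact that the hypothesis $\min K > 0$ removes any interaction between the two thicknesses. Since $K$ is bounded away from $0$, $\widetilde{K}$ is an ordinary (bounded) Cantor set, so only $\widetilde{L_+}$ is stretched to $-\infty$, and we will be free to make the period of that stretching as large as we like. There is therefore no product condition on $\tau(K)\cdot\tau(L)$ to satisfy, which is exactly why this case is simple.

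\textbf{Construction.} First I would take $K$ to be an affine copy of the middle-$\frac{1}{1+2M}$ Cantor set with convex hull $[1, 1+\eta]$ for a small $\eta > 0$; this gives $\min K = 1 > 0$, $\tau(K) = M$ (thickness being affine-invariant), and $|\widetilde{K}| = \log(1+\eta)$ small. Then I would define the $0^+$-Cantor set
$$L = \{0\} \sqcup \bigsqcup_{n=0}^{\infty} \lambda^{n} L_{0},$$
where $L_{0}$ is a Cantor set of sufficiently large thickness with convex hull $[c, 1]$, and where $\tfrac{1}{N+1} < c < 1$ and $\lambda = \frac{c(N+1)-1}{N} \in (0, c)$. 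A direct computation with the ``main'' gaps $\left(\lambda^{n+1}, \lambda^{n} c\right)$ shows that, once $\tau(L_{0})$ is large enough that the internal gaps of the copies are negligible, $\tau(L) = \frac{1-c}{c-\lambda} = N$, exactly as in the construction inside the proof of Theorem \ref{kamidaro}.

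\textbf{Reduction and conclusion.} Writing $d = \log(1/\lambda)$, self-similarity gives $\widetilde{L_+} = \bigcup_{n \geq 0}\bigl(\widetilde{L_0} - nd\bigr)$, hence
$$\widetilde{K} + \widetilde{L_+} = \bigcup_{n \geq 0}\bigl( (\widetilde{K} + \widetilde{L_0}) - nd \bigr).$$
Here $\widetilde{K} + \widetilde{L_0}$ is a single bounded interval: by Corollary \ref{so_easy} and Lemma \ref{1248} its two factors have thickness comparable to $M$ and to the large $\tau(L_0)$, so $\tau(\widetilde{K})\cdot\tau(\widetilde{L_0}) \geq 1$, and with $\tau(L_0)$ large the gap/diameter hypotheses of Theorem \ref{sum_sum_sum} hold. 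The translates in the union are pairwise disjoint provided $|\widetilde{K}| + \log(1/c) < d$, in which case exponentiating shows that $K \cdot L = \{0\} \sqcup \bigsqcup_{n\geq 0}\lambda^{n}(K \cdot L_0)$ is the disjoint union of $\{0\}$ and countably many closed intervals accumulating to $0$.

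\textbf{Main obstacle.} The only point requiring care is reconciling the prescribed value $\tau(L) = N$ with the separation condition $|\widetilde{K}| + \log(1/c) < \log(1/\lambda)$, since fixing $N$ ties $c$ and $\lambda$ together through $\lambda = \frac{c(N+1)-1}{N}$. The key observation is that as $c \downarrow \tfrac{1}{N+1}$ one has $\lambda \downarrow 0$, so $d = \log(1/\lambda) \to \infty$ while $\log(1/c) \to \log(N+1)$ stays bounded and $|\widetilde{K}|$ is already small. Hence the separation condition can be met for every $M, N > 0$, which completes the argument.
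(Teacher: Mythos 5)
Your overall strategy is sound and essentially dual to the paper's (the paper puts the very thick pieces into $K$, taking $K=K_1\sqcup K_2\subset[1,1+\epsilon]$ with $\tau(K_i)$ huge and the middle gap calibrated so that $\tau(K)=M$, and takes $L$ to be the middle-$\frac{1}{1+2N}$ Cantor set; you put the thick block $L_0$ into $L$ and take $K$ to be a middle-$\alpha$ set), but your thickness computation for $L$ is wrong in exactly the parameter regime you end up using. For $L=\{0\}\sqcup\bigsqcup_{n\ge 0}\lambda^{n}L_{0}$ with $L_{0}\subset[c,1]$, the main gap $U_{n}=(\lambda^{n+1},\lambda^{n}c)$ has, besides the bridge $[\lambda^{n}c,\lambda^{n}]$ on its right (which gives your ratio $\frac{1-c}{c-\lambda}=N$), the bridge $[0,\lambda^{n+1}]$ on its left: every gap below $U_{n}$ is strictly smaller than $U_{n}$, so in Definition \ref{thickness_def} the pair consisting of the unbounded gap $(-\infty,0)$ and $U_{n}$ contributes
\[
\max\left\{\frac{U_{n}^{L}-0}{|(-\infty,0)|},\ \frac{U_{n}^{L}-0}{|U_{n}|}\right\}=\frac{\lambda^{n+1}}{\lambda^{n}(c-\lambda)}=\frac{\lambda}{c-\lambda}.
\]
Hence $\tau(L)=\min\bigl\{\frac{1-c}{c-\lambda},\frac{\lambda}{c-\lambda}\bigr\}$ (up to the negligible internal gaps of the copies of $L_{0}$), and $\frac{\lambda}{c-\lambda}\ge N$ forces $\lambda\ge 1-c$, i.e.\ $c\ge\frac{N+1}{2N+1}$. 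Your resolution of the ``main obstacle'' sends $c\downarrow\frac{1}{N+1}$, where $\lambda\downarrow 0$ and therefore $\tau(L)=\frac{\lambda}{c-\lambda}\to 0$, not $N$. This left-bridge constraint is precisely what the paper guards against with the condition $C\ge N$ in the construction inside the proof of Theorem \ref{kamidaro}, which you cite but do not reproduce.

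The good news is that the obstacle you were trying to circumvent does not exist: the separation condition $|\widetilde{K}|+\log(1/c)<\log(1/\lambda)$ is equivalent to $\lambda(1+\eta)<c$, and $\lambda<c$ holds for every admissible $c<1$, so it suffices to shrink $\eta$ --- there is no need to make $d$ large. Taking for instance $c=\frac{N+1}{2N+1}$ and $\lambda=\frac{N}{2N+1}$ (so $L$ is modelled on the middle-$\frac{1}{2N+1}$ Cantor set, which is exactly the $L$ the paper uses) makes both bridge ratios equal to $N$, and the rest of your argument goes through. With that one correction your proof is a valid variant of the paper's: you get each $\lambda^{n}(K\cdot L_{0})$ as a single interval via Theorem \ref{sum_sum_sum}, whereas the paper, whose $K$ consists of two thick blocks, only needs each $K\cdot L_{0}$ to be a finite union of closed intervals (Lemma \ref{product}); either way the product is $\{0\}$ together with countably many disjoint closed intervals accumulating at $0$.
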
 

\begin{proof}[Outline of the proof]
Let us take sufficiently small $\epsilon > 0$. Let $K$ be a Cantor set such that 
\begin{equation*}
\begin{aligned}
(1)& \ K \subset [1, 1 + \epsilon] \text{ and } \tau(K) = M;  \\
(2)& \ K = K_1 \sqcup K_2, \text{ where } K_1, K_2 \text{ are Cantor sets with sufficiently large thickness}. 
\end{aligned}
\end{equation*}
Let $L$ be the middle $\frac{1}{1 + 2N}$-Cantor set whose convex hull is $[0, 1]$. It is easy to see that $K \cdot L$ is 
the disjoint union of $\{0\}$ and countably many closed intervals. 
\end{proof}

\section{Connection with questions on intersections of two Cantor sets}\label{65743}
In this section, we discuss the connection between questions on 
products of two Cantor sets and questions on intersections of two Cantor sets. 
For any Cantor sets $K$ and $L$, if neither $K$ nor $L$ lies in a complementary domain of the other we say $K$ and $L$ are 
\emph{interleaved}. 
Williams showed the following in \cite{Williams}
(this result was later extended by \cite{Hunt} and \cite{Kraft}, independently): 

\begin{thm}[Theorem 1 of \cite{Williams}]\label{98}
Let $K, L$ be interleaved Cantor sets. Then if $\tau(K), \tau(L) \geq 1 + \sqrt{2}$, 
$K \cap L$ contains infinitely many elements. 
Furthermore, for any $M, N < 1 + \sqrt{2}$ there exist interleaved Cantor sets 
$K$ and $L$ such that $\tau(K) = M$, $\tau(L) = N$, and  
$K \cap L$ consists of exactly one element. 
\end{thm}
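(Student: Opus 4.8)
The plan is to connect Williams's theorem (Theorem~\ref{98}) to the product results already established in this paper, specifically Theorem~\ref{migotoda} and its optimality construction. The key observation is that the critical threshold $1 + \sqrt{2}$ appearing in Williams's theorem is \emph{exactly} the threshold $\tau(K) = \tau(L) \geq 1 + \sqrt{2}$ that appears in Theorem~\ref{migotoda} for $0$-Cantor sets whose product is an interval. This is not a coincidence: when $K$ and $L$ are $0$-Cantor sets, passing through the logarithm sends $\widetilde{K_+}$, $\widetilde{K_-}$, $\widetilde{L_+}$, $\widetilde{L_-}$ to extended Cantor sets stretching to $-\infty$, and the condition \eqref{46578} governs when the sumset of these logarithmic pieces fills a half-line. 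My plan is to exploit this correspondence in both directions.

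For the sufficiency direction (showing $\tau(K), \tau(L) \geq 1 + \sqrt{2}$ forces infinitely many intersection points), I would first translate the interleaving hypothesis. Given interleaved $K$ and $L$, one rescales and positions them so that the relevant accumulation structure near a common point becomes visible, and then uses the same machinery as in the proof of Theorem~\ref{migotoda}: the $\log$-$C_{K,L}$-split and $\log$-$C_{L,K}$-split decompositions, together with Lemma~\ref{123} and Lemma~\ref{128}, show that the logarithmic images of the two Cantor sets, when summed, cover a half-line and hence overlap in a way that produces not just one but infinitely many points of $K \cap L$. The analogue of condition \eqref{46578} with $\tau(K) = \tau(L) = 1 + \sqrt{2}$ is precisely the boundary case $2(\tau+1)^2 = (\tau^2 - 1)^2$, which simplifies to $\tau = 1 + \sqrt{2}$; I would verify this algebraic identity early, since it anchors the whole correspondence.

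For the optimality direction (constructing, for any $M, N < 1 + \sqrt{2}$, interleaved Cantor sets with exactly one intersection point), I would adapt the $(C,M)$-Cantor set construction from Definition preceding Lemma~\ref{chain} and the optimality argument inside the proof of Theorem~\ref{migotoda}. The condition $M, N < 1 + \sqrt{2}$ translates, via the equivalence established in Lemma~\ref{what}, into the strict inequality \eqref{thm3}, namely $2(M+1)(N+1) > (MN-1)^2$. Under this inequality the optimality construction in Theorem~\ref{migotoda} yields a product that is a disjoint union of two intervals, which geometrically corresponds to the sumset of logarithms \emph{failing} to cover a full half-line by exactly one gap; pulling this back, the two interleaved Cantor sets meet in exactly one point. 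I would build $K$ and $L$ as geometric chains (as in the $(C,M)$-Cantor set) with pieces of sufficiently large thickness, so that the thicknesses are exactly $M$ and $N$ by Lemma~\ref{chain}, and arrange the chain parameters so the single surviving contact is a one-point intersection.

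The main obstacle I expect is the careful bookkeeping of the translation between ``intersection of $K$ and $L$'' and ``the product $K \cdot L$ containing $0$ or an interval near $0$.'' Intersections and products are related but not identical notions, so I must set up the correspondence precisely---most naturally by noting that a one-point intersection of interleaved sets corresponds to the logarithmic sumsets having exactly one uncovered gap, mirroring the ``disjoint union of two intervals'' conclusion of Theorem~\ref{migotoda}. Making this dictionary rigorous, rather than merely suggestive, and confirming that the ``exactly one element'' (not two, not infinitely many) count survives the construction, is where the real work lies; the thickness computations themselves are routine given Lemma~\ref{chain} and Lemma~\ref{what}.
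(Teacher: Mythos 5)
Your identification of the threshold is correct ($2(\tau+1)^2 \le (\tau^2-1)^2$ holds if and only if $\tau \ge 1+\sqrt{2}$, so the constant in Theorem~\ref{98} is exactly the diagonal case of \eqref{46578}), and reusing the $\log$-$C$-split machinery is indeed the paper's strategy. But the logical engine you propose for the first half is a non sequitur: the statement that $\widetilde{K_+}+\widetilde{L_+}$ covers a half-line is a statement about the product $K\cdot L$, and it carries no information about $K\cap L$ --- two disjoint Cantor sets can have a sumset equal to an interval, so ``the sumsets cover a half-line and hence overlap'' does not follow. Accordingly, Lemma~\ref{123} and Lemma~\ref{128} are not the right tools here, and the paper does not use them. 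Its actual proof applies the Gap Lemma directly to \emph{intersections} of the pieces: after translating so that $0\in K\cap L$, one forms the $\log$-$C$-split Cantor sets $V_n$, $T_m$ and the covers $X_n$, $Y_m$ with $C=1/\sqrt{2}$; Lemma~\ref{akb} gives each piece thickness at least $\log\bigl(1+\tfrac{\tau}{1+C}\bigr)/\log\bigl(1+\tfrac{1}{C}\bigr)=1$ when $\tau=1+\sqrt{2}$, so any two pieces whose convex hulls meet must themselves intersect by the Gap Lemma, and since the pieces accumulate at $-\infty$ (i.e.\ at $0$ upstairs) this produces infinitely many distinct points of $K\cap L$. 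Replacing ``the sum of the pieces is an interval'' by ``the pieces intersect'' is precisely the missing idea; without it the sufficiency direction does not go through.

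The optimality direction has the same defect. The two-interval construction from Theorem~\ref{migotoda} cannot yield a one-point intersection: there $K_1$ and $L_1$ are interleaved Cantor sets of arbitrarily large thickness both containing $0$, so by the Gap Lemma (indeed by Williams's own theorem) $K\cap L$ already contains far more than one point. The construction the paper actually invokes is the one from Theorem~\ref{43215} and Remark~\ref{remark}: the $(C,M)$- and $(C,N)$-Cantor chains of Lemma~\ref{chain}, arranged so that apart from the common point $0$ the pieces of $K$ and $L$ stay disjoint, which forces $K\cap L=\{0\}$. The relevant hypothesis there is condition \eqref{intersection}, which $M,N<1+\sqrt{2}$ does imply (since $1+3/N+1/N^2$ is decreasing and equals $1+\sqrt{2}$ at $N=1+\sqrt{2}$), not condition \eqref{thm3}; moreover your claimed equivalence of $M,N<1+\sqrt{2}$ with \eqref{thm3} is false ($M=10$, $N=1/2$ satisfies \eqref{thm3}). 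More fundamentally, the ``dictionary'' you propose --- one uncovered gap in the logarithmic sumset corresponds to a one-point intersection --- does not exist: products and intersections are governed by the same decomposition into nice pieces and covers, but by two different applications of the Gap Lemma, and the proof must be run on the intersection side.
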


\begin{rem}
In fact, Williams showed much more. For example, he also showed that if 
$\tau(K),  \tau(L) > 1 + \sqrt{2}$, 
$K \cap L$ contains a Cantor set. We are not sure whether our method can be applied to prove this statement. 
\end{rem}
To illustrate the connection, we present a completely different proof of Theorem \ref{98} using our method.  
See also Remark \ref{remark}. 

\begin{proof}[outline of the proof of Theorem \ref{98}]
For the sake of simplicity, we only consider the case $\tau(K) = \tau(L) = 1 + \sqrt{2}$. 
Write $C = \frac{1}{\sqrt{2}}$. 

By the Gap Lemma, $K \cap L \neq \phi$. Translating $K$ and $L$ if necessary, 
we can assume that $0 \in K \cap L$. 
Let us only consider the case that $K$ and $L$ are both $0$-Cantor sets. 

Let $\{ U_{n} \} \ (n = 0, 1, \cdots)$ and $\{ V_{n}\} \ (n = 0, 1, \cdots)$ 
be the $\log$-$C$-split gaps of $\widetilde{K_+}$ and the $\log$-$C$-split Cantor sets of $\widetilde{K_+}$, respectively. 
Let $X_{n} \ (n = 1, 2, \cdots)$ be $\log$-$C$-covers of $U_{n} \ (n = 1, 2, \cdots)$. 
Similarly, let $\{ S_{n} \}$ and $\{ T_{n}\}$ 
be the $\log$-$C$-split gaps of $\widetilde{L_{+}}$ and the $\log$-$C$-split Cantor sets of $\widetilde{L_+}$, respectively. 
Let $Y_{n}$ be a $\log$-$C$-cover of $S_{n}$. For the sake of simplicity, we assume that both splits are infinite.
By Lemma \ref{akb} and the Gap Lemma, we have 
\begin{equation*}
\begin{aligned}
(1)& \ \text{ for all $n \in \mathbb{N},$ } |X_n| \geq |U_n| \text{ and }  |Y_n| \geq |S_n|;  \\
(2)& \ \text{ for all $n, m \in \mathbb{N}$}, \ V_n \cap T_m \neq \phi \text{ \ if \  } 
\mathrm{con}(V_n) \cap \mathrm{con}(T_m) \neq \phi; \\
(3)& \ \text{ for all $n, m \in \mathbb{N}$}, \ X_n \cap Y_m \neq \phi \text{ \ if \  } \mathrm{con}(X_n) \cap \mathrm{con}(Y_m) \neq \phi; 
\end{aligned}
\end{equation*}
where $\mathrm{con}(A)$ is the convex hull of a set $A$. 
The claim follows from this.  
\end{proof}

\section{Open problems}\label{47325}
In this section, we state a few questions and open problems that are suggested by the results of this paper.
\begin{enumerate}
\item In \cite{Astels}, the author generalized Theorem \ref{sum_sum_sum} for sums of three or more Cantor sets. 
It is natural to try to extend their results to products of three or more Cantor sets. 
\item It is also natural to consider the product of middle $\alpha$-Cantor set and middle $\beta$-Cantor set instead of general Cantor sets. 
\item Similarly, we can consider products of two dynamically defined Cantor sets instead of general Cantor sets. 
In this case, we believe that the estimate in Theorem \ref{migotoda} will be different; but this question is 
beyond the scope of our paper.  
\end{enumerate}

\section*{Acknowledgements}
The author would like to acknowledge the invaluable contributions of Anton Gorodetski. 
The author would also like to thank the anonymous referees for many helpful suggestions and remarks.

\end{document}